\newtheorem{theorem}{Theorem}
\newtheorem{definition}{Definition}
\newtheorem{lemma}{Lemma}
\newtheorem{cor}{Corollary}
\newtheorem{proposition}{Proposition}
\newtheorem{remark}{Remark}
\newtheorem{property}{Property}
\newtheorem{example}{Example}
\def \beq{ \begin{equation}}
\def \eeq{\end{equation}}
\newcommand{\rank}{\mathrm{rank}}
\newcommand{\olre}{$0$\,-$LRE$}
\newcommand{{\oere}}{$0$\,-$ERE$}
\title{Continuations and bifurcations of relative equilibria for the positive curved three body problem}
\date{} 
\begin{document}
	\maketitle
	\author{\begin{center}
	{ Toshiaki~Fujiwara$^1$, Ernesto P\'{e}rez-Chavela$^2$}\\	
		\bigskip
	   $^1$College of Liberal Arts and Sciences, Kitasato University,       Japan. fujiwara@kitasato-u.ac.jp\\
	    $^2$Department of Mathematics, ITAM, M\'exico.\\ ernesto.perez@itam.mx
	\end{center}
	
%\date{\today}

\bigskip

\begin{abstract}
The positive curved three body problem is a natural extension of the planar Newtonian three body problem to the sphere 
$\mathbb{S}^2$. In this paper we study the extensions of the Euler and Lagrange Relative equilibria ($RE$ in short) on the plane to the sphere.

The $RE$ on $\mathbb{S}^2$ are not isolated in general.
They usually have one-dimensional continuation in the three-dimensional shape space.
We show that there are two types of bifurcations. One is the bifurcations between 
Lagrange $RE$  and Euler $RE$. Another one is between the different types of the shapes of Lagrange $RE$. We prove that
bifurcations between equilateral and isosceles Lagrange $RE$  exist
for equal masses case, and that bifurcations between isosceles and scalene Lagrange $RE$  exist for partial equal masses case. 
\end{abstract}

{\bf Keywords} Relative equilibria, Euler configurations, Lagrange configurations,  cotangent potential.

{\bf Math. Subject Class 2020:} 70F07, 70F10, 70F15

%\tableofcontents

\section{Introduction}
A relative equilibrium for the Newtonian $n$--body problem, is a particular solution where the masses are rotating uniformly around their center of mass with the same angular velocity. In these kind of motions the masses preserve their mutual distances for all time, that is, they behave as a rigid body motion. The fixed configuration at any time is called a central configuration. In the corresponding rotating frame they form an equilibrium point, from here the name \cite{Wintner, Moeckel}.

Since the central configurations are invariant under rotations and homotheties, we count classes of central configurations modulo these Euclidean transformations; then for $n=3$, there are only 5 classes of relative equilibria, three collinear or Euler relative equilibria \cite{Euler} and two equilateral triangle or Lagrange relative equilibrium \cite{Lagrange}.

\medskip

The curved $n$--body problem is a natural extension of the classical Newtonian  problem to spaces of constant curvature $\kappa$ which could be positive or negative. For $n=2$, the problem is divided into two classes, the Kepler problem (one particle is fixed, and the other one is moving according to it) and the $2$--body problem (both masses are moving according to their mutual attractions). The first one is an old problem, introduced independently in the 1830's by J. Bolyai and N. Lovachevsky the codiscovers  of the first non-Euclidean geometries. 
The second one was introduced by Borisov et al \cite{Borisov1, Borisov2}. Unlike the Newtonian problem, on the sphere these problems are not equivalent, the first one is integrable and the second one is not \cite{Shchepetilov2}. 

In 1994, V.V. Kozlov showed that the cotangent potential is the natural way to extend the Newtonian potential to spaces of constant positive curvature \cite{Kozlov}.
In 2012, F. Diacu, E. Perez-Chavela and M. Santoprete \cite{Diacu-EPC1}, obtained the generalization of this problem in an unified way for any value of $n$ and any value of the constant curvature $\kappa$. You can see \cite{Diacu1} and \cite{Borisov2} for a nice historical description of this problem.

In this paper we are interested in the analysis of relative equilibria for the two dimensional positive curved three body problem, which can be reduced to the analysis on the unit sphere $\mathbb{S}^2 \subset \mathbb{R}^3$. That is, we will study relative equilibria for three positive masses moving on $\mathbb{S}^2$ under the influence of the cotangent potential. From here on, just to simply the notation we call to the relative equilibria simply as $RE$.

By exploiting the symmetries of some configurations and using spherical trigonometric arguments, several authors have found different families of $RE$ on the sphere $\mathbb{S}^2$ see for instance \cite{Diacu-EPC1, Diacu1, Diacu3, zhu2, EPC1, EPC2, tibboel, zhu}.

In a recent paper \cite{Fujiwara}, the authors of this article developed a new systematic geometrical method to study $RE$ on $\mathbb{S}^2$, where the masses are moving on the sphere under the influence of a potential which only depends on the mutual distances among the masses, in particular for the cotangent potential. 
For $n=3$ the authors divide the analysis of $RE$ on $\mathbb{S}^2$ in two big classes, the Euler relative equilibria ($ERE$ by short) where the three masses are on the same geodesic, and the Lagrange relative equilibria ($LRE$ by short), for the $RE$ which are not in the previous class. In the same paper \cite{Fujiwara}, the authors find the necessary and sufficient conditions on the shapes, to obtain $ERE$ and $LRE$. In this paper we will restrict our analysis to the cotangent potential, that is, to the positive curved three body problem. 

In \cite{Bengochea}, the authors proved that any $RE$ of the planar $n$--body problem can be continued to spaces of constant curvature $\kappa$, positive or negative for small values of the parameter $\kappa$. 
This is a remarkable result, because for instance, it is well known that any three masses located at the vertices of an equilateral triangle generate a $RE$ on the plane; however in the case of curved spaces, $LRE$ with equilateral triangle shape only exist if the three masses are equal. In particular they show that any Lagrange relative equilibria can be continued to the sphere (the equilateral  triangle shape, is not preserved in the continuation if the masses are not equal). 
 
In this paper we will show that in general, we can continue a $RE$ on the sphere, represented by an one dimensional curve.
When two continuations of $RE$ intersect, we call it a bifurcation, in the next section we will give the precise definition of these concepts.

After the introduction, the paper is organized as follows: In Section 
\ref{shapes} we give the definitions of all concepts used along the paper, and we show how use the implicit function theorem to find the bifurcation points and the extensions of solutions. In Section \ref{equal masses} we do the analysis for the case of equal masses and in Section \ref{partial equal masses} we do the analysis for the case of partial equal masses. In section \ref{general masses}, we study $LRE$ with general masses, and finally in Section \ref{conclusions} we summarize our results and state some final comments.

%%%%%%%%%%%%%%%%%%%%%%%%%%%%%%%%%%%%%%%%%%%%%%%%%%%%%%%%%%%%%

\section{Conditions for a Shape}\label{shapes}
We consider three positive masses denoted by $m_k$ moving on a sphere of radius $R$ that we denote as $\mathbb{S}^2$.
The equations of motions are described by the Lagrangian, which in spherical coordinates $(R, \theta_k,\phi_k)$ take the form,
\begin{equation}
L=R^2\sum_k \frac{m_k}{2} (\dot{\theta}_k^2+\dot{\phi}_k^2\sin^2(\theta_k))
	+\sum_{i,j}\frac{m_i m_j}{R}
	\frac{\cos\sigma_{ij}}{\sqrt{1-\cos^2\sigma_{ij}}}.
\end{equation}
To facilitate the notations, we set $R=1$ unless otherwise specified. We denote the angle of the minor arc on the great circle connecting the masses $i$ and $j$ as $\sigma_{ij}$. In order to avoid singularities, we exclude the case $\cos^2\sigma_{ij} = 1$, which corresponds to 
$\sigma_{ij} \neq 0, \pi.$

The above angles are related to each other by the relationship
\begin{equation}\label{basicRelation}
\cos\sigma_{ij}=\cos\theta_i\cos\theta_j+\sin\theta_i\sin\theta_j\cos(\phi_i-\phi_j).
\end{equation}
For the three-body problem, it is convenient to use the notation
$\sigma_k=\sigma_{ij}$
%.
for $(i,j,k)=(1,2,3),(2,3,1),$ and $(3,1,2)$.
We define
\begin{equation}\label{defOfU}
\begin{split}
U&=\{(\sigma_1,\sigma_2,\sigma_3)| 0<\sigma_k<\pi\},\\
U_\textrm{phys}&=\{(\sigma_1,\sigma_2,\sigma_3)\in U|
		\sigma_k\le \sigma_i+\sigma_j
		\mbox{ and }\sum_k \sigma_k\le 2\pi\}.
\end{split}
\end{equation}
The inequalities in $U_\textrm{phys}$
are the conditions of $\sigma_k$ to form a triangle.
Note that one point in $U_\textrm{phys}$
corresponds to two triangles with opposite orientation.

\begin{definition}\label{def:RE}
A relative equilibrium is a solution of the equations of motion, which in spherical coordinates satisfies 
$$\dot\theta_k=0 \quad \text{and} \quad \dot\phi_k=\omega \quad \text{ for all} \quad k=1,2,3.$$
That is, a solution that behaves as if the masses belonged to a rigid body, the shape is the same for all time.
\end{definition}

\begin{remark}
Usually, people working on {\it Geometric Mechanics} define the $RE$ as fixed points in a reduced system (see for instance \cite{Marsden} and the references therein). In other words, the $RE$ are solutions which are invariant under the action of a continuous symmetric group $G$. In our case the symmetric group is $G=SO(3)$. The $RE$ correspond to periodic orbits in the original phase space (the not reduced one), these periodic orbits are rotating uniformly around a principal axis. Then in order to determine a $RE$, we need to have the initial configuration and the angular velocity. If we express these conditions in the usual spherical coordinates, taking the rotation axis as the $z$--axis, we obtain Definition \ref{def:RE}. By the other hand, if Definition \ref{def:RE} holds, then since $G=SO(3)$, we obtain that a $RE$ is a fixed point in the reduced system. So, both definitions are equivalent.
\end{remark}

As we have seen in the previous Section,
there are two kinds a $RE$ on the sphere, $ERE$ and $LRE$.
In \cite{Fujiwara}, we proved that the $ERE$ must be on the equator or on a rotating meridian. 

The big difficulty to study $RE$ on the sphere is that the linear momentum and the center of mass are not more a first integral for the positive curved problem.

Fortunately in \cite{Fujiwara}, we found that the center of mass can be substitute by the vanishing of two components of the angular momentum on the sphere.

To verify this fact, we observe that the Lagrangian is invariant under rotations around the centre of $\mathbb{S}^2$, the angular momentum vector ${\bf c} = (c_x, c_y, c_z)$ is a first integral. The components of ${\bf c}$ are
\begin{align}
c_x&=R^2 \sum_k m_k \left(-\sin(\phi_k) \dot\theta_k
	-\sin(\theta_k)\cos(\theta_k)\cos(\phi_k) \dot\phi_k\right),
	\nonumber \\
c_y&=R^2 \sum_k m_k \left(\cos(\phi_k) \dot\theta_k
	-\sin(\theta_k)\cos(\theta_k)\sin(\phi_k) \dot\phi_k\right),
	\nonumber \\
c_z&=R^2 \sum_k m_k \sin^2(\theta_k) \dot \phi_k.
	\nonumber
\end{align}

Then, fixing the rotation axis as the $z$--axis, we obtain that the components $c_x=0$ and $c_y=0$ are integrals of motion.

Now by Definition \ref{def:RE},  after the substitution $\dot{\theta}_k=0$ and $\dot{\phi}_k(t) = \omega$, the angular momentum has the form 
${\bf c} = (c_x, c_y, c_z)=(0,0,c_z),$ where

\begin{align}\label{comp-am}
(c_x,c_y)&= -R^2 \omega \sum_k m_k \sin(\theta_k)\cos(\theta_k) \left(\cos(\phi_k),\sin(\phi_k)\right), \\
c_z&=R^2 \omega \sum_k m_k \sin^2(\theta_k) \nonumber. 
\end{align}

We observe that taking $r_k=R\theta_k$ finite, in 
the limit $R \to \infty$ we obtain $\theta_k \to 0$, since $\theta_k=r_k/R \to 0$ for $R\to \infty$.  Then, using equation \eqref{comp-am}, the expansion for $\cos \theta_k$ and $\sin \theta_k$ and dropping the higher order terms  we obtain
\begin{equation}\label{cmext}
c_x \to -R \omega \sum_k m_k r_k \cos(\phi_k)=0 \quad \text{and}
\quad  c_y  \to -R \omega \sum_k m_k r_k \sin(\phi_k)=0.
\end{equation}
Finally using the fact that $(c_x,c_y)=(0,0)$, the above equation \eqref{cmext}, is the condition to fix the center of mass at the origin 
on the Euclidean plane (see \cite{Fujiwara} for more details). 

\begin{remark}
In \cite{Borisov1} the authors use the reduction method to guarantee that the projections of the angular momentum of the system onto the original space are preserved. Then by using Noether's theorem, they obtain the expressions of the above projections in Euler's angles. Due to the more classical geometric viewpoint  that we discuss along this manuscript, we prefer to use our approach.
\end{remark}

Using the two integrals $c_x=0$ and $c_y=0$, we prove that the problem to find $RE$ on $\mathbb{S}^2$ is reduced to solve the eigenvalue problem $J\Psi_L = \lambda \Psi_L$ where $J$ is an useful representation of the inertia tensor given by (see \cite{Fujiwara} for more details) 

\begin{equation}\label{defJ}
J=\left(\begin{array}{ccc}
m_2+m_3 & -\sqrt{m_1m_2}\cos\sigma_{3} & -\sqrt{m_1m_3}\cos\sigma_{2} \\
-\sqrt{m_2m_1}\cos\sigma_{3} &m_3+m_1& -\sqrt{m_2m_3}\cos\sigma_{1} \\
-\sqrt{m_3m_1}\cos\sigma_{2} & -\sqrt{m_3m_2}\cos\sigma_{1} & m_1+m_2
\end{array}\right).
\end{equation}

In the same paper \cite{Fujiwara}, we show that the
necessary and sufficient conditions to have $LRE$ or $ERE$, 
can be described only in terms of $m_k$ and $\sigma_k$, $k=1,2,3$.
The conditions for a shape to form $LRE$ are
\begin{equation}\label{eqForLagrangeI}
\begin{split}
\lambda
&=\frac{(m_2+m_3)\sin^3(\sigma_1)
-m_2\cos(\sigma_3)\sin^3(\sigma_2)
-m_3\cos(\sigma_2)\sin^3(\sigma_3)}{\sin^3(\sigma_1)}
=\lambda_1\\
&=\frac{(m_3+m_1)\sin^3(\sigma_2)
-m_3\cos(\sigma_1)\sin^3(\sigma_3)
-m_1\cos(\sigma_3)\sin^3(\sigma_1)}{\sin^3(\sigma_2)}
=\lambda_2\\
&=\frac{(m_1+m_2)\sin^3(\sigma_3)
-m_1\cos(\sigma_2)\sin^3(\sigma_1)
-m_2\cos(\sigma_1)\sin^3(\sigma_2)}{\sin^3(\sigma_3)}
=\lambda_3.
\end{split}
\end{equation}

Let $\lambda_{ij}$ be the difference of $\lambda_i$ and $\lambda_j$,
namely
\begin{equation}
\lambda_{ij}=\lambda_i-\lambda_j.
\end{equation}
Then, the condition for have a $LRE$ is equivalent to 
$\lambda_{12}=\lambda_{23}=0$.

The correspondence between the solution $\sigma_k\in U_\textrm{phys}$
of this condition
and the configuration variables $\theta_k$ 
is given by
\begin{equation}\label{sigmaToTheta}
\cos\theta_k=s\sqrt{M-\lambda}\,\,\sin^3(\sigma_k)
	/\sqrt{\sum\nolimits_\ell m_\ell \sin^6(\sigma_\ell)}.
\end{equation}
Then using $\sigma_k$ and $\theta_k$,
equation (\ref{basicRelation}) determines
$\cos(\phi_i-\phi_j)$.
Where $M=\sum_\ell m_\ell$ is the total mass,
and $s=\pm 1$.
If we take $s=1$ the three masses are on the northern hemisphere,
when $s=-1$ they are on southern hemisphere.
Finally, the angular velocity is given by
\begin{equation}
\omega^2
=\frac{\sum_\ell m_\ell \sin^6(\sigma_\ell)}
{\sin^3(\sigma_1)\sin^3(\sigma_2)\sin^3(\sigma_3)}.
\end{equation}
Thus, the problem to find a configuration of $LRE$
is reduced to find the solution $\sigma_k$ of the condition
$\lambda_{12}=\lambda_{23}=0$ (See \cite{Fujiwara} for more details).

Similarly, the 
necessary and sufficient condition for a shape to form an $ERE$ 
on a rotating meridian with $\sigma_3=\sigma_1+\sigma_2$,
namely 
the mass $m_3$ is located between $m_1$ and $m_2$, is
\begin{equation}\label{eqForEREI}
\begin{split}
d=\frac{m_1 \sin(2\sigma_2)-m_2 \sin(2\sigma_1)}{\sin^2\sigma_3}
+&\frac{m_2 \sin(2\sigma_3)+m_3 \sin(2\sigma_2)}{\sin^2\sigma_1}\\
-&\frac{m_3 \sin(2\sigma_1)+m_1 \sin(2\sigma_3)}{\sin^2\sigma_2}
=0.
\end{split}
\end{equation}
You can consult
reference \cite{Fujiwara} for the correspondence
between $\sigma_k$ and 
the configuration variables $\theta_k$, $\omega^2$.

The conditions for a shape to form an $ERE$  on the equator 
$\sigma_1+\sigma_2+\sigma_3=2\pi$
are
\begin{equation}\label{ConditionEREonEquator}
\mu_k < \mu_i+\mu_j
\quad (i,j,k) \mbox{ is a permutation of }(1,2,3),
\end{equation}
where $\mu_k=\sqrt{m_im_j}$.
For this case, $\sigma_k$ is given by
\begin{equation}\label{SigmaForEREonEquator}
\sigma_k=\arccos\left(\frac{\mu_k^2-\mu_i^2-\mu_j^2}{2\mu_i\mu_j}\right).
\end{equation}
Note that there is just 
one set of $\sigma$'s 
(two shapes with different orientation) for given masses.
For the $LRE$ and the $ERE$ on a rotating meridian on the sphere, we will show that some of these $RE$ can be continued into the three dimensional space given by 
$(\sigma_1,\sigma_2,\sigma_3)$, 
and that these continuations can meet in this space.
We call this ``bifurcation'',
to be more precise we define:

\begin{definition}\label{defBifurcationPoint}
A bifurcation point of $LRE$ and $ERE$ on a rotating meridian is a point on a plane $\sigma_k=\sigma_i+\sigma_j$, where the continuation of a $LRE$ and an $ERE$ coincide. 
This bifurcation point is a coupling
where two continuations of $LRE$ with opposite orientation connected.
\end{definition}

We will show ahead in this paper, that some $LRE$ meets $ERE$ on the equator, and since an $ERE$ on the equator is just one point for given mass ratio and given orientation, we define:
\begin{definition}\label{defEulerJunction}
An Euler coupling on the Equator
is a point with $\sigma_1+\sigma_2+\sigma_3=2\pi$,
where two continuations of $LRE$ with the same orientation, one on the northern and the other one on the southern hemisphere, connected.
\end{definition}

Finally we have that the shapes of $LRE$ can be grouping into
equilateral, isosceles, and scalene triangles.
We will show that for equal masses case, $m_1=m_2=m_3$,
equilateral and isosceles $LRE$ have continuation.
When only two masses are equal, for instance $m_1=m_2\ne m_3$, we have continuation of isosceles and scalene $LRE$.
We define the bifurcation point
as follows.

\begin{definition}
Let the set $\{A,B\}$ be one of
\{equilateral, isosceles\} or
\{isosceles, scalene\} $LRE$.
A bifurcation point between $A$ and $B$ 
is the intersection point of the continuation of $A$  and $B$.
\end{definition}

The existence of bifurcation points between  
 $LRE$ one in the group $A$ and other in the group $B$
can be understood by two simple 
but important
properties of the surfaces defined by $\lambda_{ij}=0$
in $U_\textrm{phys}$.

\begin{property}
For $\sigma_i=\sigma_j$,
\begin{equation}
\lambda_{ij}=(m_i-m_j)(\cos\sigma_k-1).
\end{equation}
\end{property}

From this property we obtain the following proposition and corollaries whose proofs are obvious.

\begin{proposition}\label{equalSigmaNeedsEqualMass}
The necessary condition for $LRE$ with $\sigma_i=\sigma_j$ is $m_i=m_j.$
\end{proposition}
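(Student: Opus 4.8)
The plan is to read off the conclusion directly from the displayed Property together with the defining condition for a Lagrange relative equilibrium. First I would recall that a configuration $(\sigma_1,\sigma_2,\sigma_3)$ forms an $LRE$ precisely when $\lambda_{12}=\lambda_{23}=0$, which is equivalent to the three expressions $\lambda_1,\lambda_2,\lambda_3$ of \eqref{eqForLagrangeI} all coinciding. In particular every pairwise difference must vanish, so for the specific pair $(i,j)$ under consideration we have $\lambda_{ij}=0$.

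Next I would invoke the Property, which evaluates $\lambda_{ij}$ on the locus $\sigma_i=\sigma_j$ and supplies the factorization $\lambda_{ij}=(m_i-m_j)(\cos\sigma_k-1)$. Combining this with $\lambda_{ij}=0$ gives $(m_i-m_j)(\cos\sigma_k-1)=0$, so it remains only to rule out the second factor. This is immediate from the admissible range of the angles: by the definition of $U$ in \eqref{defOfU} we have $0<\sigma_k<\pi$, whence $\cos\sigma_k<1$ and $\cos\sigma_k-1<0$ is in particular nonzero. Dividing through, we are forced to conclude $m_i-m_j=0$, that is $m_i=m_j$.

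There is essentially no obstacle once the Property is granted; the entire substance of the statement is packaged into that factorization, and the proposition follows as an immediate corollary. Were one to insist on a self-contained argument, the only genuine computation would be the verification of the Property itself, obtained by substituting $\sigma_i=\sigma_j$ into the formulas \eqref{eqForLagrangeI} for $\lambda_i$ and $\lambda_j$ and simplifying the resulting trigonometric combination. Since that identity is already established above, nothing further is required.
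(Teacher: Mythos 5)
Your proof is correct and is exactly the argument the paper has in mind: the paper states that the proposition follows from the Property ``whose proofs are obvious,'' and the obvious argument is precisely yours---on $\sigma_i=\sigma_j$ the $LRE$ condition gives $(m_i-m_j)(\cos\sigma_k-1)=0$, and since $0<\sigma_k<\pi$ forces $\cos\sigma_k-1\neq 0$, one concludes $m_i=m_j$. Nothing further is needed.
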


\begin{cor}\label{unequalMassesMakeScalene}
Unequal masses implies scalene triangle $LRE$.
\end{cor}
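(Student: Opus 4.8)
The plan is to prove the statement by contraposition, leveraging Proposition~\ref{equalSigmaNeedsEqualMass}. First I would fix the terminology: by \emph{unequal masses} I mean that $m_1,m_2,m_3$ are pairwise distinct, and by a \emph{scalene} triangle I mean a shape whose three arc-lengths $\sigma_1,\sigma_2,\sigma_3$ are pairwise distinct (so that neither the equilateral nor the isosceles case occurs). With these readings the claim becomes: if the three masses are pairwise distinct, then every $LRE$ they admit is scalene. Note that the weaker reading of ``unequal'' as ``not all equal'' would make the implication false, since the partial equal mass case $m_1=m_2\neq m_3$ supports isosceles $LRE$; so the pairwise-distinct interpretation is the correct one.

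The engine of the argument is already contained in the Property and Proposition preceding the corollary, so I would first spell out how the Proposition follows. Suppose a shape forms an $LRE$ and satisfies $\sigma_i=\sigma_j$ for some pair $i\neq j$. Being an $LRE$ forces $\lambda_1=\lambda_2=\lambda_3$, so all the differences vanish; in particular $\lambda_{ij}=0$. Substituting $\sigma_i=\sigma_j$ into the Property gives $\lambda_{ij}=(m_i-m_j)(\cos\sigma_k-1)$. Since the domain $U$ requires $0<\sigma_k<\pi$ (the singularity exclusion $\sigma_k\neq 0,\pi$ of the cotangent potential), we have $\cos\sigma_k<1$, so the factor $\cos\sigma_k-1$ is strictly negative and in particular nonzero. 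Hence $\lambda_{ij}=0$ forces $m_i=m_j$, which is precisely Proposition~\ref{equalSigmaNeedsEqualMass}.

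With the Proposition in hand the corollary follows by logical inversion. If an $LRE$ with pairwise-distinct masses were \emph{not} scalene, then at least two of its arc-lengths would coincide, say $\sigma_i=\sigma_j$; the Proposition would then yield $m_i=m_j$, contradicting the hypothesis. Therefore no two $\sigma_k$ can agree and the triangle is scalene. The only point requiring any care is the non-degeneracy $\cos\sigma_k\neq 1$, which I expect to be the sole (and very minor) obstacle: it is exactly the input that rules out the collision/antipodal configurations $\sigma_k\in\{0,\pi\}$, and it is guaranteed automatically by working inside $U$. Beyond that one observation the proof is a single contrapositive step, consistent with the author's remark that it is obvious.
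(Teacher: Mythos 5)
Your proof is correct and follows exactly the route the paper intends: the corollary is the contrapositive of Proposition~\ref{equalSigmaNeedsEqualMass}, which itself follows from Property~1 because $\cos\sigma_k-1\neq 0$ for $0<\sigma_k<\pi$, which is precisely the ``obvious'' argument the paper leaves unwritten. Your observation that ``unequal'' must be read as pairwise distinct (otherwise the partial equal masses case $m_1=m_2\neq m_3$, which admits isosceles $LRE$, would be a counterexample) correctly pins down the only interpretation under which the statement holds.
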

\begin{cor}
Equilateral triangle $LRE$ needs $m_1=m_2=m_3$.
\end{cor}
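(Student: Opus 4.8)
The plan is to reduce the claim directly to Proposition~\ref{equalSigmaNeedsEqualMass}, exploiting that an equilateral configuration is precisely one for which all three geodesic side-angles coincide. First I would fix the meaning of ``equilateral'' in the shape variables used here: since a configuration is parametrized by $(\sigma_1,\sigma_2,\sigma_3)\in U_\textrm{phys}$, where $\sigma_k$ is the side-angle opposite to $m_k$, the triangle is equilateral exactly when $\sigma_1=\sigma_2=\sigma_3$, the spherical analogue of equal Euclidean side lengths. With this identification in hand, the corollary becomes a statement purely about the shape variables.

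Given an equilateral $LRE$ we then have in particular $\sigma_1=\sigma_2$ and $\sigma_2=\sigma_3$, so I would simply invoke Proposition~\ref{equalSigmaNeedsEqualMass} twice: the equality $\sigma_1=\sigma_2$ forces $m_1=m_2$, and the equality $\sigma_2=\sigma_3$ forces $m_2=m_3$. Chaining these yields $m_1=m_2=m_3$, which is the assertion. If one prefers to argue from the ground up rather than quoting the proposition, I would instead appeal to Property~1 directly: writing $\sigma_1=\sigma_2=\sigma_3=:\sigma$, it gives $\lambda_{12}=(m_1-m_2)(\cos\sigma-1)$ and $\lambda_{23}=(m_2-m_3)(\cos\sigma-1)$, and the $LRE$ condition $\lambda_{12}=\lambda_{23}=0$ then forces both mass equalities after dividing out the common factor.

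There is essentially no obstacle here, which is why the author calls the proof obvious; the entire content is carried by Property~1 together with the standing nondegeneracy of the configuration. The only point requiring the slightest care is the observation that $\cos\sigma-1\neq 0$, so that this factor may legitimately be divided out. This is guaranteed by the exclusion $\sigma_k\neq 0,\pi$ built into the definition of $U$ (and hence of $U_\textrm{phys}$), which is itself imposed to avoid the collision and antipodal singularities of the cotangent potential. Thus the argument is complete once the geometric translation of ``equilateral'' into $\sigma_1=\sigma_2=\sigma_3$ is made explicit.
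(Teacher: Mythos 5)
Your proof is correct and follows the paper's own route: the paper derives this corollary as an immediate ("obvious") consequence of Property~1 and Proposition~\ref{equalSigmaNeedsEqualMass}, exactly as you do by applying the proposition to $\sigma_1=\sigma_2$ and $\sigma_2=\sigma_3$. Your added care about $\cos\sigma-1\neq 0$ (guaranteed by $\sigma_k\neq 0$ in $U$) is precisely the implicit nondegeneracy the paper relies on.
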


\begin{property}\label{factorisation}
For $m_i=m_j$, the function $\lambda_{ij}(\sigma_i,\sigma_j,\sigma_k)$ is an anti-symmetric function of $\sigma_i$ and $\sigma_j$,
and can be factorized as
\begin{equation}
\lambda_{ij}(\sigma_i,\sigma_j,\sigma_k)
=\frac{\sin(\sigma_i-\sigma_j)}{\sin^3(\sigma_i)\sin^3(\sigma_j)}\tilde\lambda_{ij}(\sigma_i,\sigma_j,\sigma_k),
\end{equation}
where
\begin{equation}\label{defTildeLambda}
\begin{split}
\tilde\lambda_{ij}(\sigma_i,\sigma_j,\sigma_k)
=&\nu_k \cos(\sigma_k)\sin(\sigma_i+\sigma_j)
	\Big(
		\sin^4(\sigma_i)+\sin^2(\sigma_i)\sin^2(\sigma_j)
		+\sin^4(\sigma_j)
	\Big)\\
&-\frac{\sin^3(\sigma_k)}{4}
	\Big(
		\cos(3\sigma_i+\sigma_j)+\cos(\sigma_i+3\sigma_j)
		-2\cos(\sigma_i+\sigma_j)
	\Big),\\
\text{and}
\quad \nu_k=\frac{m_i}{m_k}=\frac{m_j}{m_k}.
\end{split}
\end{equation}
\end{property}
This property explains the existence of  bifurcation points
between two groups of shapes of $LRE$.

We explain why for $m_1=m_2\ne m_3$ we obtain a bifurcation point of isosceles $LRE$ and scalene $LRE$.
For this case, the surface $\lambda_{12}=0$ is
split into the plane $\sigma_1=\sigma_2$ , and 
the surface
$\tilde\lambda_{12}=0$
by Property \ref{factorisation}. Then,
the condition for $LRE$, $\lambda_{12}=\lambda_{23}=0$,
is split into two cases,
intersection of $\sigma_1=\sigma_2$ and $\lambda_{23}=0$ namely
isosceles $LRE$, and
intersection of $\tilde\lambda_{12}=0$ and $\lambda_{23}=0$,
namely scalene $LRE$.
Then, the intersection of the three surfaces
$\sigma_1=\sigma_2$ and $\tilde\lambda_{12}=0$
and $\lambda_{23}=0$
gives the bifurcation points.

Similarly,
there are bifurcation points
of equilateral $LRE$ and isosceles $LRE$
for the equal masses case, $m_1=m_2=m_3$.

We will use the implicit function theorem to study bifurcations.
By this theorem,
if two continuous differentiable functions $f(x,y,z)$ and $g(x,y,z)$
have a solution $f(x_0,y_0,z_0)=g(x_0,y_0,z_0)=0$,
and $\nabla f \times \nabla g\ne 0$ at $(x_0,y_0,z_0)$,
then a continuation of the solution $f=g=0$  from this point
exists in some finite interval.
A geometrical interpretation of this theorem
is that
the conditions $f=g=0$ and $\nabla f \times \nabla g\ne 0$
means that the two surfaces intersect
at this point
and the vector $\nabla f \times \nabla g$
represents the tangent vector of the intersection curve at
this point.

%%%%%%%%%%%%%%%%%%%%%%%%%%%%%%%%%%%%%%%%%%%%%%%%%%%%%%%%%%%%%

\section{Equal masses case}\label{equal masses}
As shown in Property \ref{factorisation},
the conditions for have a $LRE$,
$\lambda_{12}=0$ and $\lambda_{23}=0$ 
are split into
($\sigma_1=\sigma_2$ or $\tilde\lambda_{12}=0$)
and ($\sigma_2=\sigma_3$ or $\tilde\lambda_{23}=0$).

First we will show that there are no scalene $LRE$
(intersection of $\tilde\lambda_{12}=0$
and $\tilde\lambda_{23}=0$),
then we describe the equilateral 
($\sigma_1=\sigma_2$ and $\sigma_2=\sigma_3$) and isosceles $LRE$ 
($\sigma_1=\sigma_2$ and  $\tilde\lambda_{23}=0$
or $\sigma_2=\sigma_3$ and $\tilde\lambda_{12}=0$)
and the bifurcation between them.

\subsection{No scalene $LRE$}
\begin{theorem}
There are no scalene $LRE$
for the equal masses case.
\end{theorem}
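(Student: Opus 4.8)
The plan is to reduce the statement to a purely algebraic question and then exploit the $S_3$ symmetry of the equal-mass case. Since a scalene configuration has $\sigma_1,\sigma_2,\sigma_3$ pairwise distinct, every factor $\sin(\sigma_i-\sigma_j)$ is nonzero, so by Property \ref{factorisation} the defining conditions $\lambda_{12}=\lambda_{23}=0$ are equivalent to $\tilde\lambda_{12}=\tilde\lambda_{23}=0$; using the identity $\lambda_{12}+\lambda_{23}+\lambda_{31}=0$ together with $\sigma_3\neq\sigma_1$ this also forces $\tilde\lambda_{31}=0$. Hence it suffices to show that the system $\tilde\lambda_{12}=\tilde\lambda_{23}=\tilde\lambda_{31}=0$ (with $\nu_k=1$) has no solution in $U_\textrm{phys}$ with distinct $\sigma_k$.

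My first step is to put this system in a transparent form. Writing $c_k=\cos\sigma_k$ and $p_k=\sin^3\sigma_k>0$, the equal-mass conditions $\lambda_1=\lambda_2=\lambda_3$ in \eqref{eqForLagrangeI} say exactly that the positive vector $\mathbf p=(p_1,p_2,p_3)^T$ is an eigenvector of a symmetric, zero-diagonal matrix:
\[
C\,\mathbf p=\ell\,\mathbf p,\qquad
C=\begin{pmatrix}0&c_3&c_2\\ c_3&0&c_1\\ c_2&c_1&0\end{pmatrix},\qquad \ell=2-\lambda .
\]
Subtracting the three scalar eigen-equations in pairs yields the relation $(c_3+\ell)(p_1-p_2)=p_3(c_2-c_1)$ together with its two cyclic images; only two of the three are independent. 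Because $\cos$ is strictly decreasing on $(0,\pi)$, the sign of $c_j-c_i$ is that of $\sigma_i-\sigma_j$, and by the full permutation symmetry of the equal-mass problem I may order the sides as $\sigma_1<\sigma_2<\sigma_3$, so that $c_1>c_2>c_3$.

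At this point a sign/ordering analysis runs as follows: if $c_k+\ell>0$ for every $k$, the pairwise relations give $p_1<p_2<p_3$, i.e. $\sin\sigma_1<\sin\sigma_2<\sin\sigma_3$. I expect the main obstacle to be precisely that this conclusion is \emph{consistent} with the ordering and therefore produces no contradiction by itself; the same happens if one instead multiplies the three relations, which only equates two quantities of the same sign. Thus the qualitative sign information is genuinely insufficient, and one is forced into a quantitative comparison.

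To close the argument I would eliminate $\ell$ between two of the pairwise relations and feed in the explicit link $p_k=(1-c_k^2)^{3/2}$, the physical restriction $\ell\ge-1$ (equivalent to $M-\lambda\ge0$, needed for $\cos\theta_k$ in \eqref{sigmaToTheta} to be real), and the triangle constraints $\sigma_k\le\sigma_i+\sigma_j$, $\sum_k\sigma_k\le2\pi$ defining $U_\textrm{phys}$, reducing everything to a single scalar inequality in $(c_1,c_2,c_3)$ whose only solutions satisfy $c_i=c_j$ for some pair, contradicting scalene. The delicate point, which I expect to be the real work of the proof, is this final estimate: since $\sin^3\sigma$ is increasing on $(0,\pi/2)$ but decreasing on $(\pi/2,\pi)$, the monotone comparison between $c_k$ and $p_k$ fails once a side is obtuse, so the argument should split into an acute case (where a direct rearrangement of the pairwise relations should suffice) and an obtuse case, in which the constraints $\sigma_3\le\sigma_1+\sigma_2$ and $\sum_k\sigma_k\le 2\pi$ must be used to control the magnitude and recover the contradiction.
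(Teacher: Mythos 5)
Your reduction to $\tilde\lambda_{12}=\tilde\lambda_{23}=\tilde\lambda_{31}=0$ and the eigenvector reformulation $C\,\mathbf{p}=\ell\,\mathbf{p}$ with $\ell=2-\lambda$, together with the pairwise relations $(c_k+\ell)(p_i-p_j)=p_k(c_j-c_i)$, are correct, and you correctly diagnose that the sign/ordering analysis alone yields no contradiction (the orderings $c_1>c_2>c_3$ and $p_1<p_2<p_3$ are mutually consistent). But the proof stops exactly there: the entire content of the theorem is the ``final estimate'' that you only announce. You never perform the elimination of $\ell$, never write down the resulting relations in $(c_1,c_2,c_3)$, and never prove that their only solutions in $U_\textrm{phys}$ satisfy $c_i=c_j$; you merely assert that an acute/obtuse case split ``should suffice.'' There is also a structural reason to doubt the plan as stated: after eliminating $\ell$ you still have two independent equations in three unknowns, whose solution set is generically a curve --- and indeed the isosceles and equilateral $LRE$ do form such curves --- so reducing ``everything to a single scalar inequality'' cannot capture the full system; the claim that all solutions of that system have $c_i=c_j$ is precisely the theorem itself, not a step toward it.

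For comparison, the paper closes this gap by an exact symmetry argument rather than estimates. Since $g=\tilde\lambda_{12}$ is symmetric in its first two arguments and the three conditions are cyclic permutations of one another, a scalene solution $(x,y,z)$ would force $g$ to vanish on all six permutations of $(x,y,z)$. One then divides off the antisymmetric factors $\sin(y-z)$, $\cos x-\cos y$, $\cos y-\cos z$ (all nonzero for a scalene shape) in three successive steps, producing a totally symmetric function
\begin{equation}
g_3(x,y,z)=\Big(3-\cos(2x)-\cos(2y)-\cos(2z)\Big)
\cos\Big(\tfrac{x+y}{2}\Big)\cos\Big(\tfrac{y+z}{2}\Big)\cos\Big(\tfrac{z+x}{2}\Big),
\end{equation}
which must also vanish; its zeros force $x+y=\pi$ up to relabeling, and substituting back gives $g(x,y,z)|_{x+y=\pi}=\tfrac12\big(\cos(2x)-1\big)\sin^3(z)\neq 0$, a contradiction. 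Some mechanism of this exact algebraic kind --- not a projected inequality whose validity is left open --- is what your argument is missing.
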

\begin{proof}
We will show a contradiction
if we assume that there is a scalene $LRE$ with
$(\sigma_1,\sigma_2,\sigma_3)=(x,y,z)$.
Let be $\tilde\lambda_{12}=g(\sigma_1,\sigma_2,\sigma_3)$,
then $\tilde\lambda_{23}=g(\sigma_2,\sigma_3,\sigma_1)$
and $\tilde\lambda_{31}=g(\sigma_3,\sigma_1,\sigma_2)$.
Since, this is a scalene triangle, $(x,y,z)$ must satisfies
$g(x,y,z)=g(y,z,x)=g(z,x,y)=0$.
By the definition of the function $g$, it is symmetric for the first two variables.
Therefore, $g$ must be zero for any permutation of $(x,y,z)$
if the assumption is true.

Now, define $g_0=g$ and $g_n$ for $n=1,2,3$ by
\begin{equation}
\begin{split}
g_0(x,y,z)-g_0(x,z,y)&=2\sin(y-z)g_1(x,y,z),\\
g_1(x,y,z)-g_1(y,x,z)
	&=-2(\cos x-\cos y)g_2(x,y,z),\\
g_2(x,y,z)-g_2(x,z,y)
	&=16(\cos y-\cos z)g_3(x,y,z).
\end{split}
\end{equation}
Since 
$g_0=0$ for any permutations of $(x,y,z)$ and $\sin(y-z)\ne 0$,
$g_1=0$ for any permutations of $(x,y,z)$.
Similarly, $g_2=g_3=0$.
Here, the function $g_3$ is a totally symmetric function of $(x,y,z)$,
\begin{equation}
\begin{split}
g_3(x,y,z)&=\Big(3-\cos(2x)-\cos(2y)-\cos(2z)\Big)\\
&\hspace{1cm}
	\cos\Big((x+y)/2\Big)\cos\Big((y+z)/2\Big)\cos\Big((z+x)/2\Big).
\end{split}
\end{equation}
The possible solutions of $g_3=0$ are
$x+y=\pi$ or $y+z=\pi$ or $z+x=\pi$.
However
\begin{equation}
g(x,y,z)|_{x+y=\pi}=\frac{1}{2}\Big(\cos(2x)-1\Big)\sin^3(z)\ne 0.
\end{equation}
Similarly
$g(y,z,x)|_{y+z=\pi}\ne 0$,
and
$g(z,x,y)|_{z+x=\pi}\ne 0$.
This is the contradiction we are looking for.
\end{proof}

\subsection{Isosceles and equilateral $LRE$}
%Now, we state a simple but important property, very useful for the analysis of the surfaces defined by $\lambda_{ij}=0$ in 
%$U_\textrm{phys}$.
%
%\begin{property}
%For $\sigma_i=\sigma_j$,
%\begin{equation}
%\lambda_{ij}=(m_i-m_j)(\cos\sigma_k-1).
%\end{equation}
%\end{property}
%
%From this property we obtain the following proposition and corollaries whose proofs are obvious.
%
%\begin{proposition}\label{equalSigmaNeedsEqualMass}
%The necessary condition for $LRE$ with $\sigma_i=\sigma_j$ is $m_i=m_j.$
%\end{proposition}
%
%\begin{cor}\label{unequalMassesMakeScalene}
%Unequal masses implies scalene triangle $LRE$.
%\end{cor}
%%
%\begin{cor}
%Equilateral triangle $LRE$ needs $m_1=m_2=m_3$.
%\end{cor}
%
%The condition ($\sigma_1=\sigma_2$ and $\sigma_2=\sigma_3$)
%gives equilateral $LRE$,
%and the condition
%($\sigma_1=\sigma_2$ and $\tilde\lambda_{23}=0$)
%or
%($\tilde\lambda_{12}=0$  and $\sigma_2=\sigma_3$)
%gives isosceles $LRE$.
%
In this subsection, we consider the shape
with $\sigma_1=\sigma_2=\sigma$.
So, 
\begin{equation}\label{UandU_phys}
U=(0,\pi)^2, \quad 
U_\textrm{phys}=\{(\sigma,\sigma_3)\in U \,\, | \,\,
\sigma_3\le 2\sigma \mbox{ and } \sigma_3\le 2(\pi-\sigma)\}.
\end{equation}

The isosceles solution is the solution of
\begin{equation}
\begin{split}
g(\sigma,\sigma_3)
=&g(\sigma_3,\sigma,\sigma)\\
=&\cos(\sigma)
	\Big(\sin^4(\sigma)+\sin^2(\sigma)\sin^2(\sigma_3)+\sin^4(\sigma_3)
	\Big)\\
	&+\frac{\sin^3(\sigma)}{2}
	\Big(\cos(\sigma+\sigma_3)
	-\cos(\sigma-\sigma_3)\cos(2(\sigma+\sigma_3))
	\Big)
=0.
\end{split}
\end{equation}
Obviously $g(\pi-\sigma,\pi-\sigma_3)=g(\sigma,\sigma_3)$.

\begin{proposition}
The bifurcation points between equilateral $LRE$ and
isosceles $LRE$ are $\sigma_k=\sigma_c$ and $\sigma_k = \pi-\sigma_c$,
where $\sigma_c=2^{-1}\arccos(-4/5)=1.249...$ (see Figure \ref{figLREForEqualMasses}).
\end{proposition}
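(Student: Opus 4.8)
The plan is to realize the bifurcation as the intersection of the two one–parameter families of $LRE$ that live on the slice $\sigma_1=\sigma_2=\sigma$. On this slice the equation $\lambda_{12}=0$ holds automatically, since by Property~\ref{factorisation} it carries the factor $\sin(\sigma_1-\sigma_2)$, which vanishes when $\sigma_1=\sigma_2$; hence only $\lambda_{23}=0$ remains to be imposed. Again by Property~\ref{factorisation}, this second condition splits into the equilateral branch $\sigma_3=\sigma$ (where the factor $\sin(\sigma_2-\sigma_3)$ vanishes) and the isosceles branch $\tilde\lambda_{23}=g(\sigma,\sigma_3)=0$. A bifurcation point between equilateral and isosceles $LRE$ is therefore exactly a point of the isosceles curve that also lies on the equilateral line, i.e. a value of $\sigma$ with $g(\sigma,\sigma)=0$.

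First I would substitute $\sigma_3=\sigma$ into the isosceles equation $g(\sigma,\sigma_3)=0$. Then I would reduce the resulting one–variable trigonometric expression using the double– and triple–angle identities together with the product–to–sum identity for $\cos(2\sigma)-\cos(4\sigma)$, and factor out the power of $\sin\sigma$, which is nonzero because $0<\sigma<\pi$. The point of this step is to discard every factor that cannot vanish in the physical region and to isolate a single polynomial factor in $\sin^2\sigma$. I expect the expression to collapse to a multiple of $\sin^5(\sigma)\,\bigl(9-10\sin^2(\sigma)\bigr)$, so that the bifurcation condition reduces to $9-10\sin^2(\sigma)=0$, equivalently $4+5\cos(2\sigma)=0$, i.e. $\cos(2\sigma)=-4/5$.

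Solving this gives $\sin^2\sigma=9/10$, hence the two roots $\sigma=\tfrac12\arccos(-4/5)=\sigma_c$ and $\sigma=\pi-\sigma_c$ in $(0,\pi)$; these are precisely the two claimed points, and they are interchanged by the symmetry $g(\pi-\sigma,\pi-\sigma_3)=g(\sigma,\sigma_3)$ noted above, so it suffices to exhibit one of them. Finally I would check that each root lies in $U_\textrm{phys}$ and that the meeting is a genuine crossing: working inside the plane $\sigma_1=\sigma_2$, the equilateral locus is the line $\sigma_3=\sigma$ and the isosceles locus is $g(\sigma,\sigma_3)=0$, and by the implicit function theorem their intersection is transversal as soon as the two planar gradients are linearly independent at the point, which rules out a mere tangency.

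The main obstacle I anticipate is the trigonometric collapse in the second step: one must verify that, after setting $\sigma_3=\sigma$, the terms $\cos(3\sigma+\sigma)$, $\cos(\sigma+3\sigma)$ and the cross terms recombine so that the whole expression is an exact multiple of $\sin^5(\sigma)\,\bigl(9-10\sin^2(\sigma)\bigr)$, with no stray term surviving. Everything else—solving the quadratic in $\sin^2\sigma$, reading off $\sigma_c$, and invoking the $\sigma\mapsto\pi-\sigma$ symmetry to obtain the companion root—is routine once that identity is in hand.
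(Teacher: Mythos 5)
Your proposal is correct and takes essentially the same route as the paper: the paper's proof also reduces the bifurcation condition to $g(\sigma,\sigma)=0$ on the slice $\sigma_1=\sigma_2=\sigma$ and factors it as $\sin^5(\sigma)\big(4+5\cos(2\sigma)\big)$, which is exactly your $\sin^5(\sigma)\big(9-10\sin^2(\sigma)\big)$, yielding $\sigma_c$ and $\pi-\sigma_c$. Your additional checks (membership in $U_\textrm{phys}$ and transversality of the crossing) go slightly beyond the paper's two-line argument but are consistent with it.
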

\begin{proof}
The bifurcation points are the solutions of 
\begin{equation}
g(\sigma,\sigma)
=\sin^5(\sigma)\Big(4+5\cos(2\sigma)\Big)
=0.
\end{equation}.
\end{proof}
\begin{figure}
   \centering
   \includegraphics[width=6cm]{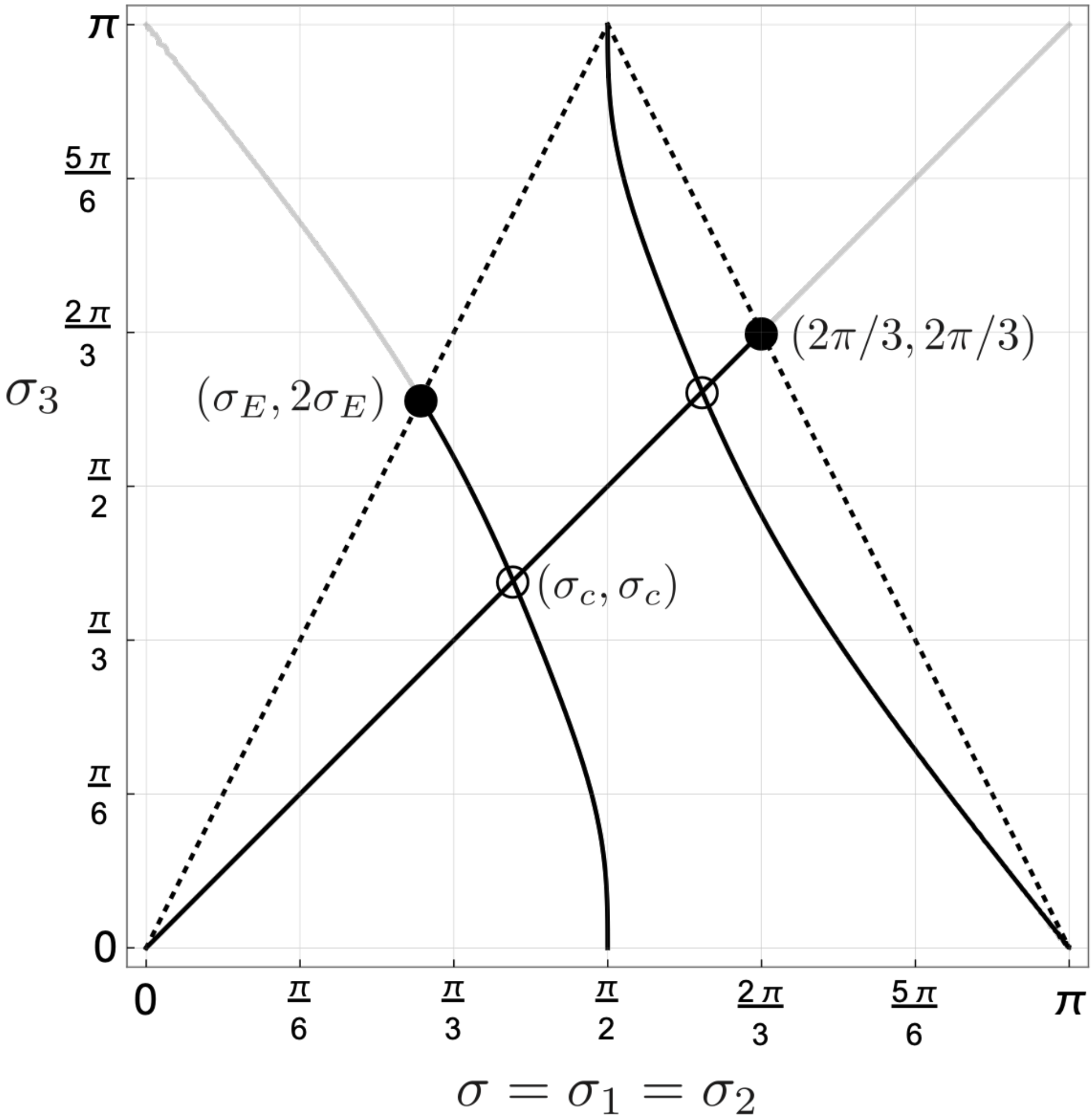} 
   \caption{Equilateral $LRE$ (the straight line)
   and isosceles $LRE$ (two curves).
   Two points with hollow circle are the bifurcation points
   of equilateral and isosceles.
   The dotted lines represents the boundary of $U_\textrm{phys}$.
   The end 
   points shown by the black circles are  Euler $RE$.
   The point $(\sigma_E,2\sigma_E)$ is the bifurcation point
   of $LRE$ and $ERE$.
   }
   \label{figLREForEqualMasses}
\end{figure}

\begin{proposition}\label{eulerEndPointsForEqualMasses}
The end points of an equilateral $LRE$,
$\sigma_k=2\pi/3$, $k=1,2,3$,
is the $ERE$ on the equator.
The 
end point 
of isosceles $LRE$,
$p_E=(\sigma_E,2\sigma_E)$
with 
\begin{equation}\label{defSigmaE}
\sigma_E=\arccos(2^{-3/4})=0.934...
\end{equation}
is the $ERE$ on a rotating meridian.
\end{proposition}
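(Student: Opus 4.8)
The plan is to handle the two endpoints separately; in each case I identify the face of $U_\textrm{phys}$ at which the $LRE$ continuation terminates and then check the limiting configuration against the relevant $ERE$ condition from \cite{Fujiwara}. For the equilateral endpoint, note first that on the diagonal $\sigma_1=\sigma_2=\sigma_3=\sigma$ every pair has $\sigma_i=\sigma_j$, so the first Property gives $\lambda_{ij}=(m_i-m_j)(\cos\sigma_k-1)=0$ for all pairs when $m_1=m_2=m_3$; hence the diagonal is an $LRE$ for each admissible $\sigma$. Intersecting with $U_\textrm{phys}$, the triangle inequalities $\sigma\le 2\sigma$ hold trivially while $\sum_k\sigma_k\le 2\pi$ forces $\sigma\le 2\pi/3$, so the continuation terminates at $\sigma=2\pi/3$, on the equator face $\sum_k\sigma_k=2\pi$. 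It then remains only to match this with the $ERE$ on the equator: with $\mu_k=\sqrt{m_im_j}=m$, formula \eqref{SigmaForEREonEquator} returns $\arccos(-1/2)=2\pi/3$ and \eqref{ConditionEREonEquator} reduces to $m<2m$.

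For the isosceles endpoint I work in the slice $\sigma_1=\sigma_2=\sigma$ of \eqref{UandU_phys}, where the isosceles $LRE$ are the zeros of $g(\sigma,\sigma_3)$ and the face $\sigma_3=2\sigma$ is exactly the collinear locus $\sigma_3=\sigma_1+\sigma_2$, with $m_3$ between $m_1$ and $m_2$. The first step is to show this entire face consists of $ERE$ on a rotating meridian: substituting $m_1=m_2=m_3$ and $\sigma_1=\sigma_2$ into \eqref{eqForEREI}, the first bracket vanishes because $\sin(2\sigma_2)=\sin(2\sigma_1)$, and the second and third brackets are equal and so cancel, giving $d\equiv 0$ on the face.

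The second step is to locate where the continuation $g=0$ meets this face. Setting $\sigma_3=2\sigma$ and writing $u=\cos^2\sigma$, and using $\sin(3\sigma)=\sin\sigma\,(4\cos^2\sigma-1)$ together with the difference-of-cubes identity $(4\cos^2\sigma-1)(1+4\cos^2\sigma+16\cos^4\sigma)=64\cos^6\sigma-1$, the quantity $g(\sigma,2\sigma)$ collapses to a nonzero multiple of $\cos\sigma\,\sin^3(\sigma)\,(8u^2-1)(4u+1)(u-1)$. The only root with $\sigma\in(0,\pi)$ and $u\in(0,1)$ is $u=2^{-3/2}$, that is $\sigma=\sigma_E=\arccos(2^{-3/4})$ of \eqref{defSigmaE}. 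Combining the two steps, $p_E=(\sigma_E,2\sigma_E)$ is simultaneously the terminal point of the isosceles $LRE$ continuation and a point of the $ERE$-meridian locus, which is the assertion.

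The main obstacle is the algebraic reduction in the previous paragraph; the two displayed identities are what make it tractable, since they allow the odd powers of $\sin\sigma$ to combine so that the surviving bracket becomes a quartic in $u=\cos^2\sigma$ that factors over the rationals, isolating the admissible root. Everything else, namely pinning down the two boundary faces and verifying the two $ERE$ conditions, is routine.
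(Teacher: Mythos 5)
Your proof is correct and takes essentially the same route as the paper: the core of both arguments is the factorization of $g(\sigma,2\sigma)$, and your form $\cos\sigma\,\sin^3(\sigma)\,(8u^2-1)(4u+1)(u-1)$ with $u=\cos^2\sigma$ is exactly $-\tfrac{1}{2}$ times the paper's $2\cos(\sigma)\sin^5(\sigma)\big(3+2\cos(2\sigma)\big)\big(2+4\cos(2\sigma)+\cos(4\sigma)\big)$, yielding the unique admissible root $\cos^2\sigma=2^{-3/2}$, i.e.\ $\sigma=\sigma_E$. The only difference is completeness, in your favor: where the paper calls the equilateral endpoint ``obvious'' and leaves implicit that $p_E$ is an $ERE$, you explicitly check the equator conditions \eqref{ConditionEREonEquator}--\eqref{SigmaForEREonEquator} at $\sigma_k=2\pi/3$ and verify $d\equiv 0$ in \eqref{eqForEREI} on the face $\sigma_1=\sigma_2$, $\sigma_3=2\sigma$ for equal masses.
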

\begin{proof}
The former is obvious.
The latter is the solution of
\begin{equation}
g(\sigma,2\sigma)
=2\cos(\sigma)\sin(\sigma)^5
	\Big(3+2\cos(2\sigma)\Big)
	\Big(2+4\cos(2\sigma)+\cos(4\sigma)\Big)
=0.
\end{equation}
\end{proof}

The point $p_E$ is the bifurcation point 
between isosceles $LRE$ and $ERE$ 
on a rotating meridian.
We will consider this bifurcation point in the next section.

\begin{remark}
Although the points $\sigma_k=2\pi/3$ and $p_E$ are the end points 
of $LRE$ in $U_\textrm{phys}$,
they are not the end points 
of configurations of $LRE$, they are just the coupling points.
Remember the definitions
\ref{defBifurcationPoint} and
\ref{defEulerJunction}.
\end{remark}

%%%%%%%%%%%%%%%%%%%%%%%%%%%%%%%%%%%%%%%%%%%%%%%%%%%%%%%%%%%%%%

\section{Partial equal masses case}\label{partial equal masses}
In this section, we consider the case $m_1=m_2=m_3\nu$,
with $\nu > 0$.

As shown in Property \ref{factorisation},
the condition $\lambda_{12}=0$
is reduced to  $\sigma_1=\sigma_2$ or $\tilde\lambda_{12}=0$.
In the next subsection, we consider the first case,
isosceles $LRE$
(intersection of $\sigma_1=\sigma_2$ and $\lambda_{23}=0$).
The subsection \ref{bifurcationIsoscelesLREandERE}
is devoted to the bifurcation of
isosceles $LRE$ and isosceles $ERE$.
The second case, 
($\tilde\lambda_{12}=0$ and $\lambda_{23}=0$)
describes scalene $LRE$.
In subsection \ref{isoscelesToScalene},
we will treat 
the bifurcation of isosceles $LRE$ and scalene $LRE$
($\sigma_1=\sigma_2$ and
$\tilde\lambda_{12}=0$ and $\lambda_{23}=0$).

\subsection{Isosceles $LRE$}
Let be $\sigma=\sigma_1=\sigma_2$, then $U$ and $U_\textrm{phys}$ take the same form as in \eqref{UandU_phys}.

%In this subsection, we consider the case of  the isosceles $LRE$ with
%$\sigma=\sigma_1=\sigma_2$.
%So, $U=(0,\pi)^2$,
%$U_\textrm{phys}=\{(\sigma,\sigma_3)\in U \,\, | \,\,
%\sigma_3\le 2\sigma \mbox{ and } \sigma_3\le 2(\pi-\sigma)\}$.

\subsubsection{Mass ratio for a given shape}
\label{massRatioForGivenShape1}
Let be
\begin{eqnarray}
\alpha(\sigma,\sigma_3)
&=&(1+\cos\sigma_3)\sin^3(\sigma_3)-2\cos(\sigma)\sin^3(\sigma), \\
\beta(\sigma,\sigma_3)
&=&\sin^3(\sigma)-\cos(\sigma)\sin^3(\sigma_3).
\end{eqnarray}

A simple calculation proves that the solutions of $\alpha=\beta=0$
are, the point $(\sigma_E,2\sigma_E)$ in $U_\textrm{phys}$, and the points $(0,0)$ and $(0,\pi)$ on the boundary of $U$. 
Where, $\sigma_E$ is defined by (\ref{defSigmaE}).
See Figure \ref{figRegionDefinedByAlphaBeta}.
\begin{figure}
   \centering
   \includegraphics[width=7cm]{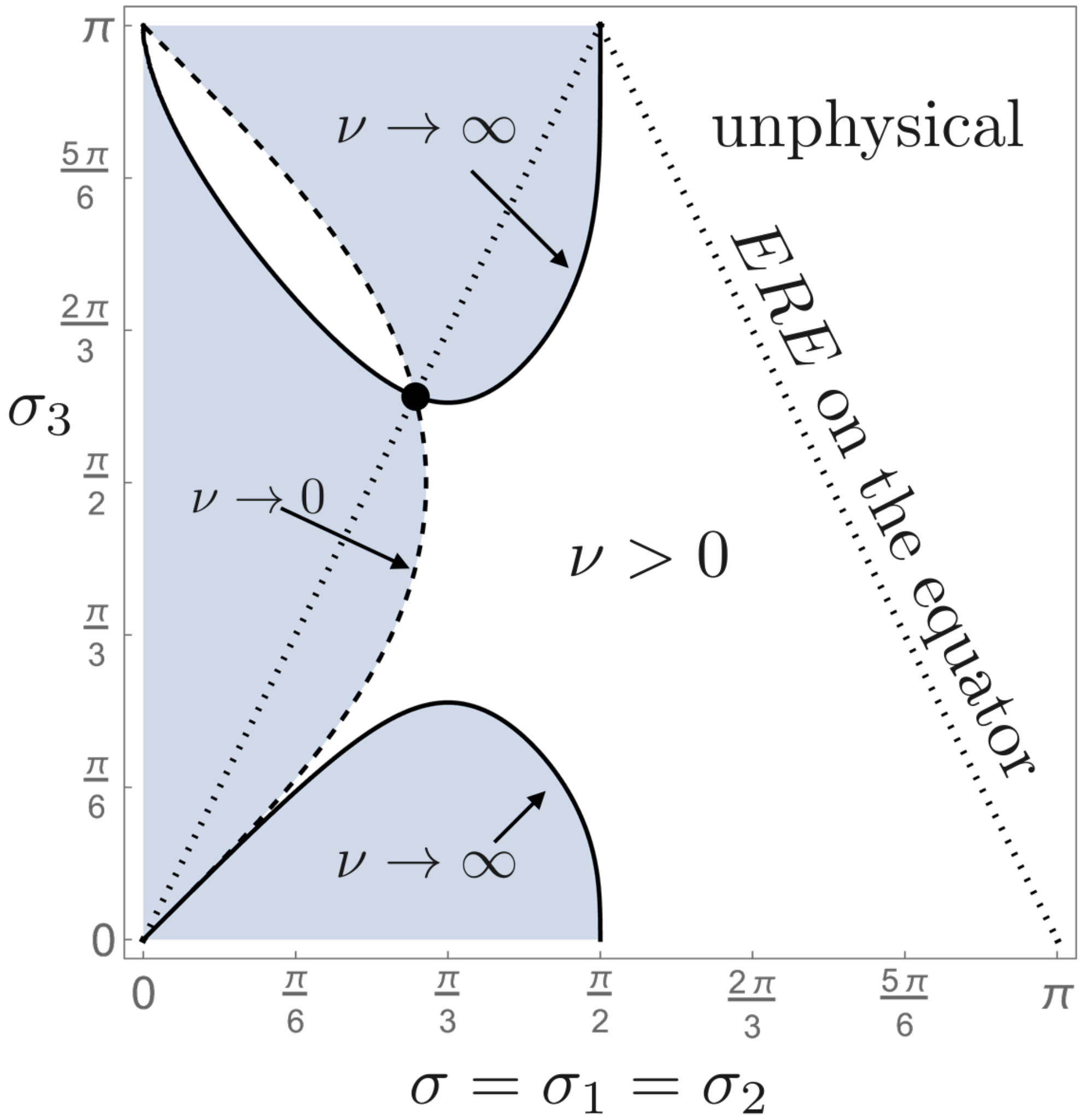} 
   \caption{The contours of $\alpha=0$ (solid curves)
   and $\beta=0$ (dashed curve)
   represent the mass ratio $\nu\to \infty$ and $\nu\to 0$
   respectively.
   The white and grey regions represent 
   $\alpha\beta$ is positive or negative, respectively.
   The two dotted straight lines represents the boundary
   of $U_\textrm{phys}$.
   The dotted line $\sigma_3=2\pi-2\sigma$
   represents isosceles $ERE$ on the equator.
   See Proposition \ref{eulerEndPoints}.
   For any point $(\sigma,\sigma_3)$ in 
   the white region inside the two lines, 
   there is a unique suitable mass ratio $\nu$
   for which isosceles $LRE$ exist.
   In the grey region, isosceles $LRE$ do not exist
   for any choice of positive masses. 
   See Proposition \ref{propForIsoscelesLRE}.
   The point $(\sigma_E,2\sigma_E)$
   shown in the black circle, represents 
    the isosceles $ERE$ on a rotating meridian
   for any mass ratio $\nu$.
   See Propositions \ref{propisoscelesEREforAnyNu}
   and \ref{bifurcationLREandEREforPartialEqualMass}.
   }
   \label{figRegionDefinedByAlphaBeta}
\end{figure}

Then the following proposition follows.
\begin{proposition}
\label{propForIsoscelesLRE}
For $m_1=m_2$,
any point $(\sigma,\sigma_3)$  in $U_\textrm{phys}$
with $\alpha\beta>0$
forms an isosceles $LRE$ by
choosing suitable $\nu$.
\end{proposition}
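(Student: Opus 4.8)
The plan is to exploit the fact that for an isosceles configuration $\sigma_1=\sigma_2=\sigma$ with $m_1=m_2$ the equation $\lambda_{12}=0$ holds automatically, so that a single scalar equation, $\lambda_{23}=0$, determines the admissible mass ratio. Indeed, by the identity $\lambda_{ij}=(m_i-m_j)(\cos\sigma_k-1)$ (the first Property of this section), $\lambda_{12}$ vanishes whenever $m_1=m_2$, for every $\sigma_3$. Hence a point $(\sigma,\sigma_3)\in U_\textrm{phys}$ yields an isosceles $LRE$ precisely when $\lambda_{23}=0$. Normalizing the masses as $m_1=m_2=\nu$ and $m_3=1$ (harmless, since only ratios enter the conditions and $m_1=m_2=m_3\nu$), I would substitute $\sigma_1=\sigma_2=\sigma$ into the expressions for $\lambda_2$ and $\lambda_3$ in \eqref{eqForLagrangeI}.

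The key structural observation is that, because each $\lambda_i$ is linear in the masses, $\lambda_{23}=\lambda_2-\lambda_3$ is affine (degree one) in $\nu$. A direct substitution gives
\begin{equation}
\lambda_{23}=\Big(1-\frac{\cos\sigma\,\sin^3\sigma_3}{\sin^3\sigma}\Big)
-\nu\Big(1+\cos\sigma_3-\frac{2\cos\sigma\,\sin^3\sigma}{\sin^3\sigma_3}\Big),
\end{equation}
so that solving $\lambda_{23}=0$ for $\nu$ and clearing the denominators $\sin^3\sigma$ and $\sin^3\sigma_3$ produces
\begin{equation}
\nu=\frac{\sin^3\sigma_3\,\big(\sin^3\sigma-\cos\sigma\,\sin^3\sigma_3\big)}
{\sin^3\sigma\,\big((1+\cos\sigma_3)\sin^3\sigma_3-2\cos\sigma\,\sin^3\sigma\big)}
=\frac{\sin^3\sigma_3}{\sin^3\sigma}\cdot\frac{\beta(\sigma,\sigma_3)}{\alpha(\sigma,\sigma_3)}.
\end{equation}
This is exactly where $\alpha$ and $\beta$ enter: they are precisely the cleared denominator and numerator of the solved mass ratio, which is what makes the final criterion come out cleanly.

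To finish, note that since $\sigma,\sigma_3\in(0,\pi)$ on $U$ the prefactor $\sin^3\sigma_3/\sin^3\sigma$ is strictly positive, so the sign of $\nu$ equals the sign of $\beta/\alpha$, that is, of $\alpha\beta$. Thus the hypothesis $\alpha\beta>0$ is equivalent to $\nu>0$; moreover the strict inequality forces $\alpha\neq0$, so $\nu$ is finite and uniquely determined by the displayed formula. Since $\lambda_{12}=0$ is automatic and $\lambda_{23}=0$ holds by construction, the triple $(\sigma,\sigma,\sigma_3)$ with masses $(\nu,\nu,1)$ satisfies the full $LRE$ condition $\lambda_{12}=\lambda_{23}=0$, giving an isosceles $LRE$ for exactly one positive $\nu$. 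The main obstacle is not conceptual but lies in the algebraic bookkeeping of the second step: one must check that clearing denominators in $\lambda_{23}=0$ reproduces $\alpha$ and $\beta$ on the nose, rather than some other equivalent pair, so that the positivity condition collapses exactly to $\alpha\beta>0$.
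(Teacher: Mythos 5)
Your proof is correct and follows essentially the same route as the paper: both reduce the $LRE$ condition to the single equation $\lambda_{23}=0$ (since $\lambda_{12}=0$ is automatic for $m_1=m_2$, $\sigma_1=\sigma_2$), write it as $\lambda_{23}=\beta/\sin^3(\sigma)-\nu\,\alpha/\sin^3(\sigma_3)$, and solve for $\nu$, so that positivity of $\nu$ is equivalent to $\alpha\beta>0$. Your algebraic verification that the cleared numerator and denominator are exactly $\beta$ and $\alpha$ is precisely the computation the paper performs.
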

\begin{proof}
For this case, $\lambda_{12}=0$ is satisfied.
Therefore, the condition for $LRE$ is
\begin{equation}\label{conditionIsoscelesLRE2}
\begin{split}
\lambda_{23}
%=-\lambda_{31}
=-\nu\alpha/\sin^3(\sigma_3)\, 
	+\beta/\sin^3(\sigma)
=0.
\end{split}
\end{equation}
Therefore, if $\alpha\ne 0$ and $\beta\ne 0$,
\begin{equation}\label{NuforIsoscelesSigma}
\nu=\frac{\sin^3(\sigma_3)\beta(\sigma,\sigma_3)}{\sin^3(\sigma)\alpha(\sigma,\sigma_3)}.
\end{equation}
This equation defines $\nu$ in terms of $(\sigma, \sigma_3)$ uniquely,
and $\nu>0$ demands $\alpha\beta>0$. In fact, the region $\alpha\beta>0$ in $U_\textrm{phys}$ is
the region where $\alpha>0$ and $\beta>0$.
\end{proof}

The next result follows inmediatly from equation (\ref{conditionIsoscelesLRE2}).

\begin{proposition}
\label{propisoscelesEREforAnyNu}
The shape $(\sigma_E,2\sigma_E)$ that makes $\alpha=\beta=0$
satisfies the condition for $LRE$ for any $\nu$.
\end{proposition}

Actually this shape is an $ERE$ on a rotating meridian.
Therefore, it corresponds to the bifurcation point of $LRE$ and $ERE$,
that is, we can pass from a $LRE$ to an $ERE$ or vice versa.

\begin{proposition}[Euler coupling]\label{eulerEndPoints}
On the line $\sigma_3=2\sigma$, only $\sigma=\sigma_E$
satisfies the condition for $LRE$ with any $\nu$.
On the other hand, on the line $\sigma_3=2\pi-2\sigma$,
any point in $\pi/2<\sigma<\pi$ satisfies the condition for $LRE$
choosing suitable $\nu<4$.
\end{proposition}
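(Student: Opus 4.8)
The plan is to work directly from the isosceles condition \eqref{conditionIsoscelesLRE2}, $\lambda_{23} = -\nu\,\alpha/\sin^3(\sigma_3) + \beta/\sin^3(\sigma) = 0$, regarded as an affine function of the mass ratio $\nu$, and to treat the two lines separately.

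For the line $\sigma_3 = 2\sigma$, the key observation is that an affine function of $\nu$ vanishes for \emph{every} $\nu$ exactly when both its slope and its constant term vanish, i.e.\ when $\alpha = 0$ and $\beta = 0$ simultaneously. The simultaneous zeros of $(\alpha,\beta)$ were already classified before Proposition \ref{propForIsoscelesLRE}: inside $U_{\textrm{phys}}$ the only one is $(\sigma_E, 2\sigma_E)$. Since this point lies on the line $\sigma_3 = 2\sigma$, the first assertion follows immediately --- it is exactly Proposition \ref{propisoscelesEREforAnyNu} read along that line --- and needs no further computation.

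For the line $\sigma_3 = 2\pi - 2\sigma$ with $\sigma \in (\pi/2,\pi)$, I would substitute $\cos\sigma_3 = \cos(2\sigma)$ and $\sin\sigma_3 = -\sin(2\sigma) = -2\sin\sigma\cos\sigma$, where this sign is the physically correct one because $\cos\sigma < 0$ on the interval keeps $\sin\sigma_3 > 0$. After collecting terms I expect the factorizations
\[
\alpha = -2\cos\sigma\,\sin^3\sigma\,\bigl(1 + 8\cos^4\sigma\bigr),\qquad
\beta = \sin^3\sigma\,\bigl(1 + 8\cos^4\sigma\bigr).
\]
On $(\pi/2,\pi)$ one has $\sin\sigma > 0$, $1 + 8\cos^4\sigma > 0$ and $-\cos\sigma > 0$, so $\beta > 0$, $\alpha > 0$, and in particular $\alpha \ne 0$; hence $\alpha\beta > 0$ and the existence criterion of Proposition \ref{propForIsoscelesLRE} is met. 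Substituting these into formula \eqref{NuforIsoscelesSigma}, the common factors $\sin^6\sigma$ and $1 + 8\cos^4\sigma$ cancel and I anticipate the clean result $\nu = 4\cos^2\sigma$. Because $\cos^2\sigma \in (0,1)$ throughout the interval, this yields $0 < \nu < 4$, which is precisely the second assertion; as $\sigma$ sweeps $(\pi/2,\pi)$ the value $\nu = 4\cos^2\sigma$ covers all of $(0,4)$. Note that on this line $\sigma_1 + \sigma_2 + \sigma_3 = 2\pi$, so the continuation meets the isosceles $ERE$ on the equator, which explains the name \emph{Euler coupling}.

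There is no genuine obstacle here beyond careful bookkeeping. The two things that require attention are: (i) fixing the sign of $\sin\sigma_3$ on $(\pi/2,\pi)$ so that the computation stays on the physical branch; and (ii) checking that the potential singular locus $\alpha = 0$ of \eqref{NuforIsoscelesSigma} is never reached on the interval, so that the closed form $\nu = 4\cos^2\sigma$ is valid throughout. Everything else is routine trigonometric simplification, and the first part of the statement is essentially a reinterpretation of earlier results.
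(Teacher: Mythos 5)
Your treatment of the second line is correct and is essentially the paper's own proof: the same sign discussion for $\sin\sigma_3$, the same factorizations $\alpha=-2\cos\sigma\sin^3\sigma\,(1+8\cos^4\sigma)$ and $\beta=\sin^3\sigma\,(1+8\cos^4\sigma)$, and the same cancellation in \eqref{NuforIsoscelesSigma} giving $\nu=4\cos^2\sigma<4$.

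The gap is in the first part. You read ``only $\sigma=\sigma_E$ satisfies the condition for $LRE$ with any $\nu$'' as: $\sigma_E$ is the only point where $\lambda_{23}=0$ holds for \emph{all} $\nu$ simultaneously, and you prove exactly that (an affine function of $\nu$ vanishes identically iff $\alpha=\beta=0$, whose only zero in $U_\textrm{phys}$ is $(\sigma_E,2\sigma_E)$). But the claim the paper proves, and later relies on, is stronger: for \emph{each fixed} $\nu>0$, the point $\sigma_E$ is the only solution on the line $\sigma_3=2\sigma$; no other point of that line is an isosceles $LRE$ for even one positive mass ratio. Your argument leaves open the possibility that some $\sigma\ne\sigma_E$ admits a single positive $\nu$ solving $\lambda_{23}=0$; were that so, the uniqueness of the bifurcation point in Proposition \ref{bifurcationLREandEREforPartialEqualMass}, and the statement that all isosceles $LRE$ continuations meet the plane $\sigma_3=\sigma_1+\sigma_2$ only at $(\sigma_E,2\sigma_E)$, would fail. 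The paper closes this by restricting $\alpha,\beta$ to the line: for $\sigma_3=2\sigma$, $0<\sigma<\pi/2$,
\begin{equation*}
\alpha = 2\sin^3(\sigma)\cos(\sigma)\big(8\cos^4(\sigma)-1\big),
\qquad
\beta = \sin^3(\sigma)\big(1-8\cos^4(\sigma)\big),
\end{equation*}
hence
\begin{equation*}
\alpha\beta = -2\cos(\sigma)\sin^6(\sigma)\big(8\cos^4(\sigma)-1\big)^2 < 0
\quad \text{for } \sigma\ne\sigma_E,
\end{equation*}
because $\cos\sigma>0$ there. By \eqref{NuforIsoscelesSigma} the mass ratio forced by such a point is negative, so no positive $\nu$ works, while $\alpha=\beta=0$ exactly at $\sigma=\sigma_E$. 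This short computation, parallel to the one you already did on the other line, is the missing half of your proof.
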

\begin{proof}
On the line $\sigma_3=2\sigma$,
$\alpha=2\sin^3(\sigma)\cos(\sigma)(8\cos^4(\sigma)-1)$
and $\beta=\sin^3(\sigma)(1-8\cos^4(\sigma))$.
Therefore, $\alpha\beta<0$ if $\sigma\ne \sigma_E$,
 and $\alpha=\beta=0$ if $\sigma= \sigma_E$.

On the other hand,
on the line $\sigma_3=2\pi-2\sigma$,
$\alpha=-2\cos(\sigma)\sin^3(\sigma)(1+8\cos^4(\sigma))$
and
$\beta=\sin^3(\sigma)(1+8\cos^4(\sigma))$.
Therefore $\alpha\beta>0$ because $\pi/2<\sigma<\pi$.
For this case, $\nu=4\cos^2(\sigma)<4$.
This shape is the $ERE$ on the equator.
And the inequality $\nu<4$ is exactly the same as the condition (\ref{ConditionEREonEquator})
for  $ERE$ on the equator.
\end{proof}

\subsubsection{Bifurcation point between isosceles $LRE$ and isosceles $ERE$}
\label{bifurcationIsoscelesLREandERE}
\begin{proposition}\label{bifurcationLREandEREforPartialEqualMass}
The shape $(\sigma_E,2\sigma_E)$ is the unique
bifurcation point between isosceles $LRE$ and isosceles $ERE$.
\end{proposition}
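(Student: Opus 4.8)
The plan is to prove the two assertions --- that $(\sigma_E,2\sigma_E)$ is a bifurcation point, and that it is the only one --- by reducing both to the sign behaviour of $\alpha$ and $\beta$ along the collinear line $\sigma_3=2\sigma$. This line is the locus $\sigma_3=\sigma_1+\sigma_2$ on which isosceles shapes degenerate to collinear ones, so by Definition \ref{defBifurcationPoint} any bifurcation point with a meridian $ERE$ must lie on it.

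For existence, I would verify the two defining conditions at the candidate shape. First, the computation preceding Proposition \ref{propForIsoscelesLRE} gives $\alpha(\sigma_E,2\sigma_E)=\beta(\sigma_E,2\sigma_E)=0$, so the $LRE$ relation $\lambda_{23}=-\nu\alpha/\sin^3(\sigma_3)+\beta/\sin^3(\sigma)$ vanishes for every $\nu>0$; this is exactly Proposition \ref{propisoscelesEREforAnyNu}, so the shape is an isosceles $LRE$. Second, to confirm it is a meridian $ERE$ I would substitute $\sigma_1=\sigma_2=\sigma$, $\sigma_3=2\sigma$ and $m_1=m_2$ into the Euler condition (\ref{eqForEREI}): the first numerator then carries the factor $m_1-m_2$, while the other two fractions share a common denominator and differ by a multiple of $(m_2-m_1)\sin(2\sigma_3)$, so $d\equiv 0$ identically in $\sigma$ along $\sigma_3=2\sigma$. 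Thus the whole line consists of isosceles $ERE$, and in particular $(\sigma_E,2\sigma_E)$ is one; combined with the $LRE$ property this makes it a bifurcation point.

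For uniqueness, I would take an arbitrary bifurcation point: it lies on $\sigma_3=2\sigma$ and is an isosceles $LRE$ for some admissible $\nu>0$. Away from the degeneracy, (\ref{NuforIsoscelesSigma}) forces $\nu=\sin^3(\sigma_3)\beta/(\sin^3(\sigma)\alpha)$, so positivity of $\nu$ demands $\alpha\beta>0$. But the evaluation in Proposition \ref{eulerEndPoints} gives $\alpha=2\sin^3(\sigma)\cos(\sigma)(8\cos^4\sigma-1)$ and $\beta=\sin^3(\sigma)(1-8\cos^4\sigma)$ on this line, so $\alpha\beta=-2\sin^6(\sigma)\cos(\sigma)(8\cos^4\sigma-1)^2\le 0$ throughout the physical range $0<\sigma<\pi/2$, with equality exactly when $8\cos^4\sigma=1$, i.e.\ $\sigma=\sigma_E$. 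Hence no point with $\sigma\neq\sigma_E$ can carry a positive mass ratio, and the bifurcation point must be $(\sigma_E,2\sigma_E)$.

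The subtle step I expect to watch is the logical status of the double zero $\alpha=\beta=0$. Everywhere else on $\sigma_3=2\sigma$ the relation (\ref{NuforIsoscelesSigma}) pins down a single $\nu$, but at $(\sigma_E,2\sigma_E)$ the condition $\lambda_{23}=0$ holds identically in $\nu$, so this shape sits on every isosceles $LRE$ continuation at once. I would make explicit that it is precisely this identical vanishing --- rather than a transversal crossing of two separate curves --- that couples the $LRE$ and meridian $ERE$ branches, matching the implicit-function-theorem reading recorded at the close of Section \ref{shapes}; the remainder is just the sign bookkeeping for $\alpha\beta$ above.
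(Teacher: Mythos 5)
Your uniqueness argument is sound and is essentially the paper's: on the line $\sigma_3=2\sigma$ one has $\alpha=-2\cos(\sigma)\,\beta$, so $\alpha\beta=-2\sin^6(\sigma)\cos(\sigma)\bigl(8\cos^4(\sigma)-1\bigr)^2\le 0$ on the physical range $0<\sigma<\pi/2$, and since equation (\ref{NuforIsoscelesSigma}) needs $\alpha\beta>0$ (or $\alpha=\beta=0$) to admit $\nu>0$, only $\sigma=\sigma_E$ survives; this is exactly the content of Proposition \ref{eulerEndPoints}, on which the paper's proof also rests. Likewise your verification that $d\equiv 0$ along the symmetric collinear line is correct and corresponds to what the paper treats as obvious, namely that the isosceles $ERE$ continuation through $p_E$ exists for every $\nu>0$.

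The gap is in the existence half. Definition \ref{defBifurcationPoint} requires that a \emph{continuation} of $LRE$ --- a curve of genuine, non-collinear Lagrange shapes --- and a continuation of $ERE$ coincide at the point. You only verify that the single point $p_E=(\sigma_E,2\sigma_E)$ satisfies $\lambda_{23}=0$ for every $\nu$ (Proposition \ref{propisoscelesEREforAnyNu}); but $p_E$ itself is collinear, hence an $ERE$ and not an $LRE$, and satisfying the equation at one boundary point does not by itself produce a curve of $LRE$ limiting to it. For a fixed $\nu$, the zero set of $\lambda_{23}$ near $p_E$ could a priori be an isolated point, or a curve tangent to the line $\sigma_3=2\sigma$ lying entirely in the unphysical region $\sigma_3>2\sigma$; in either case no $LRE$ continuation would reach $p_E$ and it would not be a bifurcation point in the sense of Definition \ref{defBifurcationPoint}. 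This is precisely the step the paper supplies: it computes $\nabla\lambda_{23}|_{p_E}=\bigl(-\nu\nabla\alpha/\sin^3(\sigma_3)+\nabla\beta/\sin^3(\sigma)\bigr)|_{p_E}$ explicitly, shows both components are nonzero (indeed positive) for every $\nu>0$, and invokes the implicit function theorem to obtain a solution curve through $p_E$; since $p_E$ is the only solution on the line $\sigma_3=2\sigma$, that curve is not contained in the line and so enters the region of genuine triangles. Your closing remark that the coupling occurs ``rather than [by] a transversal crossing'' inverts the logic: the identical vanishing in $\nu$ explains why all the isosceles $LRE$ continuations share the point $p_E$, but the nonzero-gradient/implicit-function-theorem computation is what proves that any of them actually reaches it. To repair the proof you must add this nondegeneracy argument.
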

\begin{proof}
In the previous Proposition we have already proved that
the shape $p_E=(\sigma_E,2\sigma_E)$ is the Euler 
coupling of isosceles $LRE$.
Obviously, the shape $p_E$ is an $ERE$ on the rotating meridian, and then, continuation from this shape of isosceles $ERE$ for any $\nu > 0 $ does exist.

The proof of the existence of the continuation of a $LRE$ for any $\nu$
is given by showing that $\nabla\lambda_{23}|_{p_E}\ne 0$.
In fact,
\begin{equation}
\begin{split}
&\nabla\lambda_{23}|_{p_E}
=\left.\left(-\frac{\nu\nabla\alpha}{\sin^3(\sigma_3)}
	+\frac{\nabla\beta}{\sin^3(\sigma)}\right)\right|_{p_E}\\
&=\left(
	\frac{2^{5/4}\Big(1+\sqrt{2}+\nu(\sqrt{2}-1)\Big)}{\sqrt{4-\sqrt{2}}},
	\frac{2^{1/4}\Big(3(\sqrt{2}-1)+\nu(5-2\sqrt{2})\Big)}{\sqrt{4-\sqrt{2}}}
	\right)\\
&\ne 0 \mbox{ for any } \nu>0.
\end{split}
\end{equation}
By the implicit function theorem,
there is a continuation of $LRE$ from $p_E$.
\end{proof}

%\textcolor{red}{The results of this subsection are summarised
%in the Figure \ref{figRegionDefinedByAlphaBeta}.}

\subsubsection{Isosceles $LRE$ for the restricted three-body problem}
\label{isoscelesLREforRestrictedProblem}
Here we consider the isosceles $LRE$ 
in the restricted problem
with $m_1=m_2$ finite and $m_3\to 0$.
The size dependence will be interesting.

The isosceles $LRE$ for this limit are represented by
the $\nu \to \infty$ curve in  Figure \ref{figRegionDefinedByAlphaBeta},
since $\nu=m_1/m_3=m_2/m_3$.
As we can see,
there are two curves where $\nu \to \infty$,
one connects $(0,0)$ and $(\pi/2,0)$,
and the other connects $(\sigma_E,2\sigma_E)$ and $(\pi/2,\pi)$.

Now, trace the isosceles $LRE$ with respect to $\sigma=\sigma_1=\sigma_2 \in (0,\pi/2)$.
For sufficiently small $\sigma$,
there is one $LRE$ shape that is almost equilateral.
Note that one shape in $(\sigma, \sigma_3)$ represents
four $LRE$ configurations,
two for orientations of the triangle,
and two for the places near the north pole or south pole.
So, there are four $LRE$ configurations for $0<\sigma<\sigma_E$.
%
%As $\sigma$ larger, the shape differs from equilateral.
Then at $\sigma=\sigma_E$,
new $LRE$ shape bifurcated from $(\sigma_E,2\sigma_E)$.
So, there are eight $LRE$ configurations for $\sigma_E<\sigma<\pi/2$.

With respect to $\sigma_3 \in (0,\pi/2)$,
the situation is more complex.
Note that the graph $\alpha(\sigma,\sigma_3)=0,$
takes the local minimum and maximum for $\sigma_3$ at $\sigma=\pi/3$,
where the minimum and maximum value of $\sigma_3$
are the solution of 
$\alpha(\pi/3, \sigma_3)
=(1+\cos(\sigma_3))\sin^3(\sigma_3)=(\sqrt{3}/2)^3$.
The solutions are
$\sigma_3=\sigma_s=0.81...$ (the local maximum),
and $\sigma_\ell=1.84...$ (the local minimum).
So, the range $\sigma_3\in (0,\pi/2)$
is divided into four pieces
by the three values 
$\sigma_3=\sigma_s, \sigma_\ell,2\sigma_E$.
As shown in Figure \ref{figRegionDefinedByAlphaBeta},
in the interval $\sigma_3 \in (\sigma_s,\sigma_\ell)$,
there are no isosceles $LRE$ for the restricted problem.

\subsection{Bifurcation points between isosceles $LRE$ and scalene $LRE$}
\label{isoscelesToScalene}
In this subsection,
we will show the existence of the bifurcations 
between isosceles $LRE$ with $\sigma_1=\sigma_2$ and scalene $LRE$.
As described in the section \ref{shapes},
the bifurcation point of this type
is the intersection of three surfaces,
$\sigma=\sigma_1=\sigma_2$,
$\tilde\lambda_{12}=0$,
and $\lambda_{23}=0$.

Now, since
\begin{equation}
\tilde{\lambda}_{12}(\sigma,\sigma,\sigma_3)
=\sin^2(\sigma)\Big(
	6\nu \sin^3(\sigma)\cos(\sigma)\cos(\sigma_3)
	+(1+2\cos(2\sigma))\sin^3(\sigma_3)
	\Big),
\end{equation}
we obtain that $\tilde{\lambda}_{12}(\sigma,\sigma,\sigma_3)=0$
is equivalent to
\begin{equation}\label{tildeLambda12atIsosceles}
%\tilde{\tilde\lambda}_{12}
%=
6\nu \sin^3(\sigma)\cos(\sigma)\cos(\sigma_3)
	+(1+2\cos(2\sigma))\sin^3(\sigma_3)
=0.
\end{equation}

On the other hand, $\lambda_{23}(\sigma,\sigma,\sigma_3)=0$
determines $\nu$ by equation (\ref{NuforIsoscelesSigma}).
Substituting this $\nu$ into the equation (\ref{tildeLambda12atIsosceles}),
we get the equation for the set of 
$\tilde\lambda_{12}=\lambda_{23}=0$
and 
$\sigma=\sigma_1=\sigma_2$,
\begin{equation}
\begin{split}
j(\sigma,\sigma_3)
&=(1+2\cos(2\sigma))\sin^3(\sigma_3)
+\frac{6\cos(\sigma)\cos(\sigma_3)
			(\sin^3(\sigma)-\cos(\sigma)\sin^3(\sigma_3))}
	{1+\cos(\sigma_3)-2\cos(\sigma)\sin^3(\sigma)/\sin^3(\sigma_3)}\\
&=0.
\end{split}
\end{equation}

Unfortunately, we don't have a proof that
almost all points on the curve $j(\sigma,\sigma_3)=0$ have a continuation of scalene $LRE$.
But we are able to give a proof for two points.
\begin{example}\label{example1}
The points $p_a=(\pi/3,\pi/2)$ and $p_b=(2\pi/3,\pi/2)$
are bifurcation points between isosceles and scalene $LRE$. 
\end{example}
\begin{proof}
It is not difficult to verify that the points $p_a$ and $p_b$ satisfy the condition $j=0$. In fact 
the point $p_a$ corresponds to an isosceles $LRE$ for $\nu=8(36-5\sqrt{3})/333$.
For  $\nabla=(\partial_{\sigma_1},\partial_{\sigma_2},\partial_{\sigma_3})$,
the vector
$\nabla \tilde\lambda_{12}(\sigma_1,\sigma_2,\sigma_3) \times \nabla\lambda_{23}(\sigma_1,\sigma_2,\sigma_3)
=3\sqrt{3}\,\,\nu/8\, (1,-1,0)$ 
indicates the scalene direction,
and the vector 
$\nabla (\sigma_1-\sigma_2)\times \nabla\lambda_{23}
=(-\nu,-\nu,8/3)$ 
indicates 
the isosceles direction.
By the implicit function theorem,
there are continuation of scalene $LRE$ 
and isosceles $LRE$ from $p_a$.

Similarly, $p_b$ corresponds to an isosceles $LRE$ for 
$\nu=8(36+5\sqrt{3})/333$.
In this case the corresponding vectors are 
$\nabla \tilde\lambda_{12}\times \nabla\lambda_{23}
=3\sqrt{3}\,\,\nu/8\, (-1,1,0)$
and
$\nabla (\sigma_1-\sigma_2)\times \nabla\lambda_{23}
=(-\nu,-\nu,8/3)$.
\end{proof}

In subsection \ref{isoscelesAndScaleneNumerical},
we will give a numerical result
that shows that
there are continuation of scalene $LRE$ from 
the points on $j=0$ except for three exceptional points.

\subsection{Numerical results}
In this subsection we present numerical simulations which show some continuations of $RE$.

\subsubsection{Isosceles $LRE$}

\begin{figure}[h]
   \centering
   \includegraphics[width=5in]{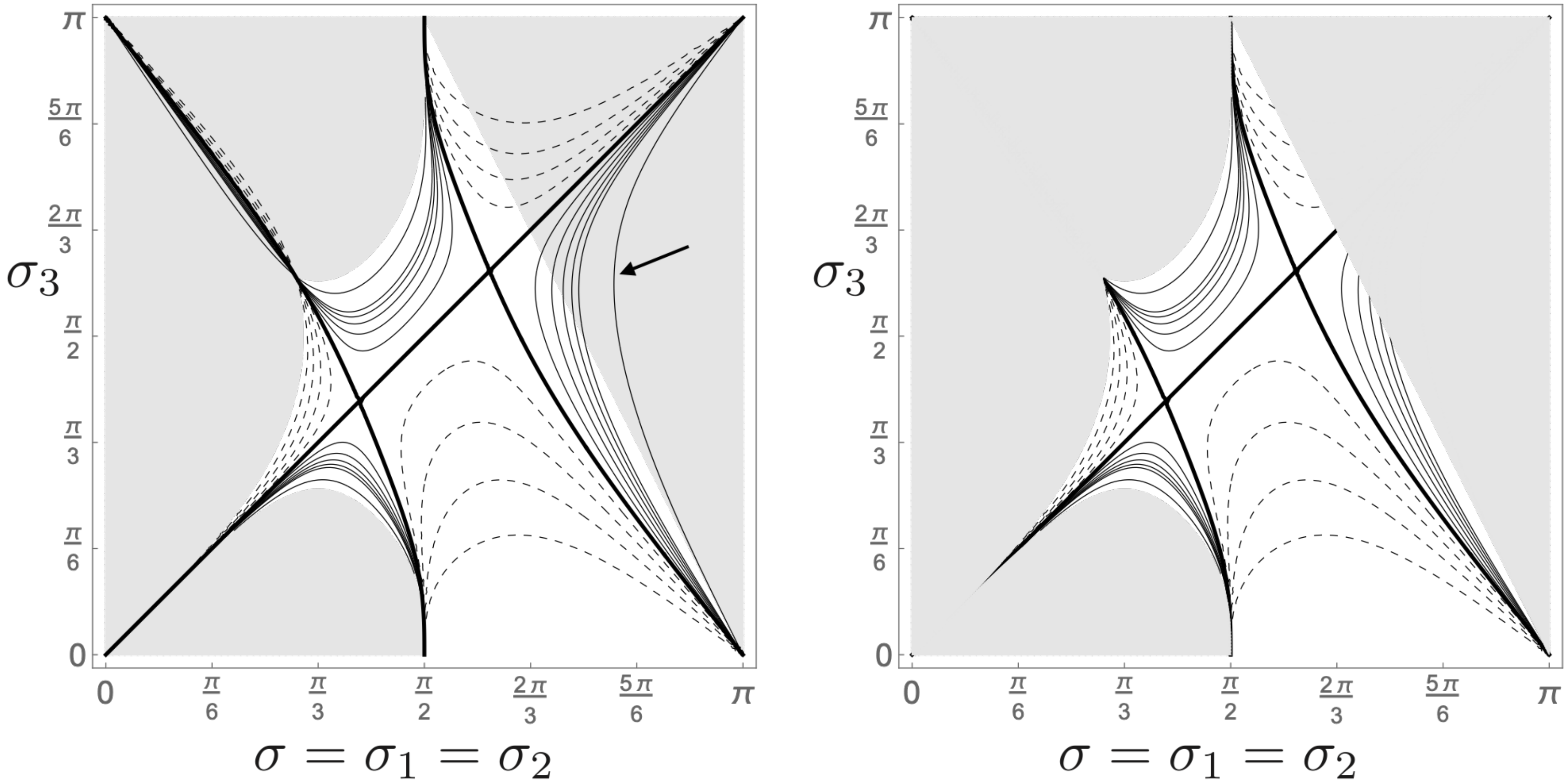}%{fig3New20240318partiallEqualMassesIsoscelesG.pdf}
   \caption{Contours for 
   $\nu=$ 
   a positive constant
   in the $(\sigma,\sigma_3)$ plane.
   The left and right pictures represent the contours
   in $U$ and $U_\textrm{phys}$
   respectively.
   The thick contour represents $\nu=1$.
   The dashed contours are for $\nu=0.2,0.4,...,0.8$,
   and solid  contours are for $\nu=1.2,1.4,...,1.8,2,4$.
   Note that the rightmost contour for $\nu=4$
   (pointed by an arrow)
   in $U$ is outside of 
   $U_\textrm{phys}$.
   }
   \label{figContourForNu}
\end{figure}
The result of the numerical calculations are shown in  Figure
\ref{figContourForNu}.
The  contours represent curves for $\nu$ a positive constant.
The white region in this figure represents $\nu>0$.
The gray region is $\nu<0$ or outside of $U_\textrm{phys}$.
Every  contour with $\nu\ne 1$ 
has a unique continuation from an edge to another edge of 
$U$.

The \olre\  continuations
for $\nu>0$ are emerging from the origin
as showed above,
it looks close to a straight line.
The end points of this continuation are
$(\sigma,\sigma_3)=(\sigma_E,2\sigma_E)$ for $\nu<1$,
and $(\sigma,\sigma_3)=(\pi/2,0)$ (binary collision) for $\nu>1$.

Figure \ref{figContourForNu} shows that
the continuation of $LRE$ for any $\nu>0$ are emerging
form $(\sigma_E,2\sigma_E)$,
which is the unique bifurcation point
between $LRE$ and $ERE$ for $m_1=m_2$ case.

In Figure \ref{figContourForNu},
we can see  the Euler 
couplings
on the line $2\sigma+\sigma_3=2\pi$,
which is the $ERE$ on the equator.
Note that the curve $\nu=4$ 
(the rightmost solid curve
in the left side of Figure \ref{figContourForNu})
is outside of $U_\textrm{phys}$,
as shown in Proposition \ref{eulerEndPoints}.
All the other end points
$(\pi/2,0)$, $(\pi,0)$ and $(\pi/2,\pi)$ correspond to collision.

\subsubsection{Bifurcation between isosceles $LRE$ and scalene $LRE$}
\label{isoscelesAndScaleneNumerical}
\begin{figure}
   \centering
   \includegraphics[width=12cm]{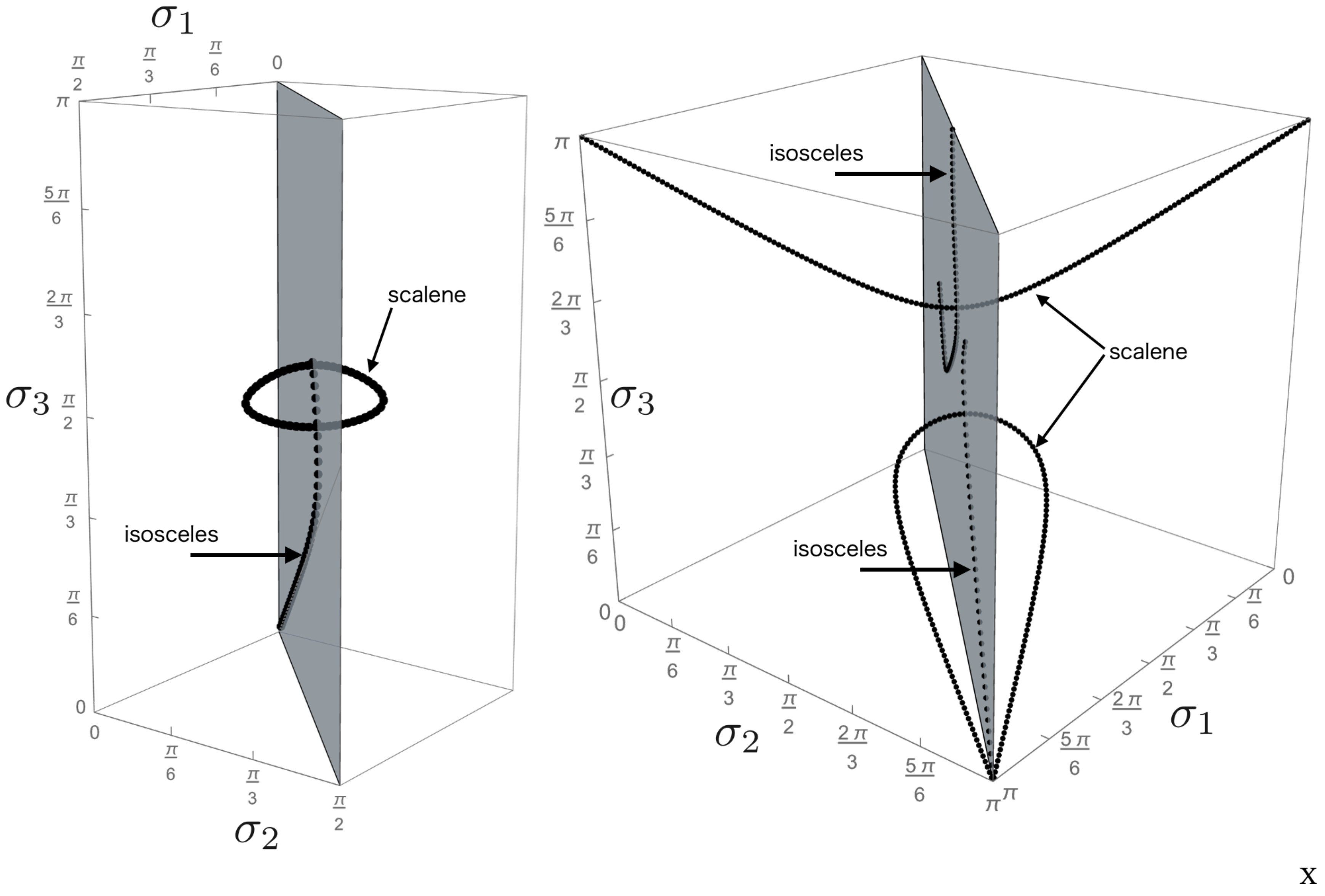}
   \caption{Two examples for bifurcation of isosceles $LRE$ and scalene $LRE$
   for $m_1=m_2\ne m_3$.
   The dotted curves represent the continuation of isosceles $LRE$.
   The solid curves represent the continuations of scalene $LRE$.
   The grey plane is $\sigma_1=\sigma_2$,
   where the isosceles continuations lay on.
   The scalene continuations intersect this plane.
   Left: For $\nu=8(36-5\sqrt{3})/333$, two bifurcation points exist:
   $(\sigma_1,\sigma_2,\sigma_3)=(\pi/3,\pi/3,\pi/2)$,
   and $(0.942...,0.942...,1.850...)$.
   The scalene curve is a loop, and it crosses the isosceles curve
   at two points.
   The isosceles curve is 
   the \olre\ continuation.
   Right:
   For $\nu=8(36+5\sqrt{3})/333$,
   we  also have two bifurcation points
   of isosceles and scalene,
   at $(2\pi/3,2\pi/3,\pi/2)$, and
   $(1.764...,1.764...,2.078...)$.
   For this mass ratio, however, as shown in this picture,
   the bifurcation points are on separate curves.
   }
   \label{figBifurcationOfIsoscelesAndScalene}
\end{figure}

Figure \ref{figBifurcationOfIsoscelesAndScalene} shows
two examples of bifurcations between
isosceles $LRE$ and scalene $LRE$
for $\nu=8(36\pm 5\sqrt{3})/333$
which were described in Example \ref{example1}.
For each mass ratio, two bifurcation pints exists.
For $\nu=8(36-\sqrt{3})/333$,
the continuation of scalene $LRE$ is a closed loop
as shown in the left side of figure \ref{figBifurcationOfIsoscelesAndScalene}.
The loop intersects one continuation curve of the isosceles continuation twice.
This isosceles continuation is 
the \olre\ continuation.
For $\nu=8(36+\sqrt{3})/333$, we also get two bifurcation points.
In this case, two scalene continuation curves intersect
two different continuation of isosceles curves.
See the right side of Figure \ref{figBifurcationOfIsoscelesAndScalene}.

Figure \ref{figBifurcationPointToScalene}
shows the global structure of the bifurcation point of
this type.
The points on the curve $j=0$  
give the bifurcation points
if $c=\nabla \tilde\lambda_{12}\times \nabla\lambda_{23}\ne 0$.
The curve $c=0$ is also shown in the same figure.
\begin{figure}
   \centering
   \includegraphics[height=7cm]{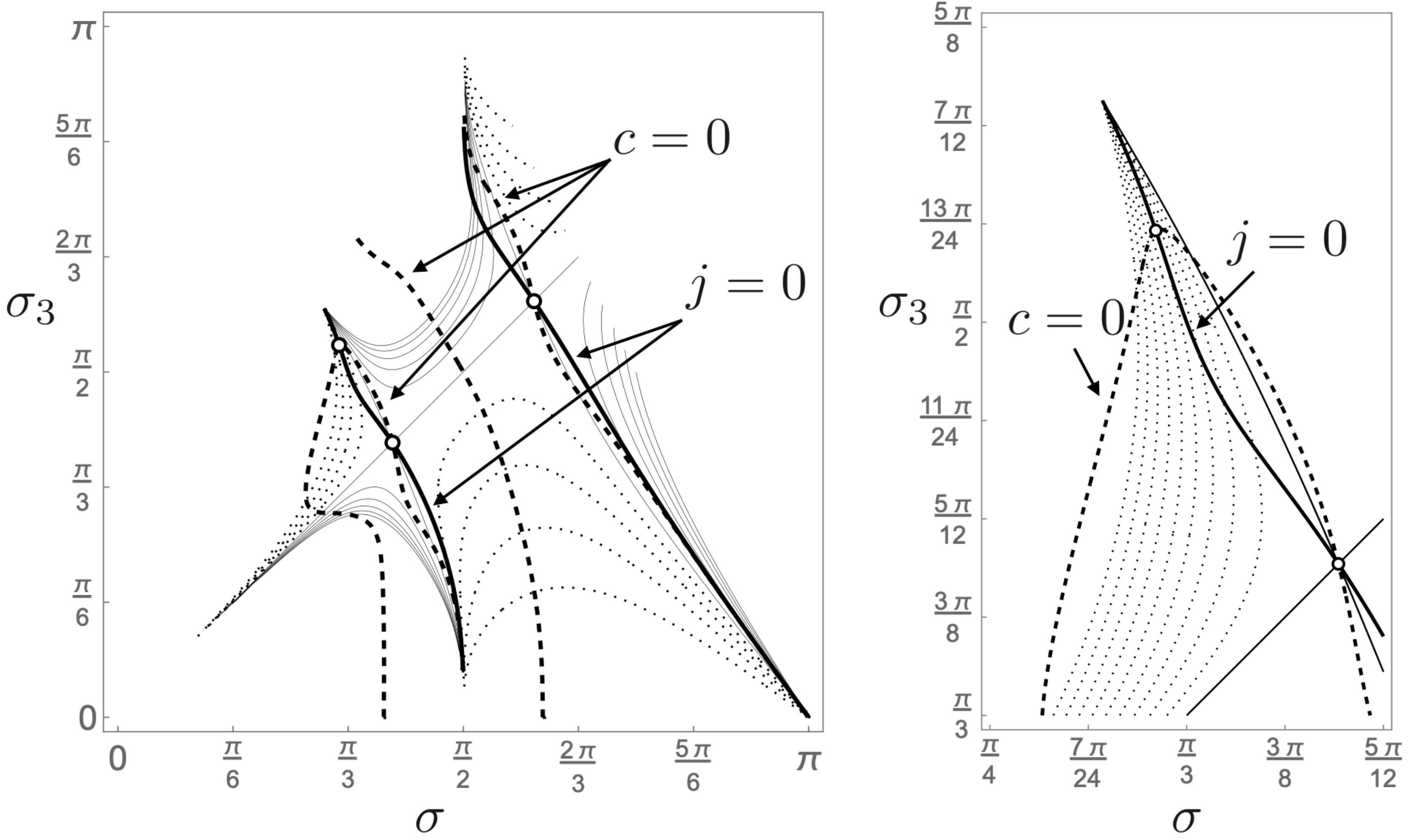}
   \caption{The bifurcation points of
   isosceles $LRE$ ($\sigma=\sigma_1=\sigma_2$)
   and scalene $LRE$.
   The thick curves represent the set of bifurcation points,
   $j=0$.
   Thin dotted  or  solid curves represents $\nu=$ constant contour.
   The points on $j=0$ are the bifurcation point
   if $c=\nabla \tilde\lambda_{12}\times \nabla\lambda_{23}\ne 0$.
   The curve $c=0$ is shown by the thick dashed curves.
   Left: The global view.
   %There are three points that
   The three points represented by the hollow circle 
   satisfy $j=c=0$ in $U_\textrm{phys}$.
   Right: A close-up view.
   }
   \label{figBifurcationPointToScalene}
\end{figure}

We can see in this figure,
that the bifurcations occur for $|\nu-1|$ sufficiently small. That is, 
the bifurcation from 
\olre\ continuation
is only possible 
if $\nu=1-\delta$ with 
sufficiently small $\delta>0$.
In the following we will show numerically how small could be  $\nu$.

On the cross point of $j=0$ and $c=0$ the bifurcation doesn't occur,
namely, no continuation of scalene solutions exists.
Figure \ref{figBifurcationPointToScalene} shows that
there are three such points.
The points $(\sigma,\sigma_3)=(\sigma_c,\sigma_c)$,
$(\pi-\sigma_c,\pi-\sigma_c)$, correspond to the bifurcation points 
for equilateral and isosceles $LRE$ for $\nu=1$. The other point 
$(\sigma,\sigma_3)=(0.3202...\pi,0.5388...\pi)$ is new.
The mass ratio at this point is $\nu_0=0.6039...$.
Numerical calculations suggest that this point is the lower bound for $\nu$, where we can find bifurcation to scalene $LRE$ from the
\olre\ continuation.
In other words,
the bifurcation of this type occurs for $\nu_0<\nu<1$.
The reason is the following.
At $\nu=8(36-\sqrt{3})/333$, the scalene continuation is a loop.
This means that 
two surfaces of $\tilde\lambda_{12}=0$ and $\lambda_{23}=0$
intersect, and the intersection curve is a loop.
Numerical calculation shows that
smaller $\nu$  makes smaller loop,
and at the limit $\nu\to\nu_0$ the loop becomes just a point
$(0.3202...\pi,0.5388...\pi)$.
Namely, the two surfaces just touch at this point.
Thus the lower bound for $\nu$ in order to have this kind of bifurcation is
$\nu_0=0.6039...$.

%%%%%%%%%%%%%%%%%%%%%%%%%%%%%%%%%%%%%%%%%%%%%%%%%%%%%%%%%%%%%
%%%%%%%%%%%%%%%%%%%%%%%%%%%%%%%%%%%%%%%%%%%%%%%%%%%%%%%%%%%%%%

\section{$LRE$ with general masses}\label{general masses}
In this section, we will tackle the case of general masses. Remember that if  the three masses are not equal, by Corollary \ref{unequalMassesMakeScalene}, the shapes for 
the $LRE$ are scalene triangles.

\subsection{Continuation from Lagrangian equilateral $RE$ on $\mathbb{R}^2$ to $LRE$ on $\mathbb{S}^2$}\label{sectionLagrangianEquilateral}
Let 
$s_k$ be the arc length between the masses $m_i$ and $m_j$,
where $(i,j,k)=(1,2,3),(2,3,1),(3,1,2)$.
Then $\sigma_k=s_k/R$, where $R$ is the radius of $\mathbb{S}^2$.
The Euclidean limit where the Lagrange equilateral solution exists
is achieved by taking $R \to \infty$.
\begin{proposition}\label{PropCont2}
The Lagrange equilateral solution when $R$ goes to infinity exists,
and the continuation to finite $R$ also exists.
\end{proposition}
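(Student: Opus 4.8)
The plan is to resolve the singularity of the shape space at the Euclidean limit by introducing size--shape coordinates, and then to apply the implicit function theorem in the two shape variables with the size as a parameter. Concretely, I would set $\sigma_1=\epsilon$, $\sigma_2=\epsilon\,y_2$, $\sigma_3=\epsilon\,y_3$, so that $\epsilon\to 0$ corresponds to the Euclidean limit $R\to\infty$ (recall $\sigma_k=s_k/R$), while $(y_2,y_3)$ are ratios of arc lengths that record the shape. In these coordinates the Lagrange equilateral configuration is exactly $(y_2,y_3)=(1,1)$, independently of $\epsilon$.

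First I would check that the two conditions $\lambda_{12}=0$ and $\lambda_{23}=0$ extend to smooth functions of $(\epsilon,y_2,y_3)$ across $\epsilon=0$. This holds because each quotient $\sin(\sigma_i)/\sin(\sigma_j)$ occurring in \eqref{eqForLagrangeI} equals $\sin(\epsilon y_i)/\sin(\epsilon y_j)$, which is an even, analytic function of $\epsilon$ with value $y_i/y_j$ at $\epsilon=0$ (near $y_j=1$), and because $\cos(\sigma_k)\to 1$. Writing $G(\epsilon,y_2,y_3)=(\lambda_{12},\lambda_{23})$, the value at $\epsilon=0$ is governed by
\[
\lambda_{12}^{(0)}=(m_2+m_3)-m_2 y_2^3-m_3 y_3^3-(m_3+m_1)+m_3\frac{y_3^3}{y_2^3}+\frac{m_1}{y_2^3},
\]
together with the analogous expression for $\lambda_{23}^{(0)}$. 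Evaluating at $(y_2,y_3)=(1,1)$ gives $G(0,1,1)=(0,0)$ for arbitrary positive masses, which is exactly the statement that the Lagrange equilateral solution exists in the limit $R\to\infty$.

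The continuation to finite $R$ is then the content of the implicit function theorem (as set up in Section \ref{shapes}) applied to $G=0$ in the variables $(y_2,y_3)$ with $\epsilon$ as parameter. The main computation is the $2\times2$ Jacobian at $(\epsilon,y_2,y_3)=(0,1,1)$; differentiating the two rational expressions in $(y_2,y_3)$ yields
\[
\left.\frac{\partial(\lambda_{12}^{(0)},\lambda_{23}^{(0)})}{\partial(y_2,y_3)}\right|_{(1,1)}
=\begin{pmatrix} -3M & 0 \\ 3M & -3M \end{pmatrix},
\]
whose determinant is $9M^2\neq 0$ because $M=\sum_\ell m_\ell>0$. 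Hence there is a unique smooth branch $\epsilon\mapsto(y_2(\epsilon),y_3(\epsilon))$ with $(y_2(0),y_3(0))=(1,1)$ defined for all sufficiently small $\epsilon>0$, that is, for all sufficiently large finite $R$. Since the strict triangle inequalities hold at the equilateral point, this branch stays inside $U_\textrm{phys}$ for small $\epsilon$, so it is a genuine family of $LRE$ shapes; for unequal masses $(y_2(\epsilon),y_3(\epsilon))$ moves away from $(1,1)$, recovering the known fact that the equilateral shape is not preserved along the continuation.

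I expect the only real obstacle to be the clean evaluation of this leading-order Jacobian: one must expand the quotients $\sin^3(\sigma_i)/\sin^3(\sigma_j)$ to order $\epsilon^0$ and differentiate at $(1,1)$. The matrix turning out lower triangular with diagonal $-3M$ makes nonsingularity transparent, but verifying the off-diagonal structure—in particular that $\partial_{y_3}\lambda_{12}^{(0)}=0$ at $(1,1)$—is the step that must be carried out with care. Everything else, namely the smoothness of $G$ across $\epsilon=0$ and the final appeal to the implicit function theorem, is routine.
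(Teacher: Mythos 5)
Your proof is correct: I checked your limiting expressions $\lambda_{12}^{(0)}$, $\lambda_{23}^{(0)}$, their vanishing at $(y_2,y_3)=(1,1)$, and the Jacobian $\left(\begin{smallmatrix}-3M&0\\ 3M&-3M\end{smallmatrix}\right)$, and all of it holds up. In essence you follow the same route as the paper — desingularize the triple-collision/Euclidean limit by scaling, solve the limiting equations to get the equilateral shape, and apply the implicit function theorem — but the packaging differs enough to compare. The paper keeps all three variables $\sigma_k=s_k/R$, shows the limiting conditions force $s_1=s_2=s_3=s$, and verifies the criterion of Section~\ref{shapes}, namely $R^{-2}\nabla\lambda_{12}\times\nabla\lambda_{23}\to \frac{9M^2}{s^2}\,(1,1,1)\neq 0$, which is the intersection-curve form of the implicit function theorem in the three-dimensional $\sigma$-space; you instead quotient out the scale with the blow-up $(\sigma_1,\sigma_2,\sigma_3)=(\epsilon,\epsilon y_2,\epsilon y_3)$ and use the parametrized form in the two shape variables. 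The two nondegeneracy checks are the same computation in disguise: since $\partial_{y_i}=\epsilon\,\partial_{\sigma_i}$, your determinant equals $\epsilon^2$ times the first component of $\nabla\lambda_{12}\times\nabla\lambda_{23}$, and with $\epsilon=s/R$ this tends to $s^2\cdot 9M^2/s^2=9M^2$, exactly the value you obtained. As for what each buys: the paper's formulation stays inside the framework it uses throughout (continuations as transversal intersections of the surfaces $\lambda_{ij}=0$) and directly exhibits the tangent direction $(1,1,1)$, i.e.\ that the continuation leaves the limit along the scaling ray; your formulation is more careful precisely where the paper is informal, because the limit point is a collision not belonging to $U$, so applying the theorem ``in the limit'' genuinely requires the smooth (even-in-$\epsilon$, analytic) extension of the equations across $\epsilon=0$ that you spell out, and it returns an honest parametrized branch $\epsilon\mapsto(y_2(\epsilon),y_3(\epsilon))$ whose membership in $U_\textrm{phys}$ you can, and do, verify.
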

\begin{proof}
The limit $R\to \infty$ of 
the conditions $\lambda_{12}=\lambda_{23}=0$
are
\begin{equation}
\frac{(s_1^3-s_2^3)\sum_k m_k s_k^3}{s_1^3s_2^3}
=\frac{(s_2^3-s_3^3)\sum_k m_k s_k^3}{s_2^3s_3^3}
=0.
\end{equation}
The solution is $s=s_k$, $k=1,2,3$, 
which corresponds to the equilateral $LRE$.

Then, for $R\to \infty$,
\begin{equation}
\left.R^{-2}\nabla \lambda_{12}\times \nabla \lambda_{23}
\right|_{\sigma_k=s/R}
\to 
\frac{9(m_1+m_2+m_3)^2}{s^2}\,\,(1,1,1)
\ne 0.
\end{equation}
By the implicit function 
theorem,
the continuation of equilateral $LRE$ to finite $R$ exists.
\end{proof}

The above result was first proved in \cite{Bengochea}, by using a different approach.

\begin{proposition}\label{PropCont3}
The continuation of equilateral $LRE$ in $\mathbb{R}^2$ to $\mathbb{S}^2$ with finite $R$
has $\sigma_i<\sigma_j<\sigma_k$ if $m_i<m_j<m_k$.
\end{proposition}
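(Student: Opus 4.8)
The key observation is that, for fixed pairwise distinct masses, the statement reduces to a single sign computation. Along the continuation the masses $m_1,m_2,m_3$ are held fixed and distinct (this is the scalene regime of Corollary~\ref{unequalMassesMakeScalene}), so Proposition~\ref{equalSigmaNeedsEqualMass} forbids $\sigma_i=\sigma_j$ whenever $m_i\neq m_j$; that is, the continuation never meets any diagonal plane $\sigma_i=\sigma_j$. Since the continuation produced in Proposition~\ref{PropCont2} is a single connected curve emanating from the Euclidean equilateral limit, each difference $\sigma_i-\sigma_j$ keeps a \emph{constant} sign along the whole curve. Hence it suffices to determine that sign at one convenient point, and the natural choice is the Euclidean end $R\to\infty$, where $\sigma_1,\sigma_2,\sigma_3\to 0$.

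\textbf{Sign at the Euclidean limit.} To fix the sign near the limit I would expand the conditions $\lambda_{12}=\lambda_{23}=0$ about the equilateral configuration $\sigma_1=\sigma_2=\sigma_3=\rho$ with $\rho\to 0^+$. The first Property supplies the ``source'' value on the diagonal: on $\sigma_i=\sigma_j$ one has $\lambda_{ij}=(m_i-m_j)(\cos\sigma_k-1)$, which for $\sigma_k\in(0,\pi)$ is strictly negative exactly when $m_i>m_j$. A short computation of the gradient at the equilateral point gives $\partial_{\sigma_i}\lambda_{ij}=\tfrac{3\cos^2\rho}{\sin\rho}M-m_k\sin\rho>0$ and $\partial_{\sigma_j}\lambda_{ij}=-\partial_{\sigma_i}\lambda_{ij}<0$ for small $\rho$, with $M=m_1+m_2+m_3$, while the $\sigma_k$-derivative is of lower order. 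Thus to pass from the diagonal, where $\lambda_{ij}<0$ for $m_i>m_j$, onto the solution set $\lambda_{ij}=0$ one must increase $\sigma_i$ relative to $\sigma_j$, forcing $\sigma_i>\sigma_j$. Carrying the expansion one further order yields the explicit leading asymptotics $\sigma_i-\sigma_j=\tfrac{m_i-m_j}{6M}\,\rho^3+O(\rho^4)$, in which the sign is manifest and confirms $\sigma_i<\sigma_j<\sigma_k$ when $m_i<m_j<m_k$.

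\textbf{Conclusion.} Combining the two steps finishes the proof: the ordering holds near the Euclidean limit by the expansion, and since no $\sigma_i-\sigma_j$ can vanish along the connected continuation (Proposition~\ref{equalSigmaNeedsEqualMass}), the ordering persists for all finite $R$ on that continuation.

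\textbf{Main obstacle.} I expect the delicate part to be the sign analysis at the limit, not the propagation, which is clean once connectedness and the non-vanishing of $\sigma_i-\sigma_j$ are in hand. The difficulty is that the tangent direction of the continuation at $R=\infty$ is $(1,1,1)$ (Proposition~\ref{PropCont2}), so the \emph{first}-order information only says that the three arcs grow equally and is entirely silent about their ordering; the ordering is a strictly higher-order effect. One must therefore control the next nontrivial order, verify that the differences $\sigma_i-\sigma_j$ first appear at order $\rho^3$ with the stated coefficient, and confirm that the contributions from the $\sigma_k$-direction and from $1-\cos\rho$ balance as claimed, so that the sign of the leading coefficient is genuinely that of $m_i-m_j$ and is not reversed by a neglected term.
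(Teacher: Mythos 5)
Your proposal is correct and takes essentially the same route as the paper: the paper likewise fixes the ordering at the Euclidean limit via the expansion $\sigma_\ell = \epsilon + \bigl(m_\ell/M - 1/3\bigr)\epsilon^3/3! + O(\epsilon^5)$ (equivalent to your $\sigma_i-\sigma_j=\tfrac{m_i-m_j}{6M}\rho^3+O(\rho^4)$) and then propagates the ordering along the connected continuation using Proposition~\ref{equalSigmaNeedsEqualMass}, exactly as in your argument. Your preliminary gradient computation is just the linearization underlying that same expansion, so the two proofs coincide in substance.
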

\begin{proof}

Let be
$\sum_{\ell=1,2,3} \sigma_\ell
=\sum_{\ell=1,2,3} s_\ell/R
=3\epsilon \ll 1$.
Then the expansion of $\sigma_\ell$ to $O(\epsilon^3)$ is
\begin{equation}\label{almostEquilateral}
\sigma_\ell = \epsilon 
%	+\left(\frac{m_\ell}{m_1+m_2+m_3}-\frac{1}{18}\right)\epsilon^3
	+\left(\frac{m_\ell}{m_1+m_2+m_3}-\frac{1}{3}\right)\frac{\epsilon^3}{3!}
	+O(\epsilon^5).
\end{equation}

Therefore, $\sigma_i<\sigma_j<\sigma_k$ 
if $m_i<m_j<m_k$ for
sufficiently small $\sigma$.
By Proposition \ref{equalSigmaNeedsEqualMass},
the ordering of $\sigma_\ell$ cannot be changed
in the continuation of the solution.
Therefore, 
the ordering $\sigma_i<\sigma_j<\sigma_k$ is preserved 
in the continuation to finite size of $\sigma$.
\end{proof}

\begin{remark}\label{remark2}
For finite $R$, 
there are no equilateral solutions if the masses are not equal.
Instead, there are  almost equilateral solutions if $\sigma_\ell \ll 1$.
Similarly, there are three continuations of $ERE$
which are almost similar to the three Euler solutions
on the Euclidean plane $\mathbb{R}^2$ \cite{Fujiwara}. We call such continuation of the solutions as ``\,\olre\ continuation'' and ``\,\oere\ continuations'' respectively, because these continuations start from
the origin. 
\end{remark}

\subsection{Continuation of Euler $RE$ on the equator
to Lagrange $RE$}
If the masses satisfy the condition (\ref{ConditionEREonEquator}),
then $ERE$ on the equator exists.
\begin{proposition}
The continuation of $LRE$ from 
an $ERE$ on the equator exists.
\end{proposition}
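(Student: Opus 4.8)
The plan is to run the implicit function theorem exactly as in Propositions~\ref{PropCont2} and~\ref{bifurcationLREandEREforPartialEqualMass}, taking $f=\lambda_{12}$ and $g=\lambda_{23}$ with base point the ERE-on-the-equator shape $p_0=(\sigma_1,\sigma_2,\sigma_3)$ prescribed by \eqref{SigmaForEREonEquator}. The first step, which is purely algebraic, is to verify that $p_0$ is already a solution of the $LRE$ conditions $\lambda_{12}=\lambda_{23}=0$, so that it is a genuine intersection point of the two surfaces $\lambda_{12}=0$ and $\lambda_{23}=0$. To do this I would read \eqref{SigmaForEREonEquator} as the law of cosines $\cos\sigma_k=(\mu_k^2-\mu_i^2-\mu_j^2)/(2\mu_i\mu_j)$ for the auxiliary planar triangle with side lengths $\mu_1,\mu_2,\mu_3$ (which exists precisely because of the strict triangle inequalities \eqref{ConditionEREonEquator}), whose interior angles are $\pi-\sigma_k$. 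The law of sines for that triangle then gives $\sin\sigma_k=C\,\mu_k=C\sqrt{m_im_j}$ with a common constant $C$, and substituting $\sin^3\sigma_k=C^3(m_im_j)^{3/2}$ together with the explicit $\cos\sigma_k$ into each line of \eqref{eqForLagrangeI} collapses every numerator, so that $\lambda_1=\lambda_2=\lambda_3=M$, the total mass. Hence $\lambda_{12}=\lambda_{23}=0$ at $p_0$; this is also consistent with \eqref{sigmaToTheta}, which returns $\cos\theta_k=0$, i.e. all three masses on the equator.

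The second, and genuinely harder, step is the transversality condition $\nabla\lambda_{12}\times\nabla\lambda_{23}\neq 0$ at $p_0$, where $\nabla=(\partial_{\sigma_1},\partial_{\sigma_2},\partial_{\sigma_3})$. I would compute the two gradients from \eqref{eqForLagrangeI} and then simplify them at $p_0$ using the same relations $\sin\sigma_k=C\sqrt{m_im_j}$ and the explicit cosines, which should reduce the three components of the cross product to rational expressions (indeed polynomial, after clearing the common powers of $\sin$) in $m_1,m_2,m_3$. The main obstacle is precisely to prove that these do not vanish simultaneously for every mass triple allowed by \eqref{ConditionEREonEquator}; I expect this to come down to exhibiting one component as a nonzero polynomial that is sign-definite on the open region cut out by the strict inequalities $\mu_k<\mu_i+\mu_j$, possibly after symmetrizing over the index permutations.

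Once the cross product is shown to be nonzero, the implicit function theorem in the form recalled in Section~\ref{shapes} yields a one-dimensional continuation of the solution set $\lambda_{12}=\lambda_{23}=0$ through $p_0$, whose tangent is $\nabla\lambda_{12}\times\nabla\lambda_{23}$. To confirm that this continuation is a genuine $LRE$ continuation and not an artifact trapped on the boundary, I would observe that the ERE on the equator is the unique shape with $\sigma_1+\sigma_2+\sigma_3=2\pi$ for the given masses, so the continuation curve cannot remain in the plane $\sum_k\sigma_k=2\pi$; its tangent is therefore transverse to that plane and the curve enters the interior of $U_\textrm{phys}$, where \eqref{sigmaToTheta} gives $\cos\theta_k\neq 0$ and the configuration is a non-degenerate $LRE$. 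By Definition~\ref{defEulerJunction} the point $p_0$ is then the Euler coupling at which the northern- and southern-hemisphere $LRE$ continuations (the two signs $s=\pm1$ in \eqref{sigmaToTheta}) join, which is exactly the statement to be proved.
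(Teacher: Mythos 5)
Your overall strategy coincides with the paper's: check that the shape \eqref{SigmaForEREonEquator} satisfies $\lambda_{12}=\lambda_{23}=0$, then apply the implicit function theorem after showing $\nabla\lambda_{12}\times\nabla\lambda_{23}\neq 0$ there. Your first step is complete and in fact more illuminating than the paper's ``direct calculation'': interpreting \eqref{SigmaForEREonEquator} via the auxiliary planar triangle with sides $\mu_k$, using the law of sines to get $\sin\sigma_k=C\mu_k$, and concluding $\lambda_1=\lambda_2=\lambda_3=M$ (consistent with $\cos\theta_k=0$ in \eqref{sigmaToTheta}) is a correct and clean verification.

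The genuine gap is the second step: you never establish the transversality condition, you only describe how you would attempt it and state an expectation that some component of the cross product will turn out to be sign-definite on the region cut out by \eqref{ConditionEREonEquator}. Since this nonvanishing is the entire analytic content of the proposition, the proposal as written is incomplete. The paper closes exactly this hole with a trick you did not find: instead of examining a single component, it contracts the cross product with the vector $(1,1,1)$ (the normal to the equator plane $\sigma_1+\sigma_2+\sigma_3=2\pi$) and obtains the factorization
\begin{equation}
m_1^2m_2^2m_3^2\,(1,1,1)\cdot(\nabla\lambda_{12}\times\nabla\lambda_{23})
=-\Bigl(\sum_k m_k^2\Bigr)\Bigl(\sum_\ell \mu_\ell\Bigr)
(\mu_1+\mu_2-\mu_3)(\mu_2+\mu_3-\mu_1)(\mu_3+\mu_1-\mu_2),
\end{equation}
which is nonzero precisely by the strict inequalities \eqref{ConditionEREonEquator}. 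This confirms your expectation, but it is the symmetric contraction that makes the expression factor so neatly; an individual Cartesian component need not be sign-definite. Note also that this choice of contraction renders your final paragraph unnecessary: a nonzero $(1,1,1)$-component of the tangent vector already shows that the continuation curve leaves the plane $\sum_k\sigma_k=2\pi$ transversally, so you do not need the claim (which you assert without proof) that the ERE on the equator is the unique solution of the $LRE$ conditions in that plane.
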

\begin{proof}
Direct calculation shows that $\lambda_{12}=\lambda_{23}=0$
are satisfied by
$\sigma_k$ in
(\ref{SigmaForEREonEquator}).
Besides that, we get
\begin{equation}
\begin{split}
&m_1^2m_2^2m_3^2\,\,(1,1,1)\cdot(\nabla\lambda_{12}\times\nabla\lambda_{23})\\
&=-\left(\sum_k m_k^2\right)
	\left(\sum_\ell \mu_\ell\right)
	(\mu_1+\mu_2-\mu_3)(\mu_2+\mu_3-\mu_1)
	(\mu_3+\mu_1-\mu_2)\\
&\ne 0
\mbox{ by the condition (\ref{ConditionEREonEquator})}.
\end{split}
\end{equation}
By the implicit function theorem, we get the result.
\end{proof}

\begin{proposition}\label{PropContOfEREOnEquator}
The continuation of $LRE$ from the $ERE$ on the equator
has $\sigma_i<\sigma_j<\sigma_k$
if $m_i<m_j<m_k$.
\end{proposition}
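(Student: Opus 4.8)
The plan is to mirror the two-step strategy used in the proof of Proposition \ref{PropCont3}: first establish the ordering at the starting configuration, and then argue that it cannot change along the continuation.

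First I would pin down the ordering at the $ERE$ on the equator itself. The key observation is that the defining formula (\ref{SigmaForEREonEquator}) can be read as a law of cosines: since
\begin{equation}
\cos\sigma_k=\frac{\mu_k^2-\mu_i^2-\mu_j^2}{2\mu_i\mu_j}
=-\frac{\mu_i^2+\mu_j^2-\mu_k^2}{2\mu_i\mu_j},
\end{equation}
each $\sigma_k$ equals $\pi-C_k$, where $C_k$ is the interior angle opposite the side $\mu_k$ in the planar triangle with side lengths $\mu_1,\mu_2,\mu_3$. That this triangle is non-degenerate is exactly the hypothesis (\ref{ConditionEREonEquator}), and the supplementary relation is consistent with $\sum_k\sigma_k=3\pi-\sum_k C_k=2\pi$. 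Now, from $\mu_k=\sqrt{m_im_j}$ one reads off directly that $m_i<m_j<m_k$ forces $\mu_k<\mu_j<\mu_i$, since the product of the two smaller masses is smallest and that of the two larger masses is largest. In any planar triangle the larger side subtends the larger opposite angle, so $C_k<C_j<C_i$, and taking supplements reverses the inequalities to give $\sigma_i<\sigma_j<\sigma_k$. This settles the ordering at the base point.

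Second, I would invoke Proposition \ref{equalSigmaNeedsEqualMass} to propagate this ordering along the continuation. The continuation produced by the implicit function theorem is a continuous curve of $LRE$ in $(\sigma_1,\sigma_2,\sigma_3)$-space, along which the masses are fixed and unequal. If the strict ordering $\sigma_i<\sigma_j<\sigma_k$ were to fail at some parameter value, then by continuity some pair of $\sigma$'s would have to coincide at an intermediate point; but Proposition \ref{equalSigmaNeedsEqualMass} rules out $\sigma_i=\sigma_j$ for $LRE$ whenever $m_i\ne m_j$. Hence no pair can ever become equal, the ordering is locked in, and $\sigma_i<\sigma_j<\sigma_k$ persists throughout the continuation.

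I expect the only genuine content to lie in the first step: correctly identifying (\ref{SigmaForEREonEquator}) as the supplement of a planar-triangle angle, and then tracking the index bookkeeping that turns the mass ordering into the $\mu$-ordering and finally into the $\sigma$-ordering. The propagation step is essentially verbatim the argument of Proposition \ref{PropCont3} and presents no difficulty once Proposition \ref{equalSigmaNeedsEqualMass} is invoked.
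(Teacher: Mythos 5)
Your proof is correct, and the heart of it differs from the paper's in a pleasant way. The paper proves the ordering at the equatorial $ERE$ by brute algebra: it computes
\begin{equation}
\cos\sigma_i-\cos\sigma_j
=\frac{(\mu_i-\mu_j)\left((\mu_i+\mu_j)^2-\mu_k^2\right)}{2\mu_1\mu_2\mu_3},
\end{equation}
notes that the second factor is positive by the triangle-type condition (\ref{ConditionEREonEquator}), and concludes from the monotonicity of $\cos$ on $(0,\pi)$ that the $\sigma$-ordering is opposite to the $\mu$-ordering, hence equal to the mass ordering. You instead read (\ref{SigmaForEREonEquator}) through the law of cosines, identifying each $\sigma_k$ as the supplement $\pi-C_k$ of the angle opposite the side $\mu_k$ in the Euclidean triangle with sides $\mu_1,\mu_2,\mu_3$ (non-degenerate exactly by (\ref{ConditionEREonEquator})), and then invoke ``larger side, larger opposite angle''; this is more conceptual, and it explains for free why $\sum_k\sigma_k=2\pi$. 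Both routes rest on the same two ingredients: the reversal $m_i<m_j<m_k\Rightarrow\mu_k<\mu_j<\mu_i$ and the triangle inequality for the $\mu$'s. A further point in your favor: the paper's proof stops once the ordering is established at the equatorial point itself, leaving the persistence along the continuation implicit (it is the same argument as in Proposition \ref{PropCont3}), whereas you spell out that step, correctly noting that the conditions $\lambda_{12}=\lambda_{23}=0$ hold along the whole continuation, so any coincidence $\sigma_i=\sigma_j$ would force $m_i=m_j$ by Proposition \ref{equalSigmaNeedsEqualMass}, which is excluded. So your write-up is, if anything, slightly more complete than the paper's.
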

\begin{proof}
Without loss of generality, we can assume that $m_1<m_2<m_3$.
Since
$\mu_k=\sqrt{m_1m_2m_3}/\sqrt{m_k}$
for $k=1,2,3$,
$\mu_1>\mu_2>\mu_3$ is obvious.
Then 
$\cos\sigma_i-\cos\sigma_j
=(\mu_i-\mu_j)\Big((\mu_i+\mu_j)^2-\mu_k^2)\Big)/(2\mu_1\mu_2\mu_3)$
for $(i,j,k)=(1,2,3)$, $(2,3,1)$, and $(3,1,2)$
yields 
$\cos\sigma_1>\cos\sigma_2>\cos\sigma_3$,
because any $ERE$ on the equator satisfies $\mu_i+\mu_j>\mu_k$.
Since $\cos\sigma$ is a decreasing function in $0<\sigma<\pi$,
the proposition is proved.
\end{proof}

\subsection{Mass ratio for a given shape}\label{massRatioForGivenShape2}
To go further into the consideration of the bifurcation of
$LRE$ and $ERE$ for general masses,
we treat the conditions for $LRE$, $\lambda_{12}=\lambda_{23}=0$,
as the equations for the mass 
ratios 
$\nu_1=m_1/m_3$ and 
$\nu_2=m_2/m_3$.
This subsection is an extension of the section \ref{massRatioForGivenShape1}
for partial equal masses case
to the general masses case.

The conditions $\lambda_{12}=\lambda_{23}=0$ take the form
\begin{equation}
S\left(\begin{array}{c}\nu_1 \\\nu_2 \\1\end{array}\right)
=0,
\end{equation}
where $S$ is a two by three matrix
in function of $\sigma_k$.
The rank of $S$ is at most two.

If $\rank\, S=2$, $(\nu_1,\nu_2)$ is determined uniquely.
Let be 
\begin{equation}
\tilde{S}=\left(\begin{array}{cc}
S_{11} & S_{12} \\S_{21} & S_{22}
\end{array}\right),\quad
s=\left(\begin{array}{c}S_{13} \\S_{23}\end{array}\right)
\end{equation}
Then
\begin{equation}
{}^t(\nu_1,\nu_2)=-\tilde{S}^{-1}s.
\end{equation}
The following lemmas are obvious.
\begin{lemma}
For a given shape $\sigma_k$,
if $\rank\, S=2$ and if the equation ${}^t(\nu_1,\nu_2)=-\tilde{S}^{-1}s$ gives
$\nu_1>0$ and $\nu_2>0$, then
this shape form a $LRE$ with this mass ratio.
\end{lemma}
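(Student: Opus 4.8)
The plan is to exploit the fact, visible directly in the characterization (\ref{eqForLagrangeI}), that each of $\lambda_1,\lambda_2,\lambda_3$ is \emph{linear}, indeed homogeneous of degree one, in the masses $m_1,m_2,m_3$, with coefficients depending only on $\sigma_1,\sigma_2,\sigma_3$. Consequently the differences $\lambda_{12}=\lambda_1-\lambda_2$ and $\lambda_{23}=\lambda_2-\lambda_3$ are two homogeneous linear forms in $(m_1,m_2,m_3)$. First I would make this explicit by reading off from (\ref{eqForLagrangeI}) that $\lambda_{12}$ and $\lambda_{23}$ can each be written as $a\,m_1+b\,m_2+c\,m_3$ with $a,b,c$ functions of the shape alone; this is the structural content that powers the whole argument.

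Next I would pass to mass ratios. Because both forms are homogeneous of degree one and $m_3>0$, dividing each by $m_3$ yields two equations that are affine in $\nu_1=m_1/m_3$ and $\nu_2=m_2/m_3$, namely $S_{k1}\nu_1+S_{k2}\nu_2+S_{k3}=0$ for $k=1,2$, whose coefficients depend only on $\sigma_k$. Collecting them gives precisely the displayed system $S\,{}^t(\nu_1,\nu_2,1)=0$. Under the hypotheses of the lemma—$\rank S=2$ together with invertibility of the leading block $\tilde S$, which is implicitly assumed the moment one writes $-\tilde S^{-1}s$—the pair $(\nu_1,\nu_2)$ is pinned down uniquely as $-\tilde S^{-1}s$.

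Finally, assuming this unique solution satisfies $\nu_1>0$ and $\nu_2>0$, I would lift the ratios back to an honest mass configuration: choose any $m_3>0$ and set $m_1=\nu_1 m_3$ and $m_2=\nu_2 m_3$, which are three strictly positive masses realizing the prescribed ratios. By construction these satisfy $\lambda_{12}=\lambda_{23}=0$, and by the necessary-and-sufficient characterization (\ref{eqForLagrangeI}) this is exactly the condition for the shape $\sigma_k$ to form an $LRE$. The freedom in $m_3$ merely rescales the overall size and, through the angular velocity $\omega$, the period, so the entire ray of masses with these ratios yields $LRE$ of the given shape.

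There is no genuine obstacle here—this is why the statement is listed as obvious—and the only point demanding care is the homogeneity of the $\lambda_k$ in the masses. It is this homogeneity that guarantees the $LRE$ condition depends on the mass ratios alone, and that a positive solution $(\nu_1,\nu_2)$ lifts to strictly positive masses; once it is observed, the remainder is elementary linear algebra combined with the characterization already proved.
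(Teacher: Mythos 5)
Your proof is correct and follows the same (implicit) reasoning as the paper, which simply declares this lemma obvious after setting up the system $S\,{}^t(\nu_1,\nu_2,1)=0$: since each $\lambda_k$ in (\ref{eqForLagrangeI}) is linear and homogeneous in the masses, the conditions $\lambda_{12}=\lambda_{23}=0$ depend only on the ratios $(\nu_1,\nu_2)$, and a positive solution lifts to positive masses satisfying the paper's necessary-and-sufficient $LRE$ characterization. Your writeup just makes explicit the homogeneity argument the paper leaves unstated.
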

\begin{remark}
We have checked the shapes in $U_\textrm{phys}$
with $\sigma_k\in \{\ell\pi/12,1\le\ell<12\}$
and $\sigma_1\le\sigma_2\le\sigma_3$.
There are 73 $LRE$
among the total of $133$ such grid points.
All of them have $\det \tilde S>0$.
There are 25 scalene $LRE$.
For example 
$(\sigma_1,\sigma_2,\sigma_3)=(\pi/4,3\pi/4,5\pi/6)$
has $\det \tilde S=1/4$,
$(\nu_1,\nu_2)=((2+3\sqrt{3})/2,(-2+5\sqrt{3})/2)$,
and $\lambda=2(2+\sqrt{3})$.
\end{remark}

\begin{lemma}
For a given shape $\sigma_k$,
if $\rank\, S=\rank\, \tilde S=1$ and
there are solutions $\nu_k>0$
that satisfy 
$S_{11}\nu_1+S_{12}\nu_2 + S_{13}=0$, then
this shape form 
a $RE$
with these mass ratios.
\end{lemma}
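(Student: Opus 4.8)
The plan is to treat the whole statement as pure linear algebra, once the affine-linear dependence of the $LRE$ conditions on the mass ratios has been made explicit. Observe first that each $\lambda_i$ in \eqref{eqForLagrangeI} is linear in the masses $m_1,m_2,m_3$, hence so are the differences $\lambda_{12}$ and $\lambda_{23}$; dividing through by $m_3$ turns them into affine functions of $\nu_1=m_1/m_3$ and $\nu_2=m_2/m_3$. This is precisely the content of writing the $LRE$ conditions as $S\,{}^t(\nu_1,\nu_2,1)=0$ with $S$ a $2\times3$ matrix depending only on the shape $\sigma_k$. Splitting off the last column, the system is equivalent to $\tilde S\,{}^t(\nu_1,\nu_2)=-s$, where $S=(\tilde S\mid s)$.

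Next I would invoke the standard consistency criterion (Rouch\'e--Capelli): the inhomogeneous system $\tilde S\,{}^t(\nu_1,\nu_2)=-s$ has a solution if and only if $\rank\tilde S=\rank(\tilde S\mid s)=\rank S$. The hypothesis $\rank S=\rank\tilde S=1$ is exactly this consistency condition together with the information that the rank is deficient. Since $\rank S=1$, the two rows of $S$ are proportional, so the equation coming from the second row, $S_{21}\nu_1+S_{22}\nu_2+S_{23}=0$, is a scalar multiple of the one coming from the first, $S_{11}\nu_1+S_{12}\nu_2+S_{13}=0$; consequently any $(\nu_1,\nu_2)$ satisfying the latter automatically satisfies the former. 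Because $\rank\tilde S=1$ rather than $0$, this single equation is nontrivial in $(\nu_1,\nu_2)$, so its solution set is a genuine affine line, i.e. a one-parameter family of admissible mass ratios.

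Finally I would conclude by unwinding the definitions: any $(\nu_1,\nu_2)$ on this line satisfies both $\lambda_{12}=0$ and $\lambda_{23}=0$, which by \eqref{eqForLagrangeI} is the defining condition for a relative equilibrium of this shape. If in addition $\nu_1>0$ and $\nu_2>0$, then fixing any $m_3>0$ and setting $m_1=\nu_1 m_3$, $m_2=\nu_2 m_3$ produces genuinely positive masses, so the shape $\sigma_k$ is realized as an $RE$ with this mass ratio, as claimed.

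The only real obstacle is a piece of bookkeeping hidden in the phrasing: the displayed equation is written using the \emph{first} row of $S$, but it is logically possible for $\rank\tilde S=1$ to hold with $(S_{11},S_{12})=(0,0)$ and the nonzero row being the second one. In that degenerate sub-case the first-row equation is vacuous and one must instead cut out the solution line with the nonzero row. I would handle this by phrasing the argument in terms of ``the nonzero row of $S$'' rather than fixing row $1$; apart from this, no computation with the trigonometric entries of $S$ is needed, which is what makes the proof immediate.
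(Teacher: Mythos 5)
Your proof is correct and takes essentially the same route as the paper: in fact the paper gives no argument at all, introducing this statement with ``The following lemmas are obvious,'' and your Rouch\'e--Capelli / row-proportionality reasoning is exactly the linear algebra the authors are taking for granted when they write the $LRE$ conditions as $S\,{}^t(\nu_1,\nu_2,1)=0$. Your closing caveat --- that if $(S_{11},S_{12})=(0,0)$ the displayed first-row equation is vacuous and one must instead use the nonzero row of $S$ --- is a minor but genuine sharpening of the statement, not a deviation from its intended proof.
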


\subsection{Bifurcations between $ERE$ on a rotating meridian
and $LRE$}
If there is a bifurcation point between an $ERE$ on a rotating meridian and a $LRE$, it must be on the plane $\sigma_k=\sigma_i+\sigma_j$.
Without loss of generality, we can take the plane 
$\sigma_3=\sigma_1+\sigma_2$.

\subsubsection{Bifurcation points on the plane $\sigma_3=\sigma_1+\sigma_2$}

The aim of this subsection is to prove
the following result.
\begin{theorem}\label{BifurcationsEREandLRE}
Any point on the curve
\begin{equation}
h=\cos(3\sigma_3)-3\cos(\sigma_3)
	+2\cos(2\sigma_3)\cos(\sigma_1-\sigma_2)
	=0
\end{equation}
which belongs to the plane
$\sigma_3=\sigma_1+\sigma_2$
is a bifurcation point of $ERE$ on a rotating meridian
and a $LRE$ for continuously many $(\nu_1,\nu_2)$.
Inversely,
if a continuation of $LRE$
(for given $\nu_1,\nu_2>0$)
reaches the $\sigma_3=\sigma_1+\sigma_2$ plane,
the point is on the curve $h=0$.
Therefore, this is a bifurcation point
between $LRE$ and $ERE$.
\end{theorem}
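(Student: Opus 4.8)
The plan is to work entirely on the boundary plane $\Pi=\{\sigma_3=\sigma_1+\sigma_2\}$ and to exploit that \emph{both} the $LRE$ conditions $\lambda_{12}=0,\ \lambda_{23}=0$ and the $ERE$ condition $d=0$ of \eqref{eqForEREI} are linear in the masses, hence in $(\nu_1,\nu_2,1)$. Following Subsection \ref{massRatioForGivenShape2}, I would write the two $LRE$ conditions as $S\,{}^{t}(\nu_1,\nu_2,1)=0$ with $S=S(\sigma_1,\sigma_2,\sigma_3)$ a $2\times3$ matrix, and regard $d=0$ as a third linear form in $(\nu_1,\nu_2,1)$. The first point to establish is that on $\Pi$ a collinear limit of $LRE$ is forced to be an $ERE$: concretely, that the row defining $d$ lies in the span of the two rows of $S$ when $\sigma_3=\sigma_1+\sigma_2$. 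This is what makes the intersection a genuine bifurcation between the two families rather than an accidental meeting, and it is consistent with $m_3$ sitting between $m_1$ and $m_2$.

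Next I would compute the three $2\times2$ minors of $S$ restricted to $\Pi$ and show that they vanish simultaneously exactly on the curve $h=0$. Substituting $\sigma_3=\sigma_1+\sigma_2$ and reducing with sum-to-product and multiple-angle identities, the mass-independent factor $h=\cos(3\sigma_3)-3\cos(\sigma_3)+2\cos(2\sigma_3)\cos(\sigma_1-\sigma_2)$ should emerge, times strictly positive powers of $\sin\sigma_k$. Granting this, on $\Pi\cap\{h=0\}$ the matrix $S$ drops to rank one, so $S\,{}^{t}(\nu_1,\nu_2,1)=0$ has a one-parameter family of solutions $(\nu_1,\nu_2)$; together with the previous paragraph this family lies on the $ERE$ line $d=0$. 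Restricting to the sub-arc of the solution line with $\nu_1,\nu_2>0$ then yields \emph{continuously many} admissible mass ratios, which is the quantifier in the statement.

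For the direct direction it remains to produce, for each such mass ratio, an actual $LRE$ continuation emanating from the point. I would apply the implicit function theorem to $(\lambda_{12},\lambda_{23})$ at the point, checking $\nabla\lambda_{12}\times\nabla\lambda_{23}\neq0$ and that its direction points into the interior $\{\sigma_3<\sigma_1+\sigma_2\}$, so that by Definition \ref{defBifurcationPoint} the curve touches $\Pi$ and returns as the oppositely oriented $LRE$. For the converse I would argue that a continuation carried by fixed positive masses that reaches $\Pi$ cannot cross into the non-physical region $\sigma_3>\sigma_1+\sigma_2$; being the coupling of the two orientations, its shape curve must be tangent to $\Pi$ at the contact point. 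Tangency is the scalar condition $(\nabla\lambda_{12}\times\nabla\lambda_{23})\cdot(-1,-1,1)=0$, equivalently the vanishing of the Jacobian of $(\sigma_3-\sigma_1-\sigma_2,\lambda_{12},\lambda_{23})$; evaluated on the locus $\lambda_{12}=\lambda_{23}=0$ the masses can be eliminated and this Jacobian should again reduce to a positive multiple of $h$, forcing $h=0$ at the contact point.

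The main obstacle is the trigonometric bookkeeping common to both directions: showing that after imposing $\sigma_3=\sigma_1+\sigma_2$ and using the $LRE$ relations to remove the mass dependence, the minors of $S$ and the tangency Jacobian both collapse onto the single factor $h$. The delicate part is organizing the multiple-angle reductions so that the $\nu$-dependence cancels cleanly and the same $h$ governs the rank drop (for the ``continuously many'' direct statement) and the tangency (for the converse); a secondary care is verifying positivity of the masses on the relevant sub-arc and ruling out the degenerate case $\nabla\lambda_{12}\times\nabla\lambda_{23}=0$ where the implicit function theorem does not apply.
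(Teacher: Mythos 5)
Your converse is where the genuine gap lies. It rests on the premise that an $LRE$ continuation reaching the plane $\Pi=\{\sigma_3=\sigma_1+\sigma_2\}$ must be \emph{tangent} to it, ``because it cannot cross into the non-physical region.'' That premise is unjustified and in general false: $\lambda_{12}$ and $\lambda_{23}$ are analytic in $(\sigma_1,\sigma_2,\sigma_3)$ on all of $U$, so for fixed positive masses the solution curve of $\lambda_{12}=\lambda_{23}=0$ is perfectly well defined on both sides of $\Pi$ and may cross it transversally; the portion beyond $\Pi$ merely fails to represent triangles, which produces no contradiction. Hence the scalar $(\nabla\lambda_{12}\times\nabla\lambda_{23})\cdot(-1,-1,1)$ need not vanish at a contact point, and your further claim that, after eliminating the masses, this quantity reduces to a positive multiple of $h$ is unsupported. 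The paper's converse is far simpler and uses exactly the linear algebra you already set up for the direct half: at a contact point the system $S\,{}^t(\nu_1,\nu_2,1)=0$ holds with the given positive masses; off the curve $h=0$ one has $\rank\,S=2$ (Lemma \ref{theMeaningOfH}), so the mass ratios are uniquely determined, and the explicit solution (\ref{massRatioOnHneZero}) is $\nu_1=-\sin^2(\sigma_3)/\sin^2(\sigma_1)<0$, $\nu_2=-\sin^2(\sigma_3)/\sin^2(\sigma_2)<0$, contradicting positivity. Therefore the contact point must lie on $h=0$, with no tangency argument at all.

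For the direct half your plan does follow the paper's route (rank drop of $S$ exactly on $h=0$; the one-parameter kernel gives the masses; $d$ vanishes on that family; implicit function theorem for the $LRE$ continuation), but the three steps you defer are precisely where the work is. First, nonemptiness of the positive sub-arc is not automatic: the paper first shows that $\pi/2<\sigma_3\le 2\sigma_E$ on $h=0$, hence $\cos\sigma_3<0$, which makes the coefficient of $\nu_1$ in (\ref{massRatioOnHeqZero}) positive, so large $\nu_1$ indeed yields $\nu_2>0$. Second, a bifurcation point in the sense of Definition \ref{defBifurcationPoint} requires the $ERE$ continuation to exist as well; the paper proves $\nabla d\ne 0$ at these points by a determinant computation, a step your proposal never addresses (you only show the contact shape satisfies $d=0$; also note the paper establishes the relation $d=\sin(\sigma_1-\sigma_2)\,h\,r$ only on $h=0$, not the stronger row-span containment on all of $\Pi$ that you posit). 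Third, the nondegeneracy $c=\nabla\lambda_{12}\times\nabla\lambda_{23}\ne 0$ must hold for \emph{every} mass ratio in the one-parameter family; the paper handles this uniformly by writing $c=A\,{}^t(\nu_1^2,\nu_1,1)$ and showing $\det A\ne 0$ on $h=0$ when $\sigma_1\ne\sigma_2$, with the symmetric point $\sigma_1=\sigma_2=\sigma_E$ treated separately via Proposition \ref{bifurcationLREandEREforPartialEqualMass} (see Lemma \ref{continuationLREexists}). As written, your proposal gives only the skeleton of the direct half and an approach to the converse that would fail.
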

The proof of this Theorem is given in a sequence of lemmas.

\begin{lemma}\label{theMeaningOfH}
On the plane $\sigma_3=\sigma_1+\sigma_2$,
$\rank\,  S=\rank\, \tilde S=1$ on the curve
$h=0$,
otherwise
$\rank\,  S=\rank\, \tilde S=2$.
\end{lemma}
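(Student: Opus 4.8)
The plan is to make the linear-algebra content of the lemma fully explicit and then reduce everything to a single trigonometric factorization. Since $\lambda_{12}$ and $\lambda_{23}$ are linear in the masses, I would first read off from (\ref{eqForLagrangeI}) the two coefficient row vectors
\begin{equation}
v_{12}=\big([\lambda_{12}]_1,[\lambda_{12}]_2,[\lambda_{12}]_3\big),\qquad
v_{23}=\big([\lambda_{23}]_1,[\lambda_{23}]_2,[\lambda_{23}]_3\big),
\end{equation}
where $[\,\cdot\,]_k$ denotes the coefficient of $m_k$; these are exactly the rows of $S$, the columns being indexed by $(\nu_1,\nu_2,1)$. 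To remove denominators I would scale the first row by $\sin^3(\sigma_1)\sin^3(\sigma_2)$ and the second by $\sin^3(\sigma_2)\sin^3(\sigma_3)$. Since $0<\sigma_k<\pi$ forces every $\sin\sigma_k>0$, this is a scaling of each row by a nonzero factor, which preserves both $\rank S$ and $\rank\tilde S$ and turns the entries into explicit trigonometric polynomials in $\sigma_1,\sigma_2,\sigma_3$.

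The key reduction is the identity $\lambda_{12}+\lambda_{23}+\lambda_{31}\equiv0$ in the masses, so the third coefficient vector is $v_{31}=-(v_{12}+v_{23})$ and hence $\rank S=\rank\{v_{12},v_{23}\}$. Consequently $\rank S<2$ holds exactly when $v_{12}$ and $v_{23}$ are parallel, i.e. when the cross product $v_{12}\times v_{23}$ vanishes in $\mathbb{R}^3$. Its three components are precisely the three $2\times2$ minors of $S$, and its third component equals $\det\tilde S$. The entire statement therefore follows once I show that, on the plane $\sigma_3=\sigma_1+\sigma_2$, each component factors as a nowhere-vanishing trigonometric factor times $h$:
\begin{equation}
(v_{12}\times v_{23})_r=C_r(\sigma_1,\sigma_2)\,h,\qquad r=1,2,3,
\end{equation}
with each $C_r$ a product of $\sin$ and $\cos$ factors nonzero in $U_\textrm{phys}$.

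To obtain these factorizations I would substitute $\sigma_3=\sigma_1+\sigma_2$, then collapse each minor using $\sin\sigma_3=\sin(\sigma_1+\sigma_2)$, the product-to-sum identities, and $\cos(3\sigma_3)=4\cos^3\sigma_3-3\cos\sigma_3$; the symmetries $v_{12}\times v_{23}$ under exchanging the index roles and $h(\sigma_1,\sigma_2,\sigma_3)=h(\sigma_2,\sigma_1,\sigma_3)$ cut down the independent computations. Granting the factorizations, the conclusion is immediate: if $h\ne0$ then $\det\tilde S=C_3 h\ne0$, so $\rank\tilde S=\rank S=2$; if $h=0$ then all three minors vanish, giving $\rank S\le1$, and since $S\ne0$ on the plane (one checks directly that the six entries do not vanish simultaneously in the physical region) we get $\rank S=\rank\tilde S=1$. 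I expect the main obstacle to be this last step, namely verifying the three factorizations and, most delicately, confirming that the cofactors $C_r$ never vanish on $\{\sigma_3=\sigma_1+\sigma_2\}\cap U_\textrm{phys}$, so that the rank drops \emph{exactly} on $h=0$ and not on some larger locus; the trigonometric bookkeeping, rather than any conceptual gap, is the genuine work.
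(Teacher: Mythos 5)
Your proposal follows essentially the same route as the paper: the paper's proof is precisely a ``direct calculation'' showing that the cross product of the two rows of $S$ on the plane $\sigma_3=\sigma_1+\sigma_2$ equals $\left(\frac{\sin\sigma_2}{\sin\sigma_1},\frac{\sin\sigma_1}{\sin\sigma_2},-\frac{\sin\sigma_1\sin\sigma_2}{\sin\sigma_3}\right)\frac{h}{2}$, i.e.\ exactly the factorization into a nowhere-vanishing vector times $h$ that you anticipate, from which the rank dichotomy follows just as you argue. The only difference is cosmetic (your preliminary row rescaling and the unused identity $\lambda_{12}+\lambda_{23}+\lambda_{31}=0$), so the computation you defer is the one the paper carries out, and it confirms your expected structure.
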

\begin{proof}
A direct calculation shows that
\begin{equation}
(S_{11},S_{12},S_{13})\times 
(S_{21},S_{22},S_{23})
=	\left(
		\frac{\sin\sigma_2}{\sin\sigma_1},
		\frac{\sin\sigma_1}{\sin\sigma_2},
		-\frac{\sin\sigma_1\sin\sigma_2}
		{%\sin(\sigma_1+\sigma_2)
		\sin\sigma_3
		}
	\right)
	\frac{h}{2}.
\end{equation}
Since 
$0<\sigma_1,\sigma_2,\sigma_3<\pi$,
this vector is null
if and only if $h=0$.
\end{proof}

\begin{lemma}
The curve $h=0$, for $0<\sigma_1+\sigma_2=\sigma_3<\pi$,
is a continuous curve that connects
$(\sigma_1,\sigma_2)=(0,\pi/2)$ and $(\pi/2,0)$.
The range of $\sigma_3$ is
$\pi/2<\sigma_3\le 2\sigma_E=\arccos(-1+1/\sqrt{2})<3\pi/4$.
\end{lemma}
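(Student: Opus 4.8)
The plan is to pass to the symmetric coordinates $\sigma_3=\sigma_1+\sigma_2$ and $\delta=\sigma_1-\sigma_2$, in which the admissible triangle $\{\sigma_1>0,\ \sigma_2>0,\ \sigma_1+\sigma_2<\pi\}$ becomes $\{0<\sigma_3<\pi,\ |\delta|<\sigma_3\}$. Since $h$ depends on $\sigma_1,\sigma_2$ only through $\sigma_3$ and through $\cos(\sigma_1-\sigma_2)=\cos\delta$, on the plane $\sigma_3=\sigma_1+\sigma_2$ the equation $h=0$ is linear in $\cos\delta$. Away from the two values $\sigma_3=\pi/4,3\pi/4$ where $\cos(2\sigma_3)=0$ (at which $h\neq 0$, so no solution is lost), I would solve for $\cos\delta$, using $\cos(3\sigma_3)=4c^3-3c$ with $c=\cos\sigma_3$, obtaining $\cos\delta=f(c)$ where $f(c)=\tfrac{c(3-2c^2)}{2c^2-1}$. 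A zero of $h$ then lies in the closure of the region precisely when $c\le f(c)\le 1$: the upper bound is $\cos\delta\le 1$, and the lower bound $\cos\delta\ge\cos\sigma_3$ is exactly $|\delta|\le\sigma_3$, because $\cos$ is decreasing on $[0,\pi]$ and $|\delta|,\sigma_3\in[0,\pi)$.

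Next I would reduce these two inequalities to sign conditions through the factorizations
\[
f(c)-1=\frac{(2c^2+4c+1)(1-c)}{2c^2-1},\qquad
f(c)-c=\frac{4c(1-c)(1+c)}{2c^2-1}.
\]
The decisive observation is that $2c^2+4c+1$ vanishes at $c=-1+\tfrac{1}{\sqrt2}$, which equals $\cos(2\sigma_E)$ for the $\sigma_E$ of \eqref{defSigmaE}; this is what will pin the apex of the curve to $\sigma_3=2\sigma_E$. A sign analysis on the intervals cut out by the denominator's zeros $c=\pm\tfrac{1}{\sqrt2}$ together with $c=-1+\tfrac{1}{\sqrt2}$ and $c=0$ shows that $c\le f(c)\le 1$ holds exactly for $c\in[-1+\tfrac1{\sqrt2},0)$, with $f(c)=1$ (hence $\delta=0$, the isosceles apex) at the left end and $f(c)=c$ (hence $|\delta|=\sigma_3$, a boundary point) at the right end. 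Since $c=\cos\sigma_3$ is decreasing, this translates to the stated range $\pi/2<\sigma_3\le 2\sigma_E$; note that $2c^2-1<0$ throughout this interval, so the denominator never vanishes there.

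For connectedness and the identification of the endpoints I would compute $f'(c)=-\tfrac{4c^4+3}{(2c^2-1)^2}<0$, so $f$ restricts to a continuous, strictly decreasing bijection from $[-1+\tfrac1{\sqrt2},0]$ onto $[0,1]$. Inverting it expresses $c$, and hence $\sigma_3=\arccos c$ and the pair $(\sigma_1,\sigma_2)=\big(\tfrac{\sigma_3+\delta}{2},\tfrac{\sigma_3-\delta}{2}\big)$, as continuous functions of $\delta\in[-\pi/2,\pi/2]$. This exhibits $h=0$ as a single continuous arch: at $\delta=\pm\pi/2$ one gets $\cos\delta=0$, so $c=0$, $\sigma_3=\pi/2$, and the endpoints $(\sigma_1,\sigma_2)=(\pi/2,0)$ and $(0,\pi/2)$; at $\delta=0$ one gets $\cos\delta=1$, so $c=-1+\tfrac1{\sqrt2}$, giving the apex $(\sigma_E,\sigma_E)$ at $\sigma_3=2\sigma_E$. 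The remaining numerical bound $2\sigma_E<3\pi/4$ follows at once from $-1+\tfrac1{\sqrt2}>-\tfrac1{\sqrt2}=\cos(3\pi/4)$.

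I expect the only genuine obstacle to be the bookkeeping in the sign analysis of the second step, in particular recognizing the factor $2c^2+4c+1$ and that its admissible root is exactly $\cos(2\sigma_E)$. Once the two factorizations of $f(c)-1$ and $f(c)-c$ are in hand, the strict monotonicity $f'<0$ makes the continuity of the curve and the identification of its endpoints essentially immediate.
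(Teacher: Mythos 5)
Your proposal is correct and follows essentially the same route as the paper: both first verify $\cos(2\sigma_3)\neq 0$ on $h=0$, then solve the equation (linear in $\cos(\sigma_1-\sigma_2)$) for $\cos(\sigma_1-\sigma_2)$ as a function of $\sigma_3$, and use strict monotonicity of that function to locate the apex at $\sigma_3=2\sigma_E$ (where $\cos(\sigma_1-\sigma_2)=1$) and the endpoints at $\sigma_3=\pi/2$ (where $\cos(\sigma_1-\sigma_2)=0$). Your substitution $c=\cos\sigma_3$ and the factorizations of $f(c)-1$ and $f(c)-c$ simply make explicit, in polynomial form, the sign analysis and the solution of the apex equation that the paper carries out (or asserts) trigonometrically.
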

\begin{proof}
Since $0<2\sigma_3<2\pi$,
the solutions of $\cos(2\sigma_3)=0$ are
$2\sigma_3=\pi/2,3\pi/2$.
But $3\cos(\sigma_3)-\cos(3\sigma_3)=\pm 2\sqrt{2}\ne 0$
on these points.
Therefore $\cos(2\sigma_3)\ne 0$ on $h=0$.
Then
\begin{equation}\label{cosDelta}
\cos(\sigma_1-\sigma_2)
	=\frac{3\cos(\sigma_3)-\cos(3\sigma_3)}{2\cos(2\sigma_3)}.
\end{equation}
Since
\begin{equation}
\frac{d}{d\sigma_3}\left(
	\frac{3\cos(\sigma_3)-\cos(3\sigma_3)}{2\cos(2\sigma_3)}
	\right)
=\frac{\big(9+4\cos(2\sigma_3)+\cos(4\sigma_3)\big)\sin(\sigma_3)}
	{2\cos^2(2\sigma_3)}
>0,
\end{equation}
$\cos(\sigma_1-\sigma_2)$ is a strictly increasing function of $\sigma_3$.
Therefore, the upper limit of $\sigma_3$ is given by
$3\cos(\sigma_3)-\cos(3\sigma_3)=2\cos(2\sigma_3)$.
The solution is $\sigma_3=2\sigma_E$ and $\sigma_1=\sigma_2=\sigma_E$.
Decreasing $\sigma_3$, there are two solutions of $(\sigma_1,\sigma_2)$.
At $\sigma_3=\pi/2$, $\cos(\sigma_1-\sigma_2)=0$,
and $(\sigma_1,\sigma_2)=(0,\pi/2)$ and $(\pi/2,0)$.
For $\sigma_3<\pi/2$ the solution $(\sigma_1,\sigma_2)$
are out of 
$0<\sigma_1,\sigma_2$.
Therefore, the range of $\sigma_3$ is $\pi/2<\sigma_3\le 2\sigma_E$.
\end{proof}

\begin{lemma}
On the plane $\sigma_3=\sigma_1+\sigma_2$,
we can find  positive mass ratios that satisfy the conditions 
$\lambda_{12}=\lambda_{23}=0$
only on the curve $h=0$. Outside of this curve,
the conditions demand
$\nu_1,\nu_2<0$.
\end{lemma}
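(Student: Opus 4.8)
The plan is to prove the lemma via the case split already supplied by Lemma \ref{theMeaningOfH}: on the plane $\sigma_3=\sigma_1+\sigma_2$ one has $\rank S=\rank\tilde S=1$ exactly on the curve $h=0$, and $\rank S=\rank\tilde S=2$ everywhere else. The whole content of the statement is the second sentence, so the heart of the argument is the rank-two regime, where the solution of $\lambda_{12}=\lambda_{23}=0$ in $(\nu_1,\nu_2)$ is unique; I would compute it explicitly and read off its sign.

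When $h\neq 0$ the matrix $\tilde S$ is invertible and the mass ratios are forced to be ${}^t(\nu_1,\nu_2)=-\tilde S^{-1}s$. Writing $r_1=(S_{11},S_{12},S_{13})$ and $r_2=(S_{21},S_{22},S_{23})$ for the two rows of $S$ and applying Cramer's rule to $\tilde S\,{}^t(\nu_1,\nu_2)=-s$, I would match the signed $2\times2$ minors in the Cramer numerators, and the determinant $\det\tilde S$ in the denominator, to the components of the cross product $r_1\times r_2$, obtaining
\begin{equation}
\nu_1=\frac{(r_1\times r_2)_1}{(r_1\times r_2)_3},\qquad
\nu_2=\frac{(r_1\times r_2)_2}{(r_1\times r_2)_3},
\end{equation}
since $(r_1\times r_2)_3=\det\tilde S$ while the first two entries are exactly the minors that appear in the numerators. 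Substituting the explicit cross product from Lemma \ref{theMeaningOfH}, the common scalar factor $h/2$ cancels between numerator and denominator and leaves
\begin{equation}
\nu_1=-\frac{\sin\sigma_3}{\sin^2\sigma_1},\qquad
\nu_2=-\frac{\sin\sigma_3}{\sin^2\sigma_2}.
\end{equation}
Because $0<\sigma_1,\sigma_2,\sigma_3<\pi$, all three sines are strictly positive, so both ratios are strictly negative, which is precisely the assertion that off the curve the conditions demand $\nu_1,\nu_2<0$.

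This immediately yields the ``only on $h=0$'' half of the lemma: outside the curve the unique solution has negative ratios, so positive masses can satisfy $\lambda_{12}=\lambda_{23}=0$ on the plane only where $h=0$; equivalently, any \olre\ continuation with $\nu_1,\nu_2>0$ that reaches the plane $\sigma_3=\sigma_1+\sigma_2$ must meet it on $h=0$, which is the inverse direction of Theorem \ref{BifurcationsEREandLRE}. On the curve itself $\rank S=1$, so the two conditions collapse to the single linear relation $S_{11}\nu_1+S_{12}\nu_2+S_{13}=0$, whose solution set is a line in the $(\nu_1,\nu_2)$ plane, and a segment of this line lies in the positive quadrant, giving the ``continuously many'' positive ratios of the forward direction. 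I expect the main obstacle to be bookkeeping rather than conceptual: correctly pairing the signed $2\times2$ minors with the Cramer numerators and confirming the cancellation of the $h/2$ factor, together with verifying that the rank-one line genuinely intersects $\{\nu_1,\nu_2>0\}$, which should follow from the explicit entries of $S$ restricted to the plane.
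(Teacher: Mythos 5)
Your overall strategy is the same as the paper's: split on the rank dichotomy of Lemma \ref{theMeaningOfH}, compute the unique $(\nu_1,\nu_2)$ where $\rank\, S=2$ and read off its sign, and treat the rank-one locus $h=0$ as a line of solutions. Your kernel identification is also the right mechanism: since $(\nu_1,\nu_2,1)$ annihilates both rows of $S$, it is parallel to $r_1\times r_2$, so $\nu_1=(r_1\times r_2)_1/(r_1\times r_2)_3$ and $\nu_2=(r_1\times r_2)_2/(r_1\times r_2)_3$, independently of any row normalization of $S$. However, your explicit values $\nu_i=-\sin\sigma_3/\sin^2\sigma_i$ disagree with the paper's \eqref{massRatioOnHneZero}, which reads $\nu_i=-\sin^2\sigma_3/\sin^2\sigma_i$. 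Building $S$ directly from \eqref{eqForLagrangeI} and testing a point of the plane off the curve (e.g.\ $\sigma_1=0.5$, $\sigma_2=0.7$, $\sigma_3=1.2$) confirms the paper's values: the mass vector $\bigl(-\sin^2\sigma_3/\sin^2\sigma_1,\,-\sin^2\sigma_3/\sin^2\sigma_2,\,1\bigr)$ makes $\lambda_1=\lambda_2=\lambda_3$, while yours does not. The mismatch traces to the third component of the displayed cross product in Lemma \ref{theMeaningOfH}, which appears to carry a typo ($\sin\sigma_3$ in the denominator should be $\sin^2\sigma_3$); your algebra faithfully propagates it. Since both candidate formulas are negative for $0<\sigma_i<\pi$, the sign conclusion --- the actual content of the second sentence of the lemma --- survives, so this is a blemish rather than a logical failure on your side.

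The genuine gap is in the forward half: existence of positive ratios on $h=0$. You assert that a segment of the rank-one line $S_{11}\nu_1+S_{12}\nu_2+S_{13}=0$ lies in the positive quadrant and that this ``should follow from the explicit entries of $S$,'' but that is precisely the step requiring an argument: a line with negative slope and negative intercept misses the open positive quadrant entirely, so nothing about rank one alone gives the conclusion. The paper closes this by writing the line explicitly as \eqref{massRatioOnHeqZero} and invoking the preceding lemma's bound $\pi/2<\sigma_3\le 2\sigma_E$, hence $\cos\sigma_3<0$ everywhere on $h=0$: this makes both $1-\cos(\sigma_3)\sin^3(\sigma_1)/\sin^3(\sigma_2)$ and $1-\cos(\sigma_3)\sin^3(\sigma_2)/\sin^3(\sigma_1)$ strictly greater than $1$, so the coefficient of $\nu_1$ is positive, and taking $\nu_1$ large enough forces $\nu_2>0$ whatever the sign of the constant term. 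Your proof needs this (or an equivalent) sign argument, and in particular needs to import the range of $\sigma_3$ on $h=0$ from the previous lemma, to be complete.
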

\begin{proof}
On the curve $h=0$, $\rank\,S=\rank\, \tilde  S =1$,
the solutions are given by,
\begin{equation}\label{massRatioOnHeqZero}
\begin{split}
\nu_2
=\nu_1\,
&\frac{1-\cos(\sigma_3)\sin^3(\sigma_1)/\sin^3(\sigma_2)}
	{1-\cos(\sigma_3)\sin^3(\sigma_2)/\sin^3(\sigma_1)}\\
&+\frac{\Big(\cos(\sigma_2)/\sin^3(\sigma_1)-\cos(\sigma_1)/\sin^3(\sigma_2)\Big)
	\sin^3(\sigma_3)}
	{1-\cos(\sigma_3)\sin^3(\sigma_2)/\sin^3(\sigma_1)}.
\end{split}
\end{equation}
As shown above $\cos(\sigma_3)<0$ on $h=0$, therefore
the coefficient of $\nu_1$ in equation \eqref{massRatioOnHeqZero} is always positive; whereas 
the constant term could be positive, zero or negative.
Therefore, for any point on $h=0$,
we can always take positive $\nu_1$ and $\nu_2$.

On the other hand,
at $(\sigma_1,\sigma_2)$ where $h\ne 0$, $\rank\, S=2$.
Therefore, $\nu_1,\nu_2$ are determined uniquely.
A direct calculation shows that both are negative,
\begin{equation}\label{massRatioOnHneZero}
\nu_1=-\sin^2(\sigma_3)/\sin^2(\sigma_1),\quad
\nu_2=-\sin^2(\sigma_3)/\sin^2(\sigma_2).
\end{equation}
\end{proof}
\begin{remark}
As shown in (\ref{massRatioOnHeqZero}),
continuously many $(\nu_1,\nu_2)$
share the same $(\sigma_1,\sigma_2)$ on $h=0$.
\end{remark}

\begin{lemma}
The shape $\sigma_3=\sigma_1+\sigma_2$ on $h=0$
with the mass 
ratios 
given by equation (\ref{massRatioOnHeqZero})
satisfies the condition for $ERE$.
\end{lemma}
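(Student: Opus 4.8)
The plan is to verify directly that on the plane $\sigma_3=\sigma_1+\sigma_2$ restricted to the curve $h=0$, the shape $\sigma_k$ together with the mass ratios $(\nu_1,\nu_2)$ from equation (\ref{massRatioOnHeqZero}) satisfies the $ERE$ condition (\ref{eqForEREI}). Since we are on the meridian plane with $m_3$ between $m_1$ and $m_2$, the relevant $ERE$ condition is exactly $d=0$ with $d$ as defined in (\ref{eqForEREI}). So the lemma reduces to showing that $d=0$ holds under these two hypotheses.

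First I would recall that by the preceding lemma, on $h=0$ we have $\rank\,S=\rank\,\tilde S=1$, so the two rows of $S$ are proportional. The mass ratios in (\ref{massRatioOnHeqZero}) are precisely the one-parameter family of positive solutions of the single surviving linear relation $S_{11}\nu_1+S_{12}\nu_2+S_{13}=0$. The key observation I would exploit is that the $ERE$ quantity $d$ should be expressible as a linear combination of the two $LRE$ conditions $\lambda_{12}$ and $\lambda_{23}$ (or equivalently of the rows of $S$) when evaluated on the meridian plane. Concretely, I expect an identity of the form $d=\rho_1(\sigma)\,(S_{11}\nu_1+S_{12}\nu_2+S_{13})+\rho_2(\sigma)\,(S_{21}\nu_1+S_{22}\nu_2+S_{23})$ valid on $\sigma_3=\sigma_1+\sigma_2$, with explicit trigonometric coefficients $\rho_1,\rho_2$. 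Because the chosen mass ratios annihilate both rows (by $\rank\,S=1$ one vanishing implies the other), this would force $d=0$ immediately.

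The concrete steps I would carry out are: (i) substitute $\sigma_3=\sigma_1+\sigma_2$ into the expression (\ref{eqForEREI}) for $d$ and into the rows of $S$, simplifying the $\sin(2\sigma_k)$ and $\sin^2\sigma_k$ terms using angle-addition formulas so that everything is written in $\sigma_1,\sigma_2$ alone; (ii) substitute the mass ratios (\ref{massRatioOnHeqZero}) for $(\nu_1,\nu_2)$, clearing the common denominator $1-\cos(\sigma_3)\sin^3(\sigma_2)/\sin^3(\sigma_1)$; (iii) show the resulting numerator factors through $h$, so that $h=0$ yields $d=0$. I would organize step (iii) by extracting $h=\cos(3\sigma_3)-3\cos(\sigma_3)+2\cos(2\sigma_3)\cos(\sigma_1-\sigma_2)$ as a common factor, which is the natural candidate since $h=0$ is exactly the locus where rank drops.

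The main obstacle will be the bookkeeping in step (iii): after substitution, $d$ becomes a ratio of trigonometric polynomials in $\sigma_1,\sigma_2$, and confirming that the numerator is divisible by $h$ (rather than merely vanishing on the one-dimensional curve $h=0$ inside the plane) requires careful factorization rather than a plausibility check. A cleaner route, which I would attempt first to avoid brute force, is to use the rank-one structure abstractly: since on $h=0$ the row vector $(S_{11},S_{12},S_{13})$ and $(S_{21},S_{22},S_{23})$ are parallel and the chosen $(\nu_1,\nu_2,1)$ lies in their common kernel, it suffices to check that the $ERE$ gradient condition $d$ coincides, up to a nonzero scalar on the meridian plane, with one of these rows contracted against $(\nu_1,\nu_2,1)$. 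Establishing that single proportionality identity on $\sigma_3=\sigma_1+\sigma_2$ is a finite trigonometric computation independent of $h$, and then $h=0$ plus $\rank\,S=1$ finishes the argument without any divisibility analysis.
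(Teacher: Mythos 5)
Your concrete steps (i)--(iii) are essentially the paper's own proof. The authors substitute (\ref{massRatioOnHeqZero}) (which is nothing but $\lambda_{12}=0$ solved for $\nu_2$) into the $ERE$ condition (\ref{eqForEREI}) and exhibit the explicit factorization
\[
d=\sin(\sigma_1-\sigma_2)\, h\, r ,
\]
where $r$ is an explicit ratio of trigonometric expressions, linear in $\nu_1$, whose denominator contains the factor $1-\cos(\sigma_3)\sin^3(\sigma_2)/\sin^3(\sigma_1)$; this is nonzero on $h=0$ because $\cos\sigma_3<0$ there, so $r$ is finite and $d$ vanishes on $h=0$. That is exactly the divisibility statement you anticipate in step (iii), so your fallback plan succeeds and coincides with the paper's argument.

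The ``cleaner route'' you say you would attempt first, however, rests on a false identity, and it is worth seeing why it must fail. A single-row proportionality $d=\rho\,(S_{11}\nu_1+S_{12}\nu_2+S_{13})$ holding identically on the plane $\sigma_3=\sigma_1+\sigma_2$ would force $d$ to vanish everywhere on the plane once $\lambda_{12}=0$ is imposed; but the factorization above shows that after imposing $\lambda_{12}=0$ one gets $d=\sin(\sigma_1-\sigma_2)\,h\,r$, which is generically nonzero off $h=0$ (a numerical check at, e.g., $(\sigma_1,\sigma_2,\sigma_3)=(\pi/6,\pi/4,5\pi/12)$ confirms that the coefficient vector of $d/m_3$, as an affine-linear function of $(\nu_1,\nu_2)$, is parallel to neither row of $S$). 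The two-row identity $d=\rho_1(\sigma)(S_{11}\nu_1+S_{12}\nu_2+S_{13})+\rho_2(\sigma)(S_{21}\nu_1+S_{22}\nu_2+S_{23})$ is indeed true wherever $\rank S=2$: one can check that the coefficient vector of $d/m_3$ annihilates the kernel vector (\ref{massRatioOnHneZero}) identically, hence lies in the two-dimensional row space of $S$ at all such points. But this does not pass to the locus $h=0$, which is where the lemma lives: there the row space collapses to dimension one, the coefficients $\rho_1,\rho_2$ obtained off $h=0$ need not remain bounded as $h\to 0$, and the assertion that the coefficient vector of $d$ lies in the collapsed one-dimensional row space on $h=0$ is precisely the statement of the lemma. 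So the abstract rank-one argument is circular exactly at the points where it is needed, and the explicit computation with the factor $h$ extracted --- the paper's route, your steps (i)--(iii) --- is what actually carries the proof.
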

\begin{proof}
The condition for $ERE$ with $\sigma_3=\sigma_1+\sigma_2$
is given by equation (\ref{eqForEREI}).
Substitution of the relation (\ref{massRatioOnHeqZero})
into  (\ref{eqForEREI}) yields
\begin{equation}\label{DeqZero}
d=\sin(\sigma_1-\sigma_2)\, h\, r
=0 \quad \mbox{ on  } \quad h=0.
\end{equation}
Where 
\begin{equation}
\begin{split}
r=&\frac{\cos(\sigma_3)\big(1-\cos(2\sigma_3)+\nu_1(1-\cos(2\sigma_1)\big)
\big(\cos(\sigma_3)-\cos(2\sigma_3)\cos(\sigma_1-\sigma_2)\big)}
{4\big(1-\cos(\sigma_3)\sin^3(\sigma_2)/\sin^3(\sigma_1)\big)
\sin^4(\sigma_1)\sin^2(\sigma_2)\sin^2(\sigma_3)}
\end{split}
\end{equation}
is finite on $h=0$ because $\cos(\sigma_3)<0$.
\end{proof}

\begin{lemma}
For a given point $(\sigma_1,\sigma_2)$ on $h=0$ and
$\nu_1,\nu_2>0$ which are related by equation (\ref{massRatioOnHeqZero}),
the continuation of $ERE$ 
from this point exists.
\end{lemma}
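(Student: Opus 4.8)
The plan is to realise the continuation of $ERE$ as the implicit curve cut out, for fixed masses, by the two functions defining an Euler relative equilibrium on a rotating meridian, and then to verify the hypothesis of the implicit function theorem stated in Section~\ref{shapes}. Such an $ERE$ is precisely a shape lying on the collinearity plane $f:=\sigma_3-\sigma_1-\sigma_2=0$ that also satisfies the Euler condition $g:=d=0$ of \eqref{eqForEREI}; for fixed $(\nu_1,\nu_2)$ these two equations cut out a one-dimensional curve in $(\sigma_1,\sigma_2,\sigma_3)$-space, which is exactly the continuation of $ERE$. By the preceding lemma the given point $p_0$ on $h=0$, with masses tied by \eqref{massRatioOnHeqZero}, already satisfies $f=0$ and $d=0$, so $p_0$ lies on this curve. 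Since $\nabla f=(-1,-1,1)$ is constant, the implicit function theorem applies as soon as $\nabla f\times\nabla g\ne 0$, and a short check shows this is equivalent to $\nabla d$ not being a multiple of $(-1,-1,1)$, i.e.\ to the gradient of $d$ taken along the plane $\sigma_3=\sigma_1+\sigma_2$ (in the variables $(\sigma_1,\sigma_2)$, after substituting $\sigma_3=\sigma_1+\sigma_2$) being nonzero at $p_0$.

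To evaluate this in-plane gradient I would exploit that $d$ is affine in the masses: dividing \eqref{eqForEREI} by $m_3$ gives $d/m_3=A\nu_1+B\nu_2+C$ with $A,B,C$ explicit trigonometric functions of the shape on the plane. Writing $\Phi$ for the relation \eqref{massRatioOnHeqZero} that expresses $\nu_2$ in terms of the shape and $\nu_1$, affineness in $\nu_2$ gives $d=\,d|_{\nu_2=\Phi}+B\,(\nu_2-\Phi)$, and the factorisation $d|_{\nu_2=\Phi}=\sin(\sigma_1-\sigma_2)\,h\,r$ established in the previous lemma (with $r$ finite) lets me differentiate explicitly. Using $h(p_0)=0$ together with $\nu_2=\Phi(p_0)$, every term carrying a factor $h$ or a factor $(\nu_2-\Phi)$ drops out, and one is left with
\[
\nabla d\big|_{p_0}
=\sin(\sigma_1-\sigma_2)\,r(p_0)\,\nabla h\big|_{p_0}
-B(p_0)\,\nabla\Phi\big|_{p_0}.
\]
Here the coefficient of $\nabla h$ is nonzero at every interior point of $h=0$: one has $\sigma_1\ne\sigma_2$ except at the single vertex $(\sigma_E,\sigma_E,2\sigma_E)$, the factor $r(p_0)$ is finite and nonzero because $\cos\sigma_3<0$ on $h=0$, and $\nabla h|_{p_0}\ne 0$ since the curve $h=0$ is regular by the strict monotonicity recorded in \eqref{cosDelta}.

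The hard part will be ruling out an exact cancellation between the two terms, i.e.\ excluding the degenerate alignment in which $\sin(\sigma_1-\sigma_2)\,r\,\nabla h$ and $B\,\nabla\Phi$ coincide at $p_0$ for some admissible $\nu_1$ along the relation line. I expect to settle this by a direct evaluation in the spirit of Proposition~\ref{bifurcationLREandEREforPartialEqualMass}: substituting the explicit $(\nu_1,\nu_2)$ from \eqref{massRatioOnHeqZero} and the explicit $h$, $\Phi$, $B$, $r$, the in-plane gradient reduces to an explicit vector-valued function on $h=0$ that does not vanish. The isolated vertex $\sigma_1=\sigma_2=\sigma_E$, where the $\nabla h$ contribution degenerates, is treated separately through the surviving $-B\,\nabla\Phi$ term, consistently with the continuation already found at $p_E$ in Proposition~\ref{bifurcationLREandEREforPartialEqualMass}. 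Once $\nabla d|_{p_0}\ne 0$ is confirmed, equivalently $\nabla f\times\nabla g\ne 0$, the implicit function theorem produces the continuation of $ERE$ through $p_0$, completing the proof.
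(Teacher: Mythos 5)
Your reduction coincides with the paper's: work in the plane $\sigma_3=\sigma_1+\sigma_2$ with the single Euler condition $d=0$ of \eqref{eqForEREI}, substitute $\nu_2$ from \eqref{massRatioOnHeqZero}, and show the in-plane gradient $\nabla d=(\partial_{\sigma_1},\partial_{\sigma_2})d$ is nonzero at $p_0$; your decomposition $\nabla d|_{p_0}=\sin(\sigma_1-\sigma_2)\,r\,\nabla h-B\,\nabla\Phi$ is also correct as far as it goes. But the argument stops exactly at the decisive step: you yourself flag ``the hard part'' --- ruling out cancellation between the two terms --- and only announce that you ``expect to settle this by a direct evaluation.'' That is a genuine gap, not a formality. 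Both $r$ and $\Phi$ depend affinely on $\nu_1$ (the paper's explicit $r$ carries $\nu_1\bigl(1-\cos(2\sigma_1)\bigr)$ in its numerator), so the alignment must be excluded simultaneously for a one-parameter family of mass ratios at \emph{each} point of $h=0$, and nothing in your proposal does this. The paper's missing idea is a compact device that handles all $\nu_1$ at once: after the substitution, $\nabla d$ is affine in $\nu_1$, hence ${}^t(\nabla d)=M\,{}^t(\nu_1 \,\ 1)$ for a $2\times 2$ matrix $M$ depending only on the shape, and a single computation on $h=0$ yields
\begin{equation*}
\det M=\frac{8\sin(\sigma_1-\sigma_2)\sin^5(\sigma_3)\bigl(3+3\cos(2\sigma_3)+\cos(4\sigma_3)\bigr)}
{\cos^2(2\sigma_3)\sin^4(\sigma_1)\sin^4(\sigma_2)\bigl(1-\cos(\sigma_3)\sin^3(\sigma_2)/\sin^3(\sigma_1)\bigr)}
\ne 0 \quad\text{for }\sigma_1\ne\sigma_2,
\end{equation*}
so that $\nabla d=M\,{}^t(\nu_1,1)\ne 0$ for \emph{every} $\nu_1$ simultaneously. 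Your two-term formula is just a factored form of $M\,{}^t(\nu_1,1)$; without the determinant, or an equally explicit global evaluation, the non-cancellation remains an unproven claim and the lemma is not established.

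A second defect is your treatment of the vertex $\sigma_1=\sigma_2=\sigma_E$. There $\det M=0$ (the factor $\sin(\sigma_1-\sigma_2)$ vanishes), and you propose to conclude from the surviving term $-B\,\nabla\Phi$ without verifying that it is nonzero; moreover, Proposition \ref{bifurcationLREandEREforPartialEqualMass}, which you invoke, proves the continuation of the \emph{LRE} from $p_E$ by showing $\nabla\lambda_{23}\ne 0$, not of the $ERE$. The paper avoids any gradient computation at this point: $\sigma_1=\sigma_2$ forces $m_1=m_2$ by Proposition \ref{equalSigmaNeedsEqualMass}, and for $m_1=m_2$ one checks directly from \eqref{eqForEREI} that $d$ vanishes identically along the diagonal $\sigma_1=\sigma_2$ of the plane $\sigma_3=\sigma_1+\sigma_2$, so the isosceles $ERE$ continuation through $p_E$ exists trivially. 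In fact this identical vanishing of $d$ along the diagonal warns that a gradient-based argument at the vertex, such as the one you sketch, may simply fail, since $\nabla d$ has no a priori reason to be nonzero there.
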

\begin{proof}
The substitution of $\nu_2$ given by equation  (\ref{massRatioOnHeqZero})
into $\nabla d=(\partial_{\sigma_1},\partial_{\sigma_2})d$ gives
\begin{equation}
{}^t (\nabla d)
=M\left(\begin{array}{c}\nu_1 \\1\end{array}\right),
\end{equation}
where $M$ is a two by two matrix of functions of $(\sigma_1,\sigma_2)$.
Now, a direct calculation using $h=0$ yields
\begin{equation}
\begin{split}
\det M 
&= \frac{8\sin(\sigma_1-\sigma_2)\sin^5(\sigma_3)\big(3+3\cos(2\sigma_3)+\cos(4\sigma_3)\big)}
{\cos^2(2\sigma_3)\sin^4(\sigma_1)\sin^4(\sigma_2)
\big(1-\cos(\sigma_3)\sin^3(\sigma_2)/\sin^3(\sigma_1)\big)}\\
&\ne 0
\mbox{ on } h=0 \mbox{ and }\sigma_1\ne\sigma_2.
\end{split}
\end{equation}
Therefore, if $\sigma_1\ne\sigma_2$, $\nabla d\ne 0$.
By the implicit function theorem,
there is a continuation of 
$ERE$
from this point.

For the case $\sigma_1=\sigma_2$, we know from by Proposition \ref{equalSigmaNeedsEqualMass} that
$m_1=m_2$.
The existence of the continuation of $ERE$ with $\sigma_1=\sigma_2$
is obvious.
\end{proof}

\begin{lemma}\label{continuationLREexists}
For a given point $(\sigma_1,\sigma_2)$ on $h=0$ and
$\nu_1,\nu_2>0$ that are related by equation (\ref{massRatioOnHeqZero}),
the continuation of 
$LRE$ 
from this point exists.
\end{lemma}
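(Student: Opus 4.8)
The goal mirrors the preceding lemma (continuation of $ERE$), so the natural strategy is to reuse the same machinery: produce two functions whose common zero locus is exactly $LRE$, verify they vanish at the given point, and show their gradients are linearly independent there so the implicit function theorem yields a one-dimensional continuation. Concretely, $LRE$ is the solution set of $\lambda_{12}=\lambda_{23}=0$, so I would apply the implicit function theorem to $f=\lambda_{12}$ and $g=\lambda_{23}$ at the point $(\sigma_1,\sigma_2,\sigma_3)$ on $h=0$ with $\sigma_3=\sigma_1+\sigma_2$, using the fixed mass ratios $(\nu_1,\nu_2)$ related by \eqref{massRatioOnHeqZero}. The earlier remark that the implicit function theorem guarantees a continuation precisely when $\nabla f\times\nabla g\neq 0$ tells me exactly what to check.

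First I would confirm that the point actually lies on the $LRE$ locus. Since $\rank\,S=\rank\,\tilde S=1$ on $h=0$ (Lemma \ref{theMeaningOfH}), the single surviving linear relation among the $\nu_k$ is \eqref{massRatioOnHeqZero}, and by construction the chosen $(\nu_1,\nu_2)$ satisfy it; hence $\lambda_{12}=\lambda_{23}=0$ holds at the point. This is the easy half and parallels the $ERE$ argument, where the same point was shown to satisfy \eqref{eqForEREI}.

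The crux is the nondegeneracy condition $\nabla\lambda_{12}\times\nabla\lambda_{23}\neq 0$, where the gradients are now the full three-dimensional $\nabla=(\partial_{\sigma_1},\partial_{\sigma_2},\partial_{\sigma_3})$ rather than the restricted two-dimensional gradient used for $ERE$. I would substitute $\sigma_3=\sigma_1+\sigma_2$ and the relation \eqref{massRatioOnHeqZero} for $\nu_2$ in terms of $\nu_1$, reducing the cross product to a vector-valued function of $(\sigma_1,\sigma_2)$ that is affine in $\nu_1$, just as $\nabla d$ was written as $M\,{}^t(\nu_1,1)$ in the previous lemma. The computation should yield an expression that, after using $h=0$, factors so that no component vanishes identically; I expect a factor such as $\cos(\sigma_3)$ (which is strictly negative on $h=0$, as repeatedly exploited above) together with a manifestly positive trigonometric factor like $3+3\cos(2\sigma_3)+\cos(4\sigma_3)$ to survive, forcing $\nabla\lambda_{12}\times\nabla\lambda_{23}\neq 0$ for all admissible $\nu_1>0$.

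The main obstacle is algebraic rather than conceptual: the cross product of two three-dimensional gradients of the complicated rational-trigonometric functions $\lambda_{12},\lambda_{23}$ is far heavier than the $2\times2$ determinant $\det M$ computed for $ERE$, and I must show the resulting vector is nonzero rather than merely a scalar. The delicate case will again be $\sigma_1=\sigma_2$, forcing $m_1=m_2$ by Proposition \ref{equalSigmaNeedsEqualMass}, where some components of the generic cross product may vanish by antisymmetry; there I would instead invoke Property \ref{factorisation} and argue directly that the $LRE$ locus remains a smooth curve through the point, consistent with Definition \ref{defBifurcationPoint}. Once nondegeneracy is established in both cases, the implicit function theorem delivers the continuation of $LRE$ from the point, completing the proof.
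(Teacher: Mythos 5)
Your proposal follows essentially the same route as the paper's proof: confirm the point lies on the $LRE$ locus via the rank-one structure of Lemma \ref{theMeaningOfH}, substitute $\sigma_3=\sigma_1+\sigma_2$, \eqref{cosDelta} and the mass relation \eqref{massRatioOnHeqZero} into $c=\nabla\lambda_{12}\times\nabla\lambda_{23}$, show $c\ne 0$ when $\sigma_1\ne\sigma_2$, and dispose of the case $\sigma_1=\sigma_2$ (which forces $m_1=m_2$ and $\sigma_1=\sigma_2=\sigma_E$) by the earlier isosceles result, Proposition \ref{bifurcationLREandEREforPartialEqualMass}. One small correction: after eliminating $\nu_2$, each component of $c$ is quadratic, not affine, in $\nu_1$ — the paper writes $c=A\,{}^t(\nu_1^2,\nu_1,1)$ and shows $\det A\ne 0$ on $h=0$ away from $\sigma_1=\sigma_2$ (the determinant carries the factor $\sin(\sigma_1-\sigma_2)\,n$ with $n\ne 0$), which still forces $c\ne 0$ for every $\nu_1>0$ because the vector $(\nu_1^2,\nu_1,1)$ never vanishes.
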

\begin{proof}
Consider $c=\nabla\lambda_{12}\times\nabla\lambda_{23}$
at the point $(\sigma_1,\sigma_2)$,
where $\nabla=(\partial_{\sigma_1},\partial_{\sigma_2},\partial_{\sigma_3})$.
Substituting $\nu_2$ given by equation (\ref{massRatioOnHeqZero}),
we obtain that each component of $c$ has the form $c_i=a_{i1}\nu_1^2+a_{i2}\nu_1+a_{i3}$,
where the $a_{ij}$ are functions of $(\sigma_1,\sigma_3)$.
Namely,
\begin{equation}
A\left(\begin{array}{c}\nu_1^2 \\\nu_1 \\1\end{array}\right)=c.
\end{equation}
Substituting $\sigma_3=\sigma_1+\sigma_2$ and 
using $\cos(\sigma_1-\sigma_2)$ in (\ref{cosDelta}),
we get
\begin{equation}
\begin{split}
\det A
%=\frac{32\sin^2(\sigma_1)\cos^5(\sigma_3)\sin^{17}(\sigma_3)}
%	{\sin^7(\sigma_2)\cos^7(2\sigma_3)}\,\,
%	\frac{\sin(\sigma_1-\sigma_2)n}{d^3},\\
=\frac{\sin^{10}(\sigma_3)}
	{4\sin^7(\sigma_1)\sin^7(\sigma_2)\cos^2(2\sigma_3)}\,\,
	\frac{\sin(\sigma_1-\sigma_2)\,\,n}
		{\big(1-\cos(\sigma_3)\sin^3(\sigma_2)/\sin^3(\sigma_1)\big)^3},
\end{split}
\end{equation}
where
\begin{equation}
\begin{split}
n=&-710 - 1108 \cos(2 \sigma_3) - 449 \cos(4 \sigma_3) - 
  100 \cos(6 \sigma_3)\\
  	&+ 6 \cos(8 \sigma_3) + 8 \cos(10 \sigma_3) + 
  \cos(12 \sigma_3).
  %
%d=&1-\cos(\sigma_3)\sin^3(\sigma_2)/\sin^3(\sigma_1).
%
%=9 \sin(\sigma_1) - 2 \sin(3 \sigma_1) - 
%  \sin(\sigma_1 - 2 \sigma_2) - 3 \sin(\sigma_1 + 2 \sigma_2) + 
%  \sin(\sigma_1 + 4 \sigma_2).
\end{split}
\end{equation}
We can show that $n\ne 0$  on $h=0$
by a simple calculation.

Therefore, $c\ne 0$ if $\sigma_1 \ne \sigma_2$.
By the implicit function theorem, there is a continuation of
$LRE$ from this point.

For the case $\sigma_1=\sigma_2$, we obtain 
$m_1=m_2$ by Proposition \ref{equalSigmaNeedsEqualMass}.
For this case, $\sigma_1=\sigma_2=\sigma_E$.
The existence of the continuation of $LRE$ from this point
is shown in Proposition \ref{bifurcationLREandEREforPartialEqualMass}.
\end{proof}

The lemmas \ref{theMeaningOfH} to \ref{continuationLREexists}
prove Theorem \ref{BifurcationsEREandLRE}.

\subsubsection{Number of bifurcation points  on $\sigma_3=\sigma_1+\sigma_2$ for given mass ratio}
In the previous Theorem we showed that
any point on the curve $h=0$ located on the plane $\sigma_3=\sigma_1+\sigma_2$ is a bifurcation point between $ERE$ and $LRE$
for continuously many $(\nu_1,\nu_2)$.

Then for given $(\nu_1,\nu_2)$,
how many bifurcation points exist on the plane,
and what $(\sigma_1,\sigma_2)$ if exist?
In this subsection,
we will give an equation to count the number,
and to find the position.

Substituting (\ref{cosDelta}) into (\ref{massRatioOnHeqZero}),
we get
\begin{equation}
\sin\Delta=
\frac{-\delta\nu \Big(16 + 11 \cos(2 \sigma_3) + \cos(6 \sigma_3)\Big) 
\sin(\sigma_3)}
{
 2\cos(2 \sigma_3) \Big( (1 + \cos(4 \sigma_3)) 
 	- \nu (5 + 4 \cos(2 \sigma_3) + \cos(4 \sigma_3))
       \Big)},
\end{equation}
where $\Delta=\sigma_2-\sigma_1$,
$\nu=(\nu_1+\nu_2)/2$ and $\delta\nu=(\nu_2-\nu_1)/2$.
Then,
$1-\cos^2(\Delta)=\sin^2(\Delta)$
with (\ref{cosDelta}) for $\cos\Delta$
will determine $\sigma_3$.
Then $\sin\Delta$ and $\cos\Delta$ will determine $\sigma_1$ and $\sigma_2$.

Note that $1-\cos^2(\Delta)$ with (\ref{cosDelta}) is
a decreasing function of $\sigma_3$, which takes the value 
$1$ for $\sigma_3=\pi/2$ and $0$ for $\sigma_3=2\sigma_E$.
On the other hand,
$\sin^2(\Delta)=(\delta\nu)^2/(1-\nu)^2$ for  $\sigma_3=\pi/2$
 and 
$(13+16\sqrt{2})\delta\nu^2/(2-3\nu)^2$
for $\sigma_3=2\sigma_E$.
Therefore, it may have $0,1,2$ or $3$ solutions of $\sigma_3$.

Then, we get the following lemma.
\begin{lemma}
For the case $|\delta\nu|<|1-\nu|$,
there is at least one bifurcation point between 
 $LRE$ and $ERE$
on the plane $\sigma_3=\sigma_1+\sigma_2$,
$\pi/2<\sigma_3\le2\sigma_E$.
\end{lemma}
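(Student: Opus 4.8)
The plan is to collapse the two-dimensional search on the plane $\sigma_3=\sigma_1+\sigma_2$ into a single intermediate-value argument in the variable $\sigma_3\in(\pi/2,2\sigma_E]$. On this plane the curve $h=0$ fixes $\cos\Delta$ (with $\Delta=\sigma_2-\sigma_1$) as the explicit function of $\sigma_3$ in (\ref{cosDelta}), while the mass-ratio relation (\ref{massRatioOnHeqZero}), rewritten through $\nu=(\nu_1+\nu_2)/2$ and $\delta\nu=(\nu_2-\nu_1)/2$, fixes $\sin\Delta=N(\sigma_3)/D(\sigma_3)$. A genuine angle $\Delta$, and hence an actual point $(\sigma_1,\sigma_2)$, exists exactly when these are compatible, i.e. when
\begin{equation}
P(\sigma_3):=1-\cos^2\Delta=\sin^2\Delta=:Q(\sigma_3).
\end{equation}
So I would treat the lemma as the statement that $P-Q$ has a zero on the interval. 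The function $P$ is already known (from the monotonicity of $\cos\Delta$ established just above) to be continuous and strictly decreasing from $P(\pi/2)=1$ to $P(2\sigma_E)=0$, so everything reduces to comparing $Q$ against $P$ at the two endpoints.

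First I would read off the endpoint values recorded above, $Q(\pi/2)=\delta\nu^2/(1-\nu)^2$ and $Q(2\sigma_E)=(13+16\sqrt2)\,\delta\nu^2/(2-3\nu)^2\ge0$. The hypothesis $|\delta\nu|<|1-\nu|$ is \emph{precisely} the inequality $Q(\pi/2)<1=P(\pi/2)$, so $P-Q>0$ at the left endpoint; note that $|\delta\nu|<|1-\nu|$ already forces $\nu\ne1$, so this value is finite. At the right endpoint $P-Q=-Q(2\sigma_E)\le0$, with strict inequality when $\delta\nu\ne0$. Thus $P-Q$ changes sign (or vanishes exactly at $\sigma_3=2\sigma_E$), and the intermediate value theorem produces a root $\sigma_3^\ast\in(\pi/2,2\sigma_E]$. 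Reading $\cos\Delta$ from (\ref{cosDelta}) and the sign of $\sin\Delta$ from (\ref{massRatioOnHeqZero}) recovers $(\sigma_1,\sigma_2)$; since the given $(\nu_1,\nu_2)$ are positive mass ratios related by (\ref{massRatioOnHeqZero}) at this point, Theorem \ref{BifurcationsEREandLRE} certifies it as a bifurcation point between $LRE$ and $ERE$.

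The hard part is that $Q=\sin^2\Delta$ is a squared ratio whose denominator $D(\sigma_3)=2\cos(2\sigma_3)\,[(1+\cos 4\sigma_3)-\nu(5+4\cos 2\sigma_3+\cos 4\sigma_3)]$ can vanish inside $(\pi/2,2\sigma_E)$: a short computation shows $\cos2\sigma_3\ne0$ there, but the bracket vanishes for suitable $\nu$ (the critical value runs monotonically from $1$ at $\sigma_3=\pi/2$ down to $2/3$ at $\sigma_3=2\sigma_E$), so $Q$ may have a pole and the naive IVT on $P-Q$ is not immediately valid. I would neutralise this by clearing denominators and applying the IVT to the manifestly continuous function $\Psi(\sigma_3)=P(\sigma_3)\,D(\sigma_3)^2-N(\sigma_3)^2$, for which the same endpoint computation gives $\Psi(\pi/2)=16[(1-\nu)^2-\delta\nu^2]>0$ and $\Psi(2\sigma_E)=-N(2\sigma_E)^2\le0$, hence a root by continuity alone. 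The only thing left to rule out is a spurious root where $N=D=0$ (giving $\sin\Delta=0/0$); but the explicit trigonometric factor $16+11\cos2\sigma_3+\cos6\sigma_3$ in $N$ stays strictly positive on the interval, so $N$ vanishes only when $\delta\nu=0$. Therefore, for $\delta\nu\ne0$ every zero of $\Psi$ automatically has $D\ne0$ and yields a bona fide angle $\Delta$, while the degenerate case $\delta\nu=0$ (equal pair $m_1=m_2$) simply returns the already-known coupling $(\sigma_E,2\sigma_E)$ at the closed right endpoint, consistent with the statement.
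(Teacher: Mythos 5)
Your proof is correct, and its skeleton is the same as the paper's: both arguments reduce the lemma to an intermediate-value comparison of $1-\cos^2\Delta$ (decreasing from $1$ to $0$) against $\sin^2\Delta$ on $\sigma_3\in(\pi/2,2\sigma_E]$, with the hypothesis $|\delta\nu|<|1-\nu|$ entering exactly as the left-endpoint inequality $\sin^2\Delta(\pi/2)<1$. Where you genuinely diverge is in the treatment of the pole of $\sin^2\Delta$. The paper splits into cases on $\nu$: if $\nu\notin[2/3,1]$ the denominator never vanishes and the IVT applies directly; if $\nu\in[2/3,1]$ it argues that $\sin^2(\Delta)\to+\infty$ at the pole while $1-\cos^2\Delta\le 1$, so the two graphs must cross before the pole. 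You instead clear denominators, apply the IVT to the everywhere-continuous $\Psi=PD^2-N^2$ with $\Psi(\pi/2)=16[(1-\nu)^2-\delta\nu^2]>0$ and $\Psi(2\sigma_E)=-N(2\sigma_E)^2\le 0$, and then exclude spurious roots with $N=D=0$ via the strict positivity of $16+11\cos(2\sigma_3)+\cos(6\sigma_3)$. Your route buys uniformity and a little extra rigor: there is no case analysis in $\nu$, the degenerate case $\delta\nu=0$ (which lands exactly on the known coupling $(\sigma_E,\sigma_E,2\sigma_E)$ and where the paper's ``goes to plus infinity'' phrase would actually fail, since then $\sin^2\Delta\equiv 0$) is handled explicitly, and the possibility of a $0/0$ point is ruled out rather than ignored. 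The paper's version buys brevity, and its blow-up argument also localizes the root to the left of the pole, which yours does not attempt. Both are valid proofs of the statement.
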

\begin{proof}
The denominator of $\sin\Delta$ has zero
if $2/3\le \nu\le 1$.
If $\nu$ is outside of this range,
the result is obvious because $\sin\Delta$ is a continuous function.
If  $\nu$ is in this range,
the function $\sin^2(\Delta)$ is not continuous
but goes to plus infinity at the point.
Therefore, the equation $1-\cos^2(\Delta)=\sin^2(\Delta)$ has
at least one solution.
\end{proof}
\begin{figure}
   \centering
   \includegraphics[width=12cm]{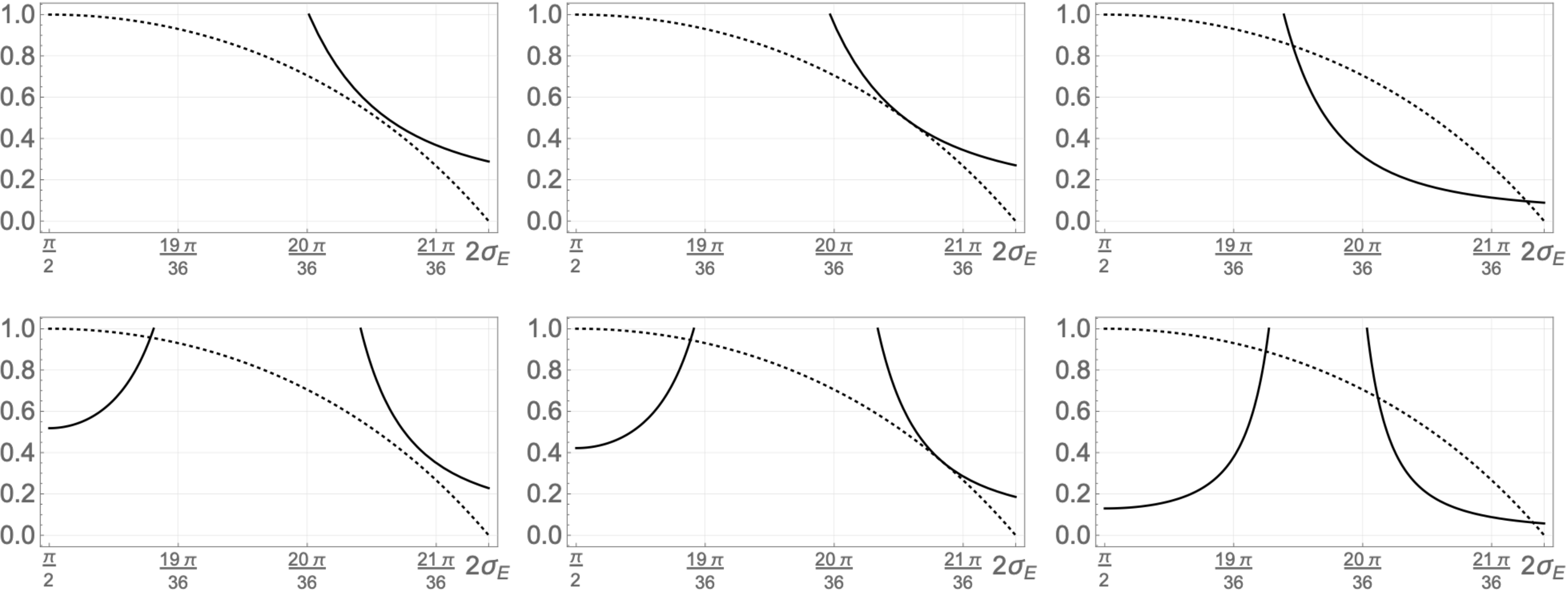}
   \caption{
   The dashed curve and solid curve represents
   $1-\cos^2(\Delta)$ and $\sin^2(\Delta)$ respectively,
   the horizontal axis is $\sigma_3$.
    Upper row from left to right:
	$(\nu,\delta\nu)$ are
	$(1,0.09)$, $(1,0.0870...)$, $(1,0.05)$.
	The number of bifurcation points (intersections) are
	$0,1,2$.
	Lower row from left to right:
	$(\nu,\delta\nu)=(11/12,0.06),(11/12,0.0541...),
	(11/12,0.03)$.
	The number of bifurcation points are $1,2,3$.
    }
   \label{figNumberOfBifurcationPointsOnSigma3}
\end{figure}
Then, the following result is obvious.
\begin{proposition}\label{theHeaviestMass}
For $m_i<m_j<m_k$,
there is at least one bifurcation point
of $LRE$ and $ERE$ 
on the plane $\sigma_k=\sigma_i+\sigma_j$, $\pi/2<\sigma_k\le2\sigma_E$.
\end{proposition}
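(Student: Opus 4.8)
The plan is to obtain this statement as an immediate corollary of the immediately preceding Lemma, so the only substantive task is to check that the mass ordering $m_i<m_j<m_k$ is exactly the hypothesis $|\delta\nu|<|1-\nu|$ of that Lemma, once the labels are assigned correctly. First I would put the heaviest mass $m_k$ into the role of $m_3$ in the Lemma and the two lighter masses $m_i,m_j$ into the roles of $m_1,m_2$. This assignment is the natural one, because the plane $\sigma_k=\sigma_i+\sigma_j$ is precisely the rotating-meridian configuration in which $m_k$ lies between the other two masses, i.e.\ the situation $\sigma_3=\sigma_1+\sigma_2$ treated in the Lemma.

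With this identification, $\nu_1=m_i/m_k$ and $\nu_2=m_j/m_k$. Since both lighter masses are strictly smaller than $m_k$, we get $\nu_1,\nu_2<1$, hence $\nu=(\nu_1+\nu_2)/2<1$ and $|1-\nu|=1-\nu$. Because the quantity $\delta\nu=(\nu_2-\nu_1)/2$ enters only through $|\delta\nu|$, I may assume without loss of generality $m_i\le m_j$ for the two lighter masses. Clearing the common denominator $2m_k$, the inequality $|\delta\nu|<|1-\nu|$ then reads
\begin{equation}
m_j-m_i<2m_k-(m_i+m_j),
\end{equation}
which simplifies to $m_j<m_k$. This is guaranteed by the strict ordering, so the hypothesis of the Lemma is satisfied automatically.

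Once $|\delta\nu|<|1-\nu|$ is verified, the preceding Lemma gives at least one bifurcation point between $LRE$ and $ERE$ on the plane $\sigma_k=\sigma_i+\sigma_j$ with $\pi/2<\sigma_k\le 2\sigma_E$, which is exactly the claim. I expect no genuine analytic difficulty here: all the real work (constructing the curve $h=0$, the mass-ratio formula, and the two implicit-function-theorem arguments) has already been carried out in establishing Theorem~\ref{BifurcationsEREandLRE} and the counting Lemma. The single point that deserves a moment of care is the bookkeeping---confirming that assigning the heaviest mass to the middle position on the meridian is what forces the balanced condition $|\delta\nu|<1-\nu$, and that the symmetric roles of the two lighter masses render the absolute value $|\delta\nu|$ harmless.
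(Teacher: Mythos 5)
Your proposal is correct and follows essentially the same route as the paper: both reduce the statement to the preceding counting Lemma by identifying the heaviest mass with $m_3$ and checking that the ordering $m_i<m_j<m_k$ yields $m_j-m_i<2m_k-(m_i+m_j)$, i.e.\ $|\delta\nu|<|1-\nu|$. The paper's proof is just a terser version of your bookkeeping (it assumes $m_1<m_2<m_3$ without loss of generality and states the inequality directly), so there is no substantive difference.
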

\begin{proof}
Without loss of generality, we can assume $m_1<m_2<m_3$.
Then $m_2-m_1<2m_3-(m_1+m_2)$,
namely $|\delta \nu|<|1-\nu|$ is satisfied.
\end{proof}

\subsection{Numerical Results}
Figure \ref{figNumberOfBifurcationPointsOnSigma3}
shows  examples of $(\nu,\delta\nu)$
that give $0,1,2,3$ bifurcation points.
Although the case $(\nu,\delta\nu)=(1,0.09)$ has no bifurcation point
on the plane $\sigma_3=\sigma_1+\sigma_2$,
it has the bifurcation point on the plane $\sigma_2=\sigma_3+\sigma_1$
by Proposition \ref{theHeaviestMass}
because $m_2$ is the heaviest for this case.

\begin{figure}[htbp] 
   \centering
   \includegraphics[width=12cm]{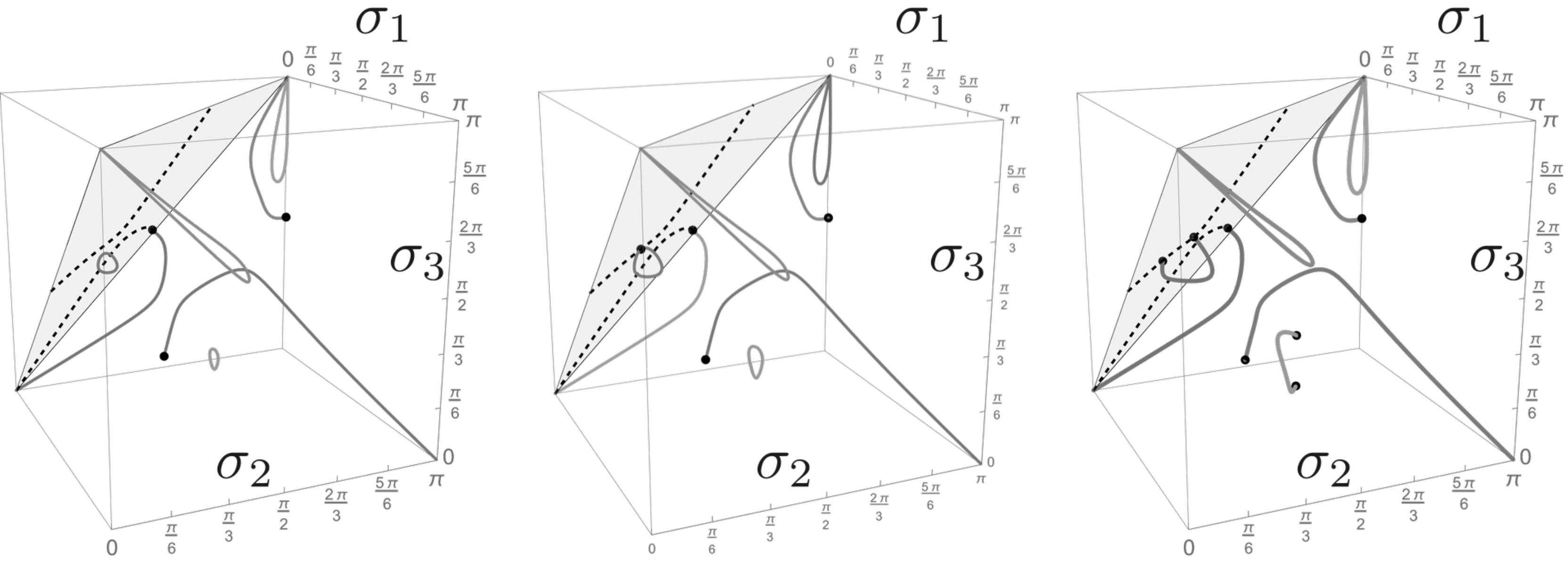}   \caption{Continuations of $LRE$ 
   on $\mathbb{S}^2$
   for $(\nu,\delta\nu)=(11/12,0.06)$(left),
   $(11/12,0.0541...)$(middle)
   and for $(11/12,0.03)$(right).
   %%%%
  They correspond to the lower diagrams
   in Figure \ref{figNumberOfBifurcationPointsOnSigma3}.
   Solid curves represents the continuations of $LRE$.
      There are seven continuations
     of $LRE$. 
     The black balls represent
     bifurcation points between $LRE$ and $ERE$.
     The grey plane corresponds to $\sigma_3=\sigma_1+\sigma_2$
   and the dashed curves on this plane are 
   $ERE$.
   We observe one bifurcation point between $LRE$ and $ERE$ on this plane (subfigure in the left side), two (subfigure in the middle),
   and three (subfigure in the right side).
%   They correspond to the lower diagrams
%   in Figure \ref{figNumberOfBifurcationPointsOnSigma3}.
   }
   \label{figContinuationsOfLRE}
\end{figure}

The corresponding continuations of 
$LRE$ for
$(\nu,\delta\nu)=(11/12,0.06)$, $(11/12,0.0541...)$, and 
$(11/12,0.03)$ 
are shown in Figure \ref{figContinuationsOfLRE}.
The grey plane corresponds to $\sigma_3=\sigma_1+\sigma_2$.
Left: At $(\nu,\delta\nu)=(11/12,0.06)$,
there is only one bifurcation point
between \olre{} and \oere.
As you can see in Fig. \ref{figContinuationsOfLRE}, 
there is a small loop near the plane, which does not reach the plane.
Therefore, the loop doesn't yields a bifurcation point.
As $\delta\nu$ is smaller,
the loop is larger.
Middle: When $\delta\nu=0.0541...$, the loop touches the plane
at one point,
which is a new bifurcation point.
%Fujiwara: the followings are removed
%(The middle diagram in Figure \ref{figContinuationsOfLRE} has been six times magnified 
%to clearly show that the loop is touching the plane.)
Thus, there are two  bifurcation points for $(11/12,0.0541...)$
as shown in Figure \ref{figNumberOfBifurcationPointsOnSigma3}.
The bifurcated $ERE$ is different from the \oere.
Decreasing $\delta\nu$ the loop gets bigger and then
intersects the plane.
Right: Thus, we have three bifurcation points at $(11/12,0.03)$.

In Figure \ref{figContinuationsOfLRE},
another similar small loop (left) and open arc (right)
which bifurcate to $ERE$ on the plane $\sigma_2=\sigma_3+\sigma_1$
is shown.

The continuation which starts at $(\pi,\pi,0)\in U$ 
bifurcates to $ERE$ on $\sigma_1=\sigma_2+\sigma_3$.
The end point of the continuation
that starts at $(0,\pi,\pi) \in U$
is $ERE$ on the equator.

We see other two loops that start and end
at $(0,\pi,\pi)$ or $(\pi,0,\pi)$. They are elements of the seven continuations in Figure \ref{figContinuationsOfLRE},
where the mass ratios $m_1:m_2:m_3$ are not so far from $1:1:1$.

\begin{figure}
   \centering
   \includegraphics[width=10cm]{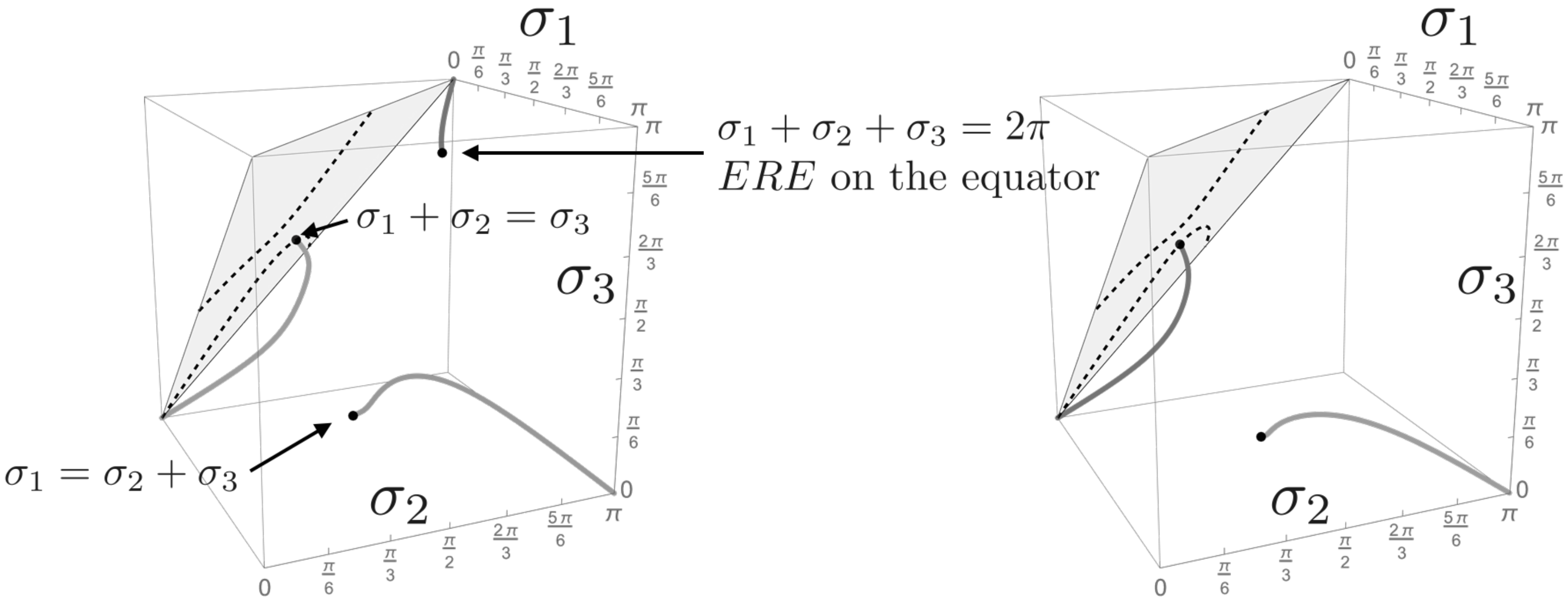}
   \caption{Continuations of $LRE$ and 
   continuations of
   $ERE$ on $\sigma_3=\sigma_1+\sigma_2$ plane.
   Left: For $m_1:m_2:m_3=1:2:4$,
   namely $(\nu,\delta\nu)=(3/8,1/8)$.
   The continuation from $(0,\pi,\pi)\in U$ ends at the $ERE$ on the equator.
   Right: For $m_1:m_2:m_3=1:2:12$,
   $(\nu,\delta\nu)=(1/8,1/24)$.
   At this mass ratio there is no $ERE$ on the equator. 
   So there is no continuation from it.
   }
   \label{figContinuationsForm1m2m31:2:4}
\end{figure}

Figure \ref{figContinuationsForm1m2m31:2:4}
shows the continuations for
$m_1:m_2:m_3=1:2:4$
and $1:2:12$.
No  loop continuations are seen.
Three continuations exist for $m_1:m_2:m_3=1:2:4$.
On the other hand,
since the mass ratio 
$1:2:12$
does not satisfy the condition
for $ERE$ on the equator (\ref{ConditionEREonEquator}),
the $ERE$ and the continuation 
of $LRE$
from this point does not exist.
Thus, there are only two continuations
of $LRE$.

The numerical experiments show a couple of interesting properties
of the continuation of $LRE$, $ERE$, and the bifurcation between them.

(1) The \olre\ continuation and one of
the \olre\ continuations are directly connected
by the bifurcation point.
The \olre continuation
reaches the plane $\sigma_3=\sigma_1+\sigma_2$,
then bifurcates to $ERE$,
where $m_3$ (the heaviest mass) is placed in the middle,
Then it continues  back to  
%the \olre\ continuation
the \oere\ continuation 
keeping $m_3$ (the heaviest mass)  in the middle.
%\textcolor{blue}{I think $\uparrow$ this must be \oere.
%The continuation is
%\olre $\rightarrow$ the bifurcation point $\rightarrow$ \oere}.
%\textcolor{magenta}{Yes, you are right}
See Figures \ref{figContinuationsOfLRE} 
and \ref{figContinuationsForm1m2m31:2:4}.
%\textcolor{blue}{I did some changes in the following paragraph}
Note that Proposition \ref{theHeaviestMass}
ensures the existence of such bifurcation point between
a $LRE$ and an $ERE$ continuations,
but not ensures that the continuations are \olre\ and \oere.

\medskip

(2)  There are at most $7$ continuations of $LRE$.
Among them, 
the \olre\ continuation
and the continuation of  $LRE$ from $ERE$ on the equator
sharing the same ordering of $\sigma_\ell$,
namely $\sigma_1<\sigma_2<\sigma_3$ if $m_1<m_2<m_3$
by Propositions \ref{PropCont3} and \ref{PropContOfEREOnEquator}.
The other $5$ continuations have mutually different ordering.
Therefore, all possible $6$ orderings of 
$\sigma$'s 
are realised
by the $7$ continuations.

%555%%%%%%%%%%%%%%%%%%%%%%%%%%%%%%%%%%%%%%%%%%%%%%%%%%%%%%

\section{Conclusions and final comments}\label{conclusions}
The variables $(\sigma_1,\sigma_2,\sigma_3)\in U_\textrm{phys}$
for having $LRE$ on $\mathbb{S}^2$
is determined by the two conditions $\lambda_{12}=\lambda_{23}=0$.
Therefore, $LRE$ have one-dimensional intersection curve, namely one-dimensional continuation, in general.
Similarly, $ERE$ on $\mathbb{S}^2$ have one-dimensional continuation.

We have proved the local existence of the continuations.
The proof for the global existence of such continuation still need a better understanding of the surfaces defined by $\lambda_{ij}=0$.

Special attention was paid to 
the \olre\ continuation.
On the Euclidean plane $\mathbb{R}^2$,
there are two isolated Lagrangian equilateral configurations that 
have opposite orientation,
and three isolated Euler configurations.
We have shown that on $\mathbb{S}^2$,
with almost all mass ratios,
the \olre\ continuation
(two almost equilateral configurations
with opposite orientations) and
one \oere\ continuation,
where the heaviest mass is placed between the other two masses,
are connected by the continuations
via bifurcation(s).
See Figure~\ref{figContinuationAndConfigurations}.
This is true for the mass ratios
$m_i\le m_j<m_k$ and $m_i=m_j=m_k$,
but not true for $m_i<m_j=m_k$.
See the $\nu>1$ continuation in Figure \ref{figContourForNu}.
\begin{figure}
   \centering
   \includegraphics[width=13cm]{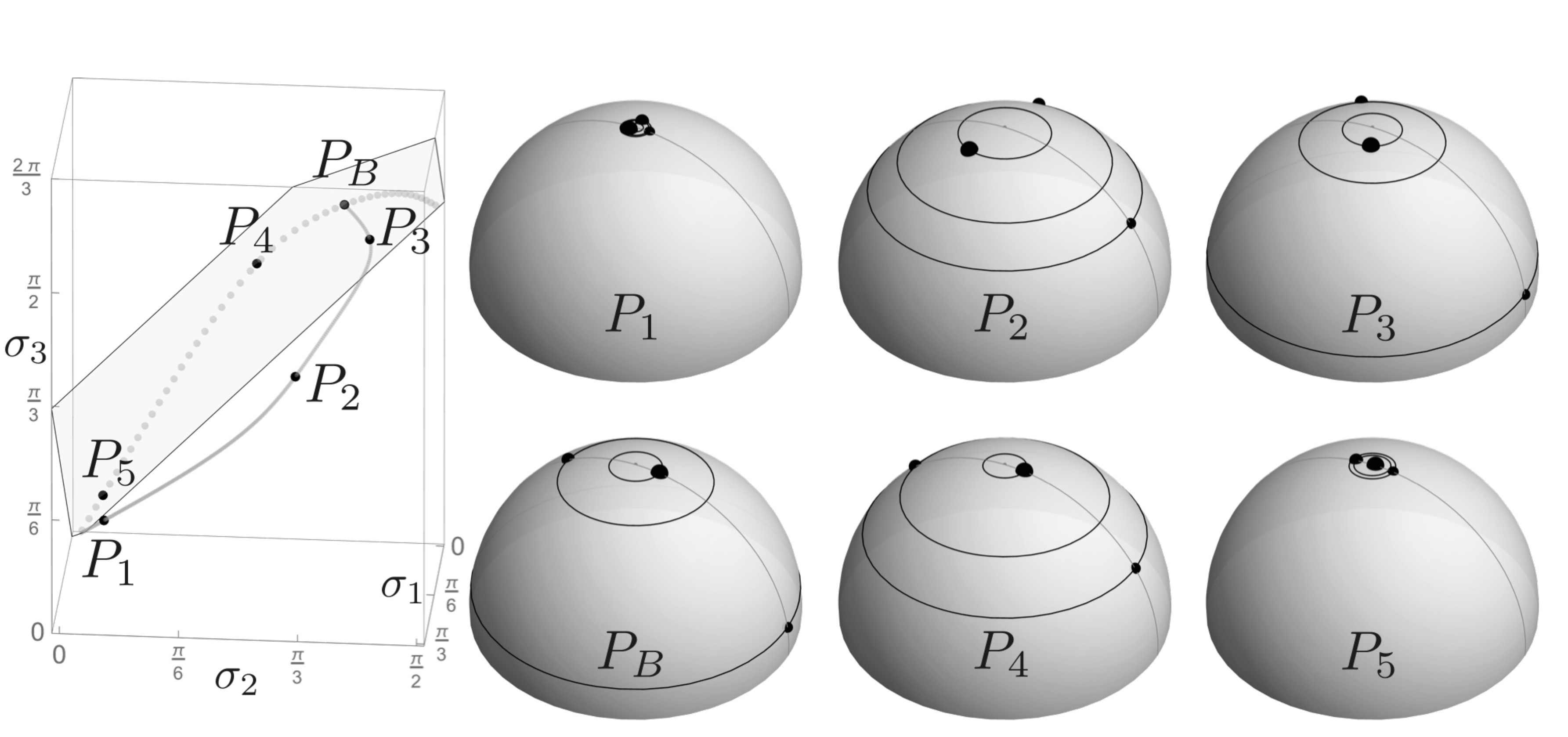}
   \caption{Two continuations from the neighbour of the
   origin in $U_\textrm{phys}$
   (left),
   i.e.
   the \olre\ continuation and 
   \oere\ continuation,
   and the corresponding configurations (right),
   for the mass ratios $m_1:m_2:m_3=1:2:4$. 
   The points $P_1,P_2, P_3$ and $P_B, P_4,P_5$
   represents $LRE$ and $ERE$ respectively.
   The point $P_B$ is the bifurcation point between
   $LRE$ and $ERE$.
   Left: The solid and dotted curve represents
   the \olre\ continuation and \oere\   
   continuation respectively.
   The grey plane is $\sigma_3=\sigma_1+\sigma_2$ plane.
   Right: The configurations
   on the northern hemisphere.
   The meridian of $\phi=0$ and $\pi$ are shown.
   The smallest and the biggest black points represents $m_1$ and $m_3$.
   The mass $m_1$ is placed on $\phi=0$.
   The circles are the orbits of the three bodies.
   }
   \label{figContinuationAndConfigurations}
\end{figure}

When we say that there are two Lagrange equilateral 
solutions in the planar three-body problem, we count the similarity class of the shapes under rotations and homotheties. Since we don't have these similarity classes on $\mathbb{S}^2$,
we have continuously many $LRE$
as shown in Figure \ref{figContinuationAndConfigurations}.
However, the counting based on similarity
has not much meaning
to compare the number of solutions on $\mathbb{S}^2$
and Euclidean $\mathbb{R}^2$.

It is better to count the number of $RE$
on the basis of continuity instead of similarity.
Because the similar shape $r_{ij}=\lambda a_{ij}$
with fixed $a_{ij}$ and scaling factor $\lambda$
is an one dimensional continuation,
the continuation is a natural extension of similarity.
For the continuation,
we have one $LRE$ in the space $r_{ij}$, and
two in the configuration space counting two orientations.
By the same counting,
we have at most seven continuations in $U_\textrm{phys}$,
and $2\times2\times 7=28$ continuations for configurations.
Where, the first factor $2$ counts the orientations,
and the second $2$ counts whether the placement is in the northern or southern hemisphere ($s=\pm 1$ in equation (\ref{sigmaToTheta})).

In this article we have considered only positive masses. Some authors have studied the case for two positive masses and a third massless particle, called the restricted three body problem on $\mathbb{S}^2$,
see for instance \cite{Kilin, Mtz}. 
An interesting question is try to extend our results to the restricted problem.  In this paper we have analyzed just the restricted isosceles $LRE$ on the sphere, that is, we consider $m_1=m_2>0$ and $m_3=0$ (see  
subsection \ref{isoscelesLREforRestrictedProblem}). We showed that our results cover this particular case. We pointing out that in this case, there are values of $\sigma_3$ not allowed 
for having $LRE$, size shape dependence is interesting. However we are far to have a complete analysis of the general restricted three body problem on the sphere. It will be part of another paper. 

Another important 
question is about the stability of the relative equilibria that we found in this work. To tackle this problem we 
need to use geometric mechanics techniques as for instance in \cite{Bolsinov1}, we will do this in a future work. Last but not least, we point out that by using our techniques, we can also study the relative equilibria for the vortex problem on the sphere as in \cite{Bolsinov2}.

\subsection*{Acknowledgements}
The authors of this paper thank to the anonymous reviewer for his/her comments and suggestions, which help us to improve the manuscript. The second author (EPC) has been partially supported 
by Asociaci\'on Mexicana de Cultura A.C. and Conacyt-M\'exico Project A1S10112.

\end{document}